\documentclass[english]{amsart}

\usepackage{enumerate}
\usepackage{amssymb}
\usepackage{array}
\usepackage{graphicx}
\usepackage{mathrsfs}
\usepackage{multicol}
\usepackage[a4paper,margin=2.7cm]{geometry}
\usepackage{mathtools}
\usepackage{ mathdots }
\usepackage{color}
\usepackage[dvipsnames,svgnames,table]{xcolor}
\usepackage{comment}
\usepackage[normalem]{ulem}
\usepackage{pgf,tikz,pgfplots}
\usepackage{tikz-cd}
\usetikzlibrary{arrows}
\pgfplotsset{compat=1.15}

\usepackage[curve,all]{xy}
\xyoption{matrix}
 \xyoption{curve}
 \xyoption{color}
 \xyoption{line}
\xyoption{arc}

\usepackage[shortlabels]{enumitem}
\setlist[enumerate]{leftmargin=7mm,topsep=0pt,itemsep=-1ex,partopsep=1ex,parsep=1ex,label=\rm{(\roman*)}}
\setlist[itemize]{leftmargin=5mm,topsep=0pt,itemsep=-1ex,partopsep=1ex,parsep=1ex,label=\raisebox{0.25ex}{\tiny$\bullet$}}

\usepackage[backref, colorlinks, linktocpage, citecolor = blue, linkcolor = blue]{hyperref}
\usepackage{cleveref}

\theoremstyle{plain}
\newtheorem{theorem}{Theorem}[section]

\newtheorem*{theoremaux}{Theorem \theoremauxnum}
\gdef\theoremauxnum{1}

\newtheorem*{main-theorem}{Main Theorem}
\newtheorem{proposition}[theorem]{Proposition}
\newtheorem*{propositionaux}{Proposition \propositionauxnum}
\gdef\propositionauxnum{1}

\newtheorem{lemma}[theorem]{Lemma}
\newtheorem*{lemmaaux}{Lemma \lemmaauxnum}
\gdef\lemmaauxnum{1}

\newtheorem{corollary}[theorem]{Corollary}

\newtheorem*{key-problem}{Key Problem}
\newtheorem*{theorem-intro}{Theorem}
\newtheorem*{proposition-intro}{Proposition}
\newtheorem*{corollary-intro}{Corollary}

\theoremstyle{definition}
\newtheorem{definition}[theorem]{Definition}

\newtheorem{example}[theorem]{Example}

\theoremstyle{remark}
\newtheorem{remark}[theorem]{Remark}

\makeatletter
\@namedef{subjclassname@2020}{%
  \textup{2020} Mathematics Subject Classification}
\makeatother
\newcommand{\incl}[1][r]{\ar@<-0.2pc>@{^(-}[#1] \ar@<+0.2pc>@{-}[#1]}
\newcommand{\hs}{\kern 0.8pt}

\renewcommand{\H}{{\mathrm{H}}}

\renewcommand{\P}{\mathbb{P}}

\newcommand{\Loc}{\mathrm{Loc}}

\newcommand{\Hom}{\mathrm{Hom}}

\renewcommand{\div}{\mathrm{div}}
\newcommand{\E}{\mathcal{E}}

\renewcommand{\L}{\mathcal{L}}

\renewcommand{\H}{\mathrm{H}}
\newcommand{\Supp}{\mathrm{Supp}}
\newcommand{\Spec}{\mathrm{Spec}}
\newcommand{\Stab}{\mathrm{Stab}}

\newcommand{\Q}{\mathbb{Q}}

\renewcommand{\H}{\mathrm{H}}

\newcommand{\A}{\mathbb{A}}

\newcommand{\D}{\mathfrak{D}}

\newcommand{\N}{\mathbb{N}}

\newcommand{\Z}{\mathbb{Z}}

\newcommand{\Gm}{\mathbb{G}_\mathrm{m}}

\renewcommand{\O}{\mathcal{O}}

\newcommand{\PPDiv}{\mathrm{PPDiv}}
\newcommand{\CaDiv}{\mathrm{CaDiv}}
\newcommand{\Div}{\mathrm{Div}}
\newcommand{\Sd}{\mathfrak{S}}
\newcommand{\Ed}{\mathfrak{E}}

\DeclareMathOperator{\Aut}{Aut}

\DeclareMathOperator{\PGL}{PGL}

\DeclareMathOperator{\Pic}{Pic}

\DeclareMathOperator{\Bir}{Bir}

\title[Classification of equivariantly normal curves via Altmann--Hausen--S\"u\ss{} theory]{Classification of equivariantly normal curves\\ via Altmann--Hausen--S\"u\ss{} theory}
\date{Version of \today}

\author{Gary Martínez-Núñez}
\address{Univ. Lille, CNRS, UMR 8524 - Laboratoire Paul Painlev\'e, F-59000 Lille, France}
\email{gary-antonio.martinez-nunez@univ-lille.fr}

\author{Ronan Terpereau}
\address{Univ. Lille, CNRS, UMR 8524 - Laboratoire Paul Painlev\'e, F-59000 Lille, France}
\email{ronan.terpereau@univ-lille.fr}

\begin{document}

\begin{abstract}
Let the ground field be perfect of positive characteristic.
Using Altmann--Hausen--S\"u\ss{} theory, we obtain a combinatorial classification of equivariantly normal curves with prescribed quotient in both the affine and projective settings.
As a consequence, we derive an explicit upper bound on the number of isomorphism classes of equivariantly normal curves over a fixed base curve with prescribed branch locus.
Furthermore, assuming that the ground field is algebraically closed, we determine, for a fixed cardinality of the branch locus, precisely when the set of isomorphism classes of equivariantly normal projective curves with prescribed quotient is finite and when it is infinite.
\end{abstract}

\subjclass[2020]{Primary 14G17, 14L30, 14L15, 14H37; Secondary 14E07, 14H45}
%14L30   Group actions on varieties or schemes (quotients)
%14L15   Group schemes
%14H37   Automorphisms of curves
%14G17   Positive characteristic ground fields in algebraic geometry

%14E07   Birational automorphisms, Cremona group and generalizations
%14H45   Special algebraic curves and curves of low genus

\keywords{
equivariantly normal curves,
finite group schemes,
diagonalizable group schemes,
positive characteristic,
torsors,
Altmann--Hausen theory,
$T$-varieties,
complexity-one torus actions,
birational automorphisms,
Cremona group
}

\maketitle

\tableofcontents
\section{Introduction}

\subsection{Context and motivation} 
The starting point of this work is the following observation. Let $C$ be a smooth projective curve over a perfect field $k$ of characteristic $p>0$. If $G$ is a smooth finite subgroup scheme of $\Bir(C)$, then $G$ is conjugate to a subgroup scheme of $\Aut(C)$. For instance, if $G= \mu_d$ with $\gcd(p,d)=1$ is a subgroup scheme of $\Bir(\P^{1})$, then $G$ is conjugate to a finite subgroup scheme of $\Aut(\P^{1}) \simeq \PGL_{2}$. 
However, if $G$ is non-smooth---for instance, a nontrivial infinitesimal subgroup scheme of $\Bir(C)$---then $G$ need not be conjugate to a subgroup scheme of $\Aut(C)$. This raises the question of how to describe, in practice, the conjugacy classes of non-smooth subgroup schemes of $\Bir(C)$.

A possible answer was given by Brion in \cite{Bri24a} with the notion of an \emph{equivariantly normal} curve.
A $G$-curve $X$ is \emph{$G$-normal} if every finite birational morphism of $G$-curves $f \colon X' \to X$ is an isomorphism.
We refer to \cite[Section~4]{Bri24a} for further background on this notion.
It turns out that the classification of finite subgroup schemes of $\Bir(C)$ reduces to the classification of isomorphism classes of equivariantly normal projective curves birational to $C$; this follows essentially from \cite[Corollary~4.4]{Bri24a}, see also Proposition~\ref{prop: first reduction} for a precise statement.

In this article, following a suggestion of Brion (see \cite[Remark~2.5.3]{Bri26}), we consider the case where $G = \mu_d$, the subgroup of the multiplicative group $\Gm$ defined by the equation $t^d = 1$, and apply Altmann--Hausen--S\"u\ss{} theory to obtain an explicit combinatorial description of the $G$-isomorphism classes of $G$-normal curves with prescribed quotient.
Note that, from the Altmann--Hausen--S\"u\ss{} perspective, it is natural to consider equivariantly normal curves not within a fixed birational class, but rather with a prescribed quotient, which in our situation is a smooth curve.
Let us also mention that this is the perspective adopted in \cite[\S 3.3]{Bri26}, where $G$-normal varieties with a prescribed quotient are studied.

Note that toric varieties were originally developed in the context of the study of the Cremona group (see \cite{Dem70}). This viewpoint establishes a connection between the Cremona group and the broader theory of varieties endowed with an effective torus action, albeit an indirect one. In this regard, the Altmann--Hausen--S\"u\ss{} theory offers a number of tools for investigating $\Bir(X)$ for arbitrary algebraic varieties $X$, and the present work may be regarded as a further step in that direction.

\subsection{Main results and outline of the paper} \label{sec: intro main results}
Let $G \subset T=\Gm$ be a finite subgroup scheme and let $\widetilde{C}$ be a given smooth (projective or affine) algebraic curve.
In Section~\ref{sec: reductions} we start by showing that the classification of $G$-isomorphism classes of $G$-normal curves $C$ with quotient the smooth curve $C/G \simeq \widetilde{C}$ reduces to the classification of certain $T$-surfaces.

\begin{proposition-intro}[{Proposition~\ref{prop: second reduction}, see also \cite[Remark~2.5.3]{Bri26}}] 
Let $T$, $G$, and $\widetilde{C}$ as above. 
Then there is a one-to-one correspondence between
\[  \hspace{-4mm}
\mathcal{E}:=\left\{
\begin{array}{c}
\text{$G$-isomorphism classes of} \\
\text{$G$-normal curves with quotient $\widetilde{C}$}
\end{array}
\right\}
\;\overset{1:1}{\longleftrightarrow}\;
\mathcal{T}:=
\left\{ \begin{array}{c} \text{$T$-isomorphism classes of triples $(S, \Phi, \pi)$}\\ \text{where $S$ is a smooth $T$-surface and} \\ \begin{tikzcd}[row sep=1em, column sep=2em] & S \arrow[dl, two heads, "\text{$T$-eq.}"', "\Phi" near end] \arrow[dr, two heads, "\text{$/T$}"', "\pi" near end, swap] & \\ T/G & & \widetilde{C} \end{tikzcd}\\ \text{with $\Phi^{-1}(\{1\})$ is geometrically integral} \end{array} \right\}.
\]
\end{proposition-intro}

We then use Altmann--Hausen--S\"u\ss{} theory to describe the right-hand side above combinatorially.
More precisely, we replace a $G$-normal curve $C$, with quotient $C/G \simeq  \widetilde{C}$ by the associated $\Gm$-surface $C^{\#}:=\Gm \times^G C$, encode it by a proper polyhedral divisor or a divisorial fan over the curve $\widetilde{C}$, and express the condition that the morphism $C^{\#} \to T/G$ has geometrically integral fiber, namely $C$, in purely combinatorial terms.

Section~\ref{sec: AHS theory reminder} contains a brief overview of the theory of $T$-varieties, where $T$ denotes an algebraic torus.
This theory, originally developed by Altmann, Hausen, and S\"u\ss{} in terms of \emph{proper polyhedral divisors}~\cite{AH06} and \emph{divisorial fans}~\cite{AHS08} over algebraically closed fields of characteristic zero, has recently been extended to arbitrary fields by the first-named author (see~\cite{GaryAH,GaryAHS}).

\smallskip

In Section~\ref{section: affine G-curves}, we use Altmann--Hausen theory to give a combinatorial description of the set $\mathcal{T}$ in the affine setting. Our main result in this direction is the following.

\begin{theorem-intro}[Theorem~\ref{th: affine refinement}]
Let $C_0$ be a smooth affine curve, and let $G=\mu_{{d}}$ be a finite subgroup scheme of $T=\Gm$. 
There is a one-to-one correspondence
\[\E_{\mathrm{aff}}:=
\left\{
\begin{array}{c}
\text{$G$-isomorphism classes of $G$-normal}\\
\text{affine curves with quotient $C_0$}
\end{array}
\right\}
\;\longleftrightarrow\;
\mathcal{S}_{\mathrm{aff}}:=\left\{
\begin{array}{c}
\text{isomorphism classes of geometrically}\\
\text{integral $\Phi$-pairs $(\D,\alpha)$ over $C_0$}
\end{array}
\right\}.
\]
\end{theorem-intro}

We then prove the finiteness of $G$-isomorphism classes of $G$-normal affine curves over a smooth affine curve with prescribed branch locus; this is a special case of \cite[Corollary~3.3.7]{Bri26}, valid in arbitrary dimension. We furthermore provide an explicit upper bound, which is the original part of the statement.

\begin{proposition-intro}[Proposition~\ref{prop: boundedness affine case}]
Let $C_0$ be a smooth affine curve, let $G=\mu_d$ be a finite subgroup scheme of $T=\Gm$, and assume that $k^{*}=(k^{*})^{d}$.
Then there are only finitely many $G$-isomorphism classes of $G$-normal affine curves over $C_0$ with branch locus $Z$. Moreover, their number is bounded by
\[
N:=
\left|(k[C_{0}\setminus Z]^{*}/k[C_{0}]^{*})/
(k[C_{0}\setminus Z]^{*}/k[C_{0}]^{*})^{d}\right|
\cdot
\left|k[C_0]^{*}/(k[C_0]^{*})^{d}\right|.
\]
If, in addition, $k[C_0]$ is a UFD, then one obtains the simpler bound
\[
N':=(d-1)^n\cdot
\left|k[C_0]^{*}/(k[C_0]^{*})^{d}\right|,
\]
where $n$ is the cardinality of $|Z|$.
\end{proposition-intro}

We also explain how to derive equations for the associated normal affine $T$-surface $S=C^{\#}$ (and thus for $C$) from the combinatorial data, and illustrate this procedure with an example (see Section~\ref{sec: Explicit construction of XD from a pp-divisor and examples}).

\smallskip

In Section~\ref{sec: projective section}, we treat the projective case in a similar spirit. Using Altmann--Hausen--S\"u\ss{} theory, we obtain a combinatorial description of the set $\mathcal{T}$ in the projective setting. Our main result in this direction is the following.
 
\begin{theorem-intro}[Theorem~\ref{th: main result AHS descritpion}]\label{theorem main intro divisorial}
Let $\overline{C}$ be a smooth projective curve, and let $G = \mu_{{d}}$ be a finite subgroup scheme of $T=\Gm$. 
There is a one-to-one correspondence
\[ \mathcal{E}:=
\left\{
\begin{array}{c}
\text{$G$-isomorphism classes of $G$-normal}\\
\text{projective curves with quotient $\overline{C}$}
\end{array}
\right\}
\;\longleftrightarrow\;  \mathcal{S}:=
\left\{
\begin{array}{c}
\text{isomorphism classes of geometrically}\\
\text{integral $\Phi$-pairs $(\Sd,\beta)$  over $\overline{C}$}
\end{array}
\right\}.
\] 
\end{theorem-intro}

As in the affine setting, we then establish in the projective setting the finiteness of isomorphism classes of equivariantly normal projective curves over a smooth projective curve with prescribed branch locus. As before, the finiteness statement is a special case of \cite[Corollary~3.3.7]{Bri26}, valid in arbitrary dimension, whereas the explicit upper bound is new.

\begin{proposition-intro}[Proposition~\ref{proposition: finite number fixed branch locus}]
Let $\overline{C}$ be a smooth projective curve, let $G=\mu_d$ be a finite subgroup scheme of $T=\Gm$, and assume that $k^{*}=(k^{*})^{d}$. Then there are only finitely many $G$-isomorphism classes of $G$-normal projective curves over $\overline{C}$ with branch locus $Z$.
Moreover, their number is bounded by
\small
\[
N:=
\min_{c\in\overline{C}}
\left|
(k[\overline{C}\setminus(\{c\}\cup Z)]^{*}/k[\overline{C}\setminus\{c\}]^{*})
\big/
(k[\overline{C}\setminus(\{c\}\cup Z)]^{*}/k[\overline{C}\setminus\{c\}]^{*})^{d}
\right|
\cdot
\left|
k[\overline{C}\setminus\{c\}]^{*}/
(k[\overline{C}\setminus\{c\}]^{*})^{d}
\right|.
\]
\normalsize
If, in addition, there exists a closed point $c\in\overline{C}$ such that
$C_{0}:=\overline{C}\setminus\{c\}$ satisfies that $k[C_{0}]$ is a UFD, then one obtains the simpler bound
\[
N':=(d-1)^{n-1}\cdot
\left|
k[C_{0}]^{*}/(k[C_{0}]^{*})^{d}
\right|,
\]
where $n$ is the cardinality of $|Z \cap C_0|$.
\end{proposition-intro}

Assuming now that the ground field $k$ is algebraically closed, we investigate when the sets 
\begin{equation} \label{eq: def of En}
\E_n:=
\left\{
\begin{array}{c}
\text{$G$-isomorphism classes of $G$-normal projective curves with quotient $\overline{C}$}\\
\text{such that $C\to\overline{C}$ is a $G$-torsor outside a branch locus of cardinal $n$}
\end{array}
\right\}.
\end{equation}
are finite or infinite.

\begin{theorem-intro}[Theorem~\ref{theorem: finiteness of En}]
Assume that the ground field $k$ is algebraically closed.
Let $G=\mu_{{d}}$ be a finite subgroup scheme of $T=\Gm$ with ${{d}} \geq 2$, and denote by $\varphi$ the Euler's totient function.
Let $\overline{C}$ be a smooth projective curve of genus $g$. Then:
\begin{enumerate}
    \item If $g = 0$ and $n \in \{ 0,1\}$, then $\E_n$ is empty.
     \item    If $g=0$ and $n\in\{2,3\}$, then
$|\E_3| < (d-1)^{\,2}$ and 
$|\E_{2}|=\varphi(d)$.
    \item  If $g=0$ and $n\ge4$, then $\E_n$ is infinite.
    \item  If $g\ge1$ and $n=0$, then $  |\E_n|\le {{d}}^{2g}-1$.
    \item  If $g\ge1$ and $n=1$, then $\E_n$ is empty. 
    \item  If $g=1$ and $n\ge 2$, then $\E_n$ is infinite.
    \item  If $g\ge 2$ and $n\ge 2g+2$, then $\E_n$ is infinite.
\end{enumerate}
\end{theorem-intro}

\begin{remark}
The assumption that the ground field is algebraically closed in Theorem~\ref{theorem: finiteness of En} is made primarily for convenience. We expect that the theorem remains valid over arbitrary perfect fields, although we do not address this generality here.
\end{remark}

Finally, we conclude our article with a complete description of the set $\E_2$ over $\P^1$ (see Section~\ref{sec: complete description of E2 and E3}).

\subsection{Two general remarks}

\begin{remark}
The same strategy could, in principle, describe the conjugacy classes of diagonalizable groups in $\Bir(X)$ for an algebraic variety $X$ of dimension $n \ge 1$. The main obstacle is that this reduces to studying $T$-varieties of higher complexity, whose classification is far less tractable and less amenable to explicit computation than the complexity-one varieties encountered for $\Bir(C)$, with $C$ a curve, and $G = \mu_d$.
\end{remark}

\begin{remark}
The Altmann--Hausen--S\"u\ss{} theory extends to arbitrary fields (see
\cite{GaryAH,GaryAHS}), as does the theory of $G$-normal varieties.
However, the proof of Proposition~\ref{proposition: integrality of the fiber}
relies on a result of Mac Lane \cite[Theorem~2]{Ma39}, which is valid only over
perfect fields. Moreover,
Propositions~\ref{prop: boundedness affine case} and~\ref{proposition: finite number fixed branch locus} no longer hold over imperfect fields
(see \cite[\S~3.3]{Bri26}). For these reasons, we restrict our attention to
perfect ground fields.
\end{remark}

\subsection{Notation and setting}\label{sec:notations}
Throughout, let $k$ be a perfect field of characteristic $p>0$.
A \emph{variety} is a separated, geometrically integral scheme of finite type over $k$.
A \emph{curve} (respectively, a \emph{surface}) is a variety of dimension 1 (respectively, 2).
Recall that if $C$ is an algebraic curve, then there exists a unique smooth projective curve $\overline{C}$ in the birational class of $C$, up to isomorphism. 

We say that a group scheme $G$ is \emph{diagonalizable} if it is isomorphic to a closed subgroup scheme of an algebraic torus $\Gm^{l}$. 
If $C$ is a curve and $G$ is an infinitesimal diagonalizable subgroup scheme of $\Bir(C)$, then necessarily
$G = \mu_{q}$ for some $q=p^s$ (see \cite[Lemma~3.7 and Remark~3.8]{Bri24a}). 
In the present article, we will focus on the case where $G = \mu_d$ with $d \in  \mathbb{N}_{\geq 1}$ arbitrary. In other words, in the infinitesimal case, a diagonalizable group scheme acting on a curve is necessarily isomorphic to $\mu_{p^s}$, so we restrict our attention to finite diagonalizable group schemes of the form $\mu_d$. Note however that, for any smooth diagonalizable finite group scheme $D$, there exists a smooth projective curve $C$ such  that $\Aut(C)\simeq D$; see \cite{MV83}.  

Recall that if $G=\mu_d$ acts faithfully on a curve $C$, then the action is generically free.
We denote by $C_{\mathrm{fr}}$ the free locus of the action, namely the set of points with trivial stabilizer.
Equivalently, $C_{\mathrm{fr}}$ is the largest dense open subset of $C$ such that the quotient morphism
$C_{\mathrm{fr}} \longrightarrow C_{\mathrm{fr}}/G$
is a $G$-torsor. The complement of $C_{\mathrm{fr}}/G$ in $C/G$ is the (reduced) branch locus of the quotient morphism $C \to C/G$.

\bigskip

\noindent \textbf{Acknowledgments.}
We are grateful to Michel Brion and Giancarlo Lucchini-Arteche for helpful discussions related to this work and  for their feedback on a preliminary version of the article.
The authors are grateful to R.~van Dobben de Bruyn for pointing out the argument based on Bertini's theorem in response to the author's question on MathOverflow, which led to the proof of Theorem~\ref{theorem: finiteness of En}~\ref{theorem: finiteness of En part g>1 n>2}.

The authors acknowledge the support of the CDP C2EMPI, as well as the French State under the France-2030 programme, the University of Lille, the Initiative of Excellence of the University of Lille, the European Metropolis of Lille for their funding and support of the R-CDP-24-004-C2EMPI project.

\section{Reduction to the classification of certain \texorpdfstring{$\Gm$-surfaces}{Gm-surfaces}}\label{sec: reductions}
Let $X$ be a variety over the perfect field $k$. The contravariant functor
\[
\mathfrak{Bir}_X \colon (\text{$k$-schemes}) \to (\text{groups})
\]
associates to each $k$-scheme $S$ the group of $S$-families of birational transformations of $X$ (see \cite[\S~2]{Dem70} or \cite[\S~2.1]{BF13} for details). 
This group functor is not representable in general; however, it contains representable subgroup functors. For instance, if $X$ is complete, the automorphism functor $\mathfrak{Aut}_X$ is representable by a $k$-group scheme locally of finite type (see \cite{MO67}).  
A group scheme $G$ representing a subgroup functor $\underline{G} \hookrightarrow \mathfrak{Bir}_X$ is called a \emph{subgroup scheme} of $\Bir(X)$.
Note that, although the abstract groups $\mathfrak{Bir}_{C}(k)$ and $\mathfrak{Aut}_C(k)$ are isomorphic, for a given projective curve $C$, the group functors $\mathfrak{Bir}_{C}$ and $\mathfrak{Aut}_C$ are not.

\smallskip

We can recall the following observation, which will not be used later in this article, but which motivated our study of equivariantly normal curves.

\begin{proposition}[{consequence of \cite[Corollary~4.4]{Bri24a}}] \label{prop: first reduction}
Let $\overline{C}$ be a smooth projective curve over $k$.
Let $G$ be a finite group scheme. Then there is a one-to-one correspondence between 
\[
\left\{
\begin{array}{c}
\text{conjugacy classes of subgroup schemes}\\
\text{of $\Bir(\overline{C})$ isomorphic to $G$}
\end{array}
\right\}
\;\overset{1:1}{\longleftrightarrow}\;
\left(
\left\{
\begin{array}{c}
\text{$G$-isomorphism classes of $G$-normal}\\
\text{projective curves birational to $\overline{C}$}
\end{array}
\right\}
\big/ \Aut(G)
\right).
\]
\end{proposition}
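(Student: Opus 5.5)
The plan is to build explicit maps in both directions and to take \cite[Corollary~4.4]{Bri24a} as the main input. Given a subgroup scheme $\iota\colon G \hookrightarrow \Bir(\overline{C})$, Corollary~4.4 of \cite{Bri24a} supplies a $G$-normal projective curve $X$ with a faithful $G$-action and a birational map $X \dashrightarrow \overline{C}$. Conjugating by this map carries $\iota$ to the action of $G$ on $X$. The statement also says that $X$ is unique up to $G$-isomorphism; I will recall this precisely. Every $G$-curve has a $G$-normalization, meaning a finite birational $G$-morphism from a $G$-normal curve, and this is initial among such. Uniqueness then follows because two $G$-normal models that are $G$-birational are related by $G$-equivariant finite birational maps to a common model, and $G$-normality forces these maps to be isomorphisms. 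The object attached to $\iota$ is the class of $X$.

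Next I check how conjugation behaves. Let $\phi \in \Bir(\overline{C})(k)$, and replace $\iota$ by $\phi\,\iota\,\phi^{-1}$. The composite $X \dashrightarrow \overline{C} \xrightarrow{\phi} \overline{C}$ is another birational model realizing the new embedding with the same $G$-action on $X$, so the class of $X$ does not change. Now let $a \in \Aut(G)$, and consider an isomorphic copy of the subgroup, i.e.\ the same image in $\Bir(\overline{C})$ parametrized through $\iota\circ a$. This twists the action on $X$ by $a$. Hence, on the side of abstract subgroup schemes (images isomorphic to $G$), the class is well defined only up to $\Aut(G)$. This is exactly why the quotient by $\Aut(G)$ appears on the right.

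For the inverse map, start from a $G$-normal projective curve $X$ birational to $\overline{C}$ with a faithful action, and fix a birational map $\psi\colon X\dashrightarrow \overline{C}$. The action $G\times X\to X$ transported by $\psi$ gives a family of birational transformations of $\overline{C}$ parametrized by $G$, hence a homomorphism $\underline{G}\to\mathfrak{Bir}_{\overline{C}}$. This homomorphism is a monomorphism because the action is faithful; I will use that a faithful action of a finite group scheme on a curve is generically free. Changing $\psi$ changes the result by conjugation by an element of $\Bir(\overline{C})(k)$. A $G$-isomorphism of curves, up to twisting by $\Aut(G)$, yields conjugate subgroups. The two constructions are mutually inverse by the uniqueness of the $G$-normal model.

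The main obstacle is the non-smooth case. There the $k$-points of $G$ do not capture $G$, so a subgroup scheme of $\Bir$ is only a subfunctor, and one needs a regularization that realizes it as a genuine action on some model. That is precisely the content of \cite[Corollary~4.4]{Bri24a}, which I will quote rather than reprove. The remaining work is to verify that conjugacy inside the group functor only involves $k$-points $\phi$. For an isomorphism class of subgroup schemes over the perfect field $k$, I expect this to be immediate from the statement of the cited result.
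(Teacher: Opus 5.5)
Your proposal is correct and follows essentially the same route as the paper: identify the subgroup with $G$, invoke existence and uniqueness of the $G$-normal projective model from \cite[Corollary~4.4]{Bri24a}, check invariance under conjugation and under changing the identification (which produces the $\Aut(G)$-quotient), and invert by transporting the action along a birational map $X \dashrightarrow \overline{C}$. One small correction to your aside on uniqueness: $G$-normality constrains finite birational maps \emph{into} a curve, so take the common model to be the closure of the graph in $X_1\times X_2$, which maps finitely and birationally onto each $X_i$ and hence is isomorphic to both; if instead you map both $X_i$ to a common $Y$, your universal property only shows that each $X_i$ is the $G$-normalization of $Y$, not that the maps $X_i\to Y$ are isomorphisms. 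Since you quote uniqueness from Brion anyway, this does not affect the argument.
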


\begin{proof}
Let $H$ be an algebraic subgroup scheme of $\Bir(\overline{C})$ isomorphic to $G$,
and choose an isomorphism $\alpha \colon G \simeq H$. By
\cite[Corollary~4.4]{Bri24a}, there exists an $H$-normal projective curve $C$
together with an $H$-equivariant birational map
$C \dashrightarrow \overline{C}$,
and $C$ is unique up to $H$-isomorphism. Via $\alpha$, this gives a
$G$-normal projective curve, unique up to $G$-isomorphism.

Changing the isomorphism $\alpha \colon G \simeq H$ replaces the resulting
$G$-action on $C$ by its twist by an automorphism of $G$. Hence the
construction is well-defined only modulo the natural action of $\Aut(G)$.

If $H'$ is conjugate to $H$ in $\Bir(\overline{C})$, then the corresponding
models are equivariantly birational. By the uniqueness part of
\cite[Corollary~4.4]{Bri24a}, they are isomorphic as $G$-curves.
Thus the construction depends only on the conjugacy class of $H$.

Conversely, let $C$ be a $G$-normal projective curve birational to
$\overline{C}$, and choose a birational map $\nu \colon C \dashrightarrow \overline{C}$.
The $G$-action on $C$ gives an algebraic subgroup scheme $\nu G \nu^{-1} \subset \Bir(\overline{C})$
isomorphic to $G$. Replacing $\nu$ by another birational map conjugates this
subgroup inside $\Bir(\overline{C})$, and replacing $C$ by a $G$-isomorphic
curve does not change the resulting conjugacy class. Finally, twisting the
$G$-action by an automorphism of $G$ does not change the image subgroup
scheme in $\Bir(\overline{C})$. This gives the inverse construction.
\end{proof}

As mentioned in the introduction, it turns out that, from the Altmann--Hausen--S\"u\ss{} perspective, it is natural to consider equivariantly normal curves not within a fixed birational class, but rather with a prescribed quotient (in our situation, a smooth curve). We now explain how the study of equivariantly normal curves with a prescribed quotient reduces to the study of certain $\Gm$-surfaces; these surfaces will subsequently be described in combinatorial terms through the Altmann--Hausen--S\"u\ss{} theory in Sections~\ref{section: affine G-curves} and~\ref{sec: projective section}.

\smallskip

Let $T = \Gm$ and $G \subset T$ be a finite subgroup scheme. 
Let $\widetilde{C}$ be a smooth curve (projective or affine).
Let $C$ be a $G$-normal curve with quotient the smooth curve $C/G \simeq \widetilde{C}$.
Define the \emph{contracted product}
\begin{equation}\label{eq: def of C hash}
C^{\#} := T \times^G C = (T \times C)/G,
\end{equation}
where $G$ acts on $T \times C$ via $g \cdot (t,c) = (t g^{-1}, g \cdot c)$.
Then $C^{\#}$ is a $T$-surface equipped with a $T$-equivariant morphism
$\Phi\colon C^{\#} \to T/G \simeq \Gm$ (induced by the first projection) and a geometric quotient
$\pi \colon C^{\#} \to C^{\#}/T \simeq C/G\simeq \widetilde{C}$ (induced by the second projection).

\begin{proposition}[{see \cite[Remark~2.5.3]{Bri26}}] \label{prop: second reduction}
We keep the previous notation. 
Then there is a one-to-one correspondence between
\[  \hspace{-4mm}
\mathcal{E}:=\left\{
\begin{array}{c}
\text{$G$-isomorphism classes of} \\
\text{$G$-normal curves with quotient $\widetilde{C}$}
\end{array}
\right\}
\;\overset{1:1}{\longleftrightarrow}\;
\mathcal{T}:=
\left\{ \begin{array}{c} \text{$T$-isomorphism classes of triples $(S, \Phi, \pi)$}\\ \text{where $S$ is a smooth $T$-surface and} \\ \begin{tikzcd}[row sep=1em, column sep=2em] & S \arrow[dl, two heads, "\text{$T$-eq.}"', "\Phi" near end] \arrow[dr, two heads, "\text{$/T$}"', "\pi" near end, swap] & \\ T/G & & \widetilde{C} \end{tikzcd}\\ \text{with $\Phi^{-1}(\{1\})$ is geometrically integral} \end{array} \right\}.
\]
Here, two triples $(S,\Phi,\pi)$ and $(S',\Phi',\pi')$ are said to be
\emph{$T$-isomorphic} if there exist a $T$-equivariant isomorphism
$f\colon S\to S'$, a $T$-equivariant automorphism
$g\colon T/G\to T/G$, and an automorphism
$h\colon \widetilde{C}\to\widetilde{C}$ such that
$\Phi'\circ f = g\circ \Phi$ and $\pi'\circ f = h\circ \pi$.

\end{proposition}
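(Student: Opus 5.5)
We construct the two maps explicitly and check that they are mutually inverse. In one direction, $C\mapsto (C^{\#},\Phi,\pi)$ is defined via the contracted product \eqref{eq: def of C hash}. In the other, a triple $(S,\Phi,\pi)$ is sent to $C:=\Phi^{-1}(\{1\})$ with the induced action of $G=\ker(T\to T/G)$.

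\textbf{From curves to triples.} Let $C$ be a $G$-normal curve with $C/G\simeq\widetilde{C}$.
\begin{itemize}
\item Since $G$ is finite and $C$ is quasi-projective, the quotient $(T\times C)/G$ exists as a variety. The projection $T\times C\to T$ is $G$-equivariant for the free translation action of $G$ on $T$, so $\Phi\colon C^{\#}\to T/G$ is a $T$-equivariant fibration. Its fibre over $1$ is $C$ with its $G$-action.
\item $C^{\#}/T\simeq C/G\simeq\widetilde{C}$, and this quotient is geometric.
\item Smoothness of $C^{\#}$ is the key point. I would invoke Brion's characterization that a $G$-curve is $G$-normal iff $T\times^G C$ is normal (cf.~\cite[Remark~2.5.3]{Bri26} and \cite[Section~4]{Bri24a}). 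The argument: a finite birational $G$-morphism $C'\to C$ induces a finite birational $T$-morphism $C'^{\#}\to C^{\#}$. Conversely, the normalization of $C^{\#}$ is $T$-stable, hence of the form $C'^{\#}$, with $C'$ the fibre of the composite to $T/G$. So $G$-normality of $C$ is equivalent to normality of $C^{\#}$, and a normal surface of this kind is smooth.
\end{itemize}
I expect the smoothness step to be the main obstacle. A normal surface could have isolated singular points, and these would have to be excluded. My first attempt: $\Phi$ is $T$-equivariant onto $T/G\simeq\Gm$ with $T$ acting transitively on the base, so $C^{\#}$ is étale-locally (fppf-locally) a product $\Gm\times C$ over $T/G$. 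Since $T\to T/G$ is only fppf and not étale when $p\mid d$, smoothness of $C^{\#}$ cannot simply be read off from $C$. Instead I would use that a normal surface with a $\Gm$-action whose orbits are all one-dimensional has at worst cyclic quotient singularities along finitely many orbits. Then I would show that no such point occurs, because an isolated singular point would have to be $T$-fixed, and $T$ has no fixed points on $C^{\#}$. If this is insufficient, I would instead read the statement with "normal" in place of "smooth", as in Brion's remark.

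\textbf{From triples to curves.} Given $(S,\Phi,\pi)$, set $C=\Phi^{-1}(1)$. It is geometrically integral by hypothesis, hence a curve, and it is stable under $G$. The map $T\times C\to S$, $(t,c)\mapsto tc$, is $G$-invariant. It descends to a $T$-morphism $T\times^G C\to S$ over $T/G$, which is an isomorphism: this can be checked fppf-locally over $T/G$ after pulling back along $T\to T/G$, where both sides become $T\times C$. Then $C/G=S/T\simeq\widetilde{C}$, and $C$ is $G$-normal by the equivalence above, since $S$ is normal.

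\textbf{Bijectivity.} Both constructions are functorial, and the two composites are identities by the isomorphisms $\Phi^{-1}(1)\simeq C$ and $T\times^G\Phi^{-1}(1)\simeq S$. It remains to match the equivalence relations.
\begin{itemize}
\item A $G$-isomorphism $C\simeq C'$ induces $f=\mathrm{id}_T\times^G(\cdot)$ with $g=\mathrm{id}$ and $h$ the induced map on quotients.
\item Conversely, given $(f,g,h)$, the automorphism $g$ is translation by some $\lambda\in T/G$. Lifting $\lambda$ fppf-locally and translating by a lift shows that $f$ maps $\Phi^{-1}(1)$ isomorphically onto $\Phi'^{-1}(1)$, up to a $T$-translation that preserves the class. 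When $\lambda$ has no rational lift, I would use instead that the fibres over $1$ and $\lambda$ become $G$-isomorphic after base change; this is the step where some care with twisted forms is required.
\end{itemize}
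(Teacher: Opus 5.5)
Your two constructions, and the proofs that they land in $\mathcal{T}$ and $\mathcal{E}$, are essentially the paper's. The real problem is the last step, matching the equivalence relations: there is a genuine gap, and your proposed patch cannot close it.

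\textbf{The gap in matching the equivalence relations.} Let $g$ be translation by $\lambda\in k^{*}\setminus(k^{*})^{d}$. Then $\Phi'^{-1}(\{1\})\simeq(\lambda\Phi)^{-1}(\{1\})=\Phi^{-1}(\{\lambda^{-1}\})$, which is the twist of $C$ by the $G$-torsor $\{t^{d}=\lambda^{-1}\}$. Knowing that it becomes $G$-isomorphic to $C$ after base change only says it is a $k$-form of $C$. But $\mathcal{E}$ records $G$-isomorphism over $k$, and such forms are in general different.

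Here is a concrete example. Take $k=\mathbb{F}_q$ with $q$ odd and $d=2$. Let $C=E$ be an ordinary elliptic curve $y^{2}=x^{3}+ax+b$ with $\mu_2$ acting by $y\mapsto -y$; it is smooth, hence $G$-normal, and $E/G\simeq\P^{1}$. Let $\lambda$ be a non-square.
\begin{itemize}
\item The triples $(E^{\#},\Phi,\pi)$ and $(E^{\#},\lambda\Phi,\pi)$ are $T$-isomorphic: take $f=\mathrm{id}$, $g$ the translation by $\lambda$, and $h=\mathrm{id}$.
\item Their fibres over $1$ are $E$ and its quadratic twist. These have $q+1-a_q\neq q+1+a_q$ rational points, so they are not isomorphic.
\end{itemize}
So with $g$ arbitrary, the map $\mathcal{E}\to\mathcal{T}$ is not injective. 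The correspondence as stated fails over such fields, and no argument can complete this step.

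\textbf{What can be proved.} You can prove the statement with $g=\mathrm{id}$. A translation by $\mu^{d}$ with $\mu\in k^{*}$ is absorbed into $f$ by composing with $\mu^{-1}$, which is exactly your lifting argument. Alternatively, add the hypothesis $k^{*}=(k^{*})^{d}$; since $k$ is perfect, only the prime-to-$p$ part of $d$ matters.

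The paper's proof is silent on the equivalence relations. Its later use of the correspondence is of this restricted form: Definition~\ref{def: pairs isomorphic} requires $\alpha_1/\psi^{*}(\alpha_2)=\mathfrak{f}(d)$ on the nose, i.e.\ $g=\mathrm{id}$. Its counting results also assume $k^{*}=(k^{*})^{d}$ or $k$ algebraically closed.

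\textbf{The other steps.} The paper simply cites:
\begin{itemize}
\item \cite[Proposition~4.17]{Bri24a} for smoothness of $C^{\#}$;
\item \cite[Lemma~D.6]{Bri26} for $S\simeq T\times^{G}\Phi^{-1}(\{1\})$;
\item \cite[Corollary~2.1.5]{Bri26} for $G$-normality of the fibre.
\end{itemize}
Your re-derivations of these work after two small repairs.

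First, drop the sentence about ``cyclic quotient singularities along finitely many orbits''. It is wrong for a normal surface, whose singular locus is finite. Drop also the fallback to ``normal''. Your fixed-point argument alone suffices: $\mathrm{Sing}(C^{\#})$ is finite, closed and $T$-stable, so it consists of $T$-fixed points because $T$ is connected. There are none, because all stabilizers are finite (Lemma~\ref{lem:stab}).

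Second, in the normalization argument you must check that the fibre $C'$ over $1$ of the normalization $\widetilde{S}$ is a curve, so that $G$-normality applies. This holds for two reasons:
\begin{itemize}
\item $T\times C'\to\widetilde{S}$ is a $G$-torsor, hence flat over an integral scheme, so $C'$ has no embedded or isolated points;
\item $C'\to C$ is finite and an isomorphism over a dense open subset.
\end{itemize}
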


\begin{proof}
Since $C$ is $G$-normal, the surface $S:=C^{\#}$, defined by \eqref{eq: def of C hash}, is smooth by \cite[Proposition~4.17]{Bri24a}. Moreover, the scheme-theoretic fiber over the base point of the $T$-equivariant morphism $\Phi\colon C^{\#}\to T/G$, induced by the first projection, is $G$-isomorphic to the curve $C$, and thus is geometrically integral.

Conversely, let $S$ be a smooth $T$-surface equipped with a $T$-equivariant morphism $\Phi\colon S \to T/G$ whose scheme-theoretic fiber at the base point is the $G$-curve $C:=\Phi^{-1}(\{1\})$. 
Then there is a $T$-equivariant isomorphism
$S \simeq C^{\#}=T\times^{G}C$
(see \cite[Lemma~D.6]{Bri26}). Since $S$ is smooth, it follows from \cite[Corollary~2.1.5]{Bri26} that $C$ is a $G$-normal curve with quotient $\widetilde{C}$. 
\end{proof}

\begin{remark}
We cannot omit the condition in the definition of $\mathcal{T}$ that the fiber of $\Phi$ over the base point be geometrically integral. For example, take $G=\mu_q$, with $q = p^s>1$,  and consider the non-reduced one-dimensional $G$-scheme $Z=\widetilde{C}\times G$. 
Then $S=(T\times Z)/G \simeq T \times\widetilde{C}$
is a smooth $T$-surface, while the fiber of $\Phi$ over the base point is $Z$, which is non-reduced.
\end{remark}

To conclude this section, let us relate the $G$-orbits of the curve $C$ to the $T$-orbits of the surface $C^{\#}$ defined in~\eqref{eq: def of C hash}. There is a commutative diagram
\begin{equation}\label{eq: commutative diagram}
\xymatrixcolsep{1pc}\xymatrixrowsep{1.1pc}\xymatrix{
& T \times C \ar[dl]_{p_{G}} \ar[dr]^{p_{2}} & \\
C^{\#} \ar[d] \ar[drr] & & C \ar[d]^{\pi_{G}} \\
C^{\#}/ T \ar[rr]^{\simeq} & &  C/G
}
\end{equation}
where $p_G$ is the quotient morphism and $p_2$ is the projection onto the second factor.
The group $T\times G$ acts on $T\times C$ via
\[
(t,g)\cdot(t',c)=(tt'g^{-1},g\cdot c),
\]
and the morphism $p_G$ is $T$-equivariant.
For any section $s\colon C\to T\times C$ of $p_2$, define
\[
s^{\#}:=p_G\circ s\colon C\to C^{\#}.
\]
Let $x\in C$. For every $g\in\Stab_G(x)$ we have
\[
p_2((1,g)\cdot s(x))=x \implies p_G((1,g)\cdot s(x))=s^{\#}(x).
\]
Since $p_G$ is $T$-equivariant and $g\in T$, we obtain
\[
g\cdot s^{\#}(x)
= g\cdot p_G(s(x))
= p_G((1,g)\cdot s(x))
= s^{\#}(x).
\]
Thus the inclusion $G\subset T$ induces a group homomorphism
\[
\varrho_s:\Stab_G(x)\to\Stab_T(s^{\#}(x)).
\]

\begin{lemma}\label{lem:stab}
Let $X$ be a $G$-normal variety and $X^{\#}$ its associated normal $T$-variety. Then:
\begin{enumerate}
\item\label{item i: prop:stab} For every $x \in X$ and section $s\colon X\to T\times X$, the homomorphism
\(\varrho_s\colon \Stab_G(x)\to \Stab_T(s^{\#}(x))\) is an isomorphism.
\item\label{item iii: prop:stab} For any section $s$, we have $X^{\#} = T \cdot s^{\#}(X)$ and $X^{\#}_{\mathrm{fr}} = T \cdot s^{\#}(X_{\mathrm{fr}})$.
In particular, we have $\pi_G(X_{\mathrm{fr}}) = \pi(X^{\#}_{\mathrm{fr}})$.
\end{enumerate}
\end{lemma}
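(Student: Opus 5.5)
Throughout, a section $s\colon X\to T\times X$ of $p_2$ has the form $s(x)=(\sigma(x),x)$ for a morphism $\sigma\colon X\to T$. The basic tool is the twisted description of the $T$-action on $X^{\#}=T\times^G X$: a $T$-valued point $t$ fixes $p_G(t',x)$ exactly when there is $g\in G$ with
\[
(tt',x)=(t'g^{-1},g\cdot x).
\]
All statements will be checked functorially, on $R$-points for $k$-algebras $R$, because $G$ and the stabilizers may be non-reduced. The plan is to describe both stabilizer group schemes in terms of this equation.

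\textbf{Part (i).} Fix $x\in X$ (possibly after base change to its residue field) and set $t'=\sigma(x)$. Let $t\in \Stab_T(s^{\#}(x))(R)$. Since $p_G$ is a $G$-torsor, $T\times X\to X^{\#}$ is fppf locally trivial. So, fppf locally on $R$, there is $g\in G(R)$ with
\[
(t\sigma(x),x)=(\sigma(x)g^{-1},g\cdot x).
\]
Comparing second coordinates gives $g\in\Stab_G(x)$. Comparing first coordinates gives $t=g^{-1}$, using that $T$ is commutative. Uniqueness of $g$ follows from the freeness of the $G$-action on $T\times X$, so these local choices glue. Hence the map $t\mapsto g^{-1}$ is a well-defined homomorphism $\Stab_T(s^{\#}(x))\to\Stab_G(x)$.

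It remains to compare this map with $\varrho_s$. The computation preceding the lemma shows that $\varrho_s$ is $g\mapsto g$, using $(1,g)\cdot s(x)=(\sigma(x)g^{-1},g\cdot x)$ and $T$-equivariance. So the two maps are mutually inverse, up to the inversion, which is an automorphism of the commutative group scheme $\Stab_G(x)$. Injectivity of $\varrho_s$ is immediate because it is the restriction of $G\subset T$. Thus the real content is surjectivity, which is exactly the argument above.

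\textbf{Part (ii).} For the equality $X^{\#}=T\cdot s^{\#}(X)$, I would note that the morphism
\[
T\times X\to X^{\#},\qquad (t,x)\mapsto t\cdot s^{\#}(x)=p_G(t\sigma(x),x)
\]
is the composite of the automorphism $(t,x)\mapsto(t\sigma(x),x)$ of $T\times X$ with $p_G$. It is therefore surjective, even fppf surjective, since $p_G$ is. For the free loci, $X^{\#}_{\mathrm{fr}}$ is $T$-stable, and by (i) $\Stab_T(t\cdot s^{\#}(x))=\Stab_T(s^{\#}(x))\simeq\Stab_G(x)$. Hence $t\cdot s^{\#}(x)$ is free if and only if $x\in X_{\mathrm{fr}}$, which gives $X^{\#}_{\mathrm{fr}}=T\cdot s^{\#}(X_{\mathrm{fr}})$.

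\textbf{Consequence.} Apply $\pi$ and use the commutative diagram \eqref{eq: commutative diagram}, which shows $\pi\circ s^{\#}=\pi_G$ under the identification $X^{\#}/T\simeq X/G$. Since $\pi$ is $T$-invariant, this yields $\pi(X^{\#}_{\mathrm{fr}})=\pi(s^{\#}(X_{\mathrm{fr}}))=\pi_G(X_{\mathrm{fr}})$.

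The main obstacle is the scheme-theoretic care in (i): stabilizers are non-reduced group schemes, so the comparison must be done on $R$-points using fppf descent for the torsor $p_G$. $G$-normality is not really needed beyond ensuring that $X^{\#}$ is the normal $T$-variety in question.
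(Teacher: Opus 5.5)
Your proposal is correct and follows essentially the paper's argument. In (i) you use the torsor $p_G$ to rewrite $t\cdot s^{\#}(x)=s^{\#}(x)$ as the existence of $g\in\Stab_G(x)$ with $t=g^{\pm 1}$; the sign only depends on which of the two lifts you translate by $g$. In (ii) you conclude from the surjectivity of $(t,x)\mapsto t\cdot s^{\#}(x)$ together with (i) and the constancy of stabilizers along $T$-orbits. Your remark ``up to the inversion'' is unnecessary: since $t=g^{-1}$ already lies in $\Stab_G(x)$ and $\varrho_s(g^{-1})=t$, your map is literally the inverse of $\varrho_s$.
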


\begin{proof}
All arguments are carried out functorially on $R$-points, where $R$ is an arbitrary
$k$-algebra.

\noindent \ref{item i: prop:stab}: Let $t\in \Stab_T(s^{\#}(x))(R)$. Then
$t \cdot s^{\#}(x) = s^{\#}(x)$, so there exists $g\in G(R)$ with
$s(x)=(t,g)\cdot s(x)$. Writing $s(x)=(y_x,x)$ gives
$(y_x,x) = (g^{-1} t y_x, g x)$,
so $g\in \Stab_G(x)(R)$ and $t=g$. Thus, $\varrho_s(R)$ is an isomorphism.
Since $R$ is arbitrary, $\varrho_s$ is an isomorphism of group schemes.

\noindent \ref{item iii: prop:stab}: By definition,
$T\times X = (T\times G)\cdot s(X)$ and the diagram
\eqref{eq: commutative diagram} commutes, so
$X^{\#}=T\cdot s^{\#}(X)$.
For the free locus,
$T\cdot s^{\#}(X_{\mathrm{fr}})\subset X^{\#}_{\mathrm{fr}}$
by~\ref{item i: prop:stab}. Conversely, if
$x^{\#}\in X^{\#}_{\mathrm{fr}}(R)$ and
$z\in (T\times X)(R)$ with $p_G(z)=x^{\#}$, there exists
$t\in T(R)$ such that
$(1,t)\cdot s(p_2(z))=z$, hence
$t\cdot s^{\#}(p_2(z)) = x^{\#}$.
Part~\ref{item i: prop:stab} implies
$p_2(z)\in X_{\mathrm{fr}}(R)$, so
$x^{\#}\in (T\cdot s^{\#}(X_{\mathrm{fr}}))(R)$.
\end{proof}

\section{Brief reminder on Altmann--Hausen--S\"u\ss{} Theory}\label{sec: AHS theory reminder}
Normal toric varieties admit a well-known combinatorial description in terms of \emph{cones} and \emph{fans}, depending on whether the variety is affine or not (see, e.g.,~\cite{Ful93}). 
These notions were generalized to arbitrary normal varieties endowed with an algebraic torus action by Altmann, Hausen, and S\"u\ss{} in terms of \emph{proper polyhedral divisors} \cite{AH06} and \emph{divisorial fans} \cite{AHS08}, over an algebraically closed field of characteristic zero. 
This theory was recently extended to arbitrary fields by the first-named author (see \cite{GaryAH,GaryAHS}). We now briefly recall this theory for the action of a split torus over an arbitrary field.

\subsection{Proper polyhedral divisors} 
Let $N$ be a lattice and let $M:=\Hom_{\Z}(N,\Z)$ be its dual lattice. 
We denote by $N_{\Q}$ and $M_{\Q}$ the $\Q$-vector spaces $N\otimes_{\Z}\Q$ and $M\otimes_{\Z}\Q$, respectively. 
Recall that $N$ and $M$ are equipped with a perfect pairing $\langle\cdot,\cdot\rangle\colon M\times N\to \Z$, which canonically extends to a pairing between $M_{\Q}$ and $N_{\Q}$.

A \emph{cone} is a subset $\omega\subset N_{\Q}$ that is polyhedral, i.e.\ generated by finitely many elements of $N_{\Q}$. A cone is said \emph{strictly convex} if it contains no lines. 
The \emph{dual cone} $\omega^{\vee}$ is defined by
\[
\omega^{\vee}:=\{ m\in M_{\Q} \mid \langle m,v\rangle \ge 0 \text{ for all } v\in \omega \}.
\]

For any two subsets $A$ and $B$ of $N_{\Q}$, the \emph{Minkowski sum} of $A$ and $B$ is defined by
\[
A+B:=\{a+b \mid a\in A \text{ and } b\in B\}.
\]

A \emph{polytope} $\Pi$ in $N_{\Q}$ is the convex hull of finitely many points of $N_{\Q}$. 
A \emph{polyhedron} $\Delta$ in $N_{\Q}$ is the Minkowski sum of a polytope $\Pi\subset N_{\Q}$ and a cone $\omega\subset N_{\Q}$; the cone $\omega$ is called the \emph{tail cone} of $\Delta$. 
For a cone $\omega\subset N_{\Q}$, the set of all polyhedra having $\omega$ as tail cone, denoted by $\mathrm{Pol}(N_{\Q},\omega)$, has a semigroup structure induced by the Minkowski sum, with $\omega$ as neutral element.

Let $Y$ be a normal \emph{semiprojective} variety over $k$, i.e.\ the affinization map
$r_Y:Y\to \Spec(\H^0(Y,\mathscr O_Y))$ is projective and $\H^0(Y,\mathscr O_Y)$ is a finitely generated $k$-algebra. 
We denote by $\Div(Y)$ and $\CaDiv(Y)$ the groups of Weil and Cartier divisors, respectively. 
Since $Y$ is normal, there is a natural embedding $\CaDiv(Y)\subset \Div(Y)$.

A \emph{polyhedral divisor} on $Y$ is a formal sum
\[
\D=\sum \Delta_D\otimes D,
\]
where the sum runs over the prime divisors of $Y$, all polyhedra $\Delta_D$ share a common tail cone $\omega\subset N_{\Q}$, and $\Delta_D=\omega$ for all but finitely many $D$. 
The cone $\omega$ is called the \emph{tail} of $\D$ and is denoted by $\omega_{\D}$.

A \emph{proper polyhedral divisor} (pp-divisor) is a polyhedral divisor $\D$ with strictly convex tail cone $\omega$ such that:
\begin{enumerate}
\item for every $m\in\omega^{\vee}\cap M$, the \emph{evaluation}
\[
\D(m):=\sum \min_{v\in\Delta_D}\langle m,v\rangle\,D
\]
is a semiample $\Q$-Cartier divisor on $Y$;
\item for every $m\in\mathrm{relint}(\omega^{\vee})\cap M$, i.e. $m$ does not belong to a proper face, the divisor $\D(m)$ is big.
\end{enumerate}

Given a lattice $N$, a strictly convex cone $\omega\subset N_{\Q}$, and a normal semiprojective variety $Y$, we denote by $\PPDiv_{\Q}(Y,\omega)$ the set of pp-divisors on $Y$ with tail cone $\omega$.

Let $\D\in\PPDiv_{\Q}(Y,\omega)$ and $\D'\in\PPDiv_{\Q}(Y',\omega')$. 
A \emph{morphism} of pp-divisors is a triple $(\psi,F,\mathfrak f):\D'\to\D$, where $\psi:Y'\to Y$ is a proper birational morphism, $F:N'\to N$ is a lattice morphism, and $\mathfrak f=\sum n_i\otimes f_i\in N\otimes k(Y')^*$ is a \emph{plurifunction}, satisfying
\[\sum \Delta_{D}\otimes \psi^{*}(D)=:\psi^{*}\D\leq F_{*}\D'+\mathrm{div}(\mathfrak{f}):=\sum F(\Delta_{D'}')\otimes D'+\sum \{n_{i}\}\otimes\mathrm{div}(f_{i}),\]
i.e.\ whenever $\psi^*(D)=D'$ we have the inclusion
\[
F(\Delta_{D'}')+\sum_{\mathrm{div}(f_i)\cap D\neq\varnothing}\{n_i\}\subset \Delta_{D}.
\]

It is an \emph{isomorphism} if $\psi$ and $F$ are isomorphisms, in which case equality holds. 
Morphisms of the form $(\psi, \mathrm{id}_N, \mathfrak{f})$ are called \emph{$T$-equivariant morphisms of pp-divisors} and will be denoted simply by $(\psi, \mathfrak{f})$.

These morphisms define a category $\mathfrak{PPDiv}$ (see \cite[\S~8]{AH06} or \cite[\S~3]{GaryAH}). 
For $\D\in\mathfrak{PPDiv}$ and $m\in\omega^{\vee}\cap M$, define the \emph{round down} of $\D(m)$ by
\[
\lfloor\D(m)\rfloor
:=
\sum
\left\lfloor
\min_{v\in\Delta_D}\langle m,v\rangle
\right\rfloor D,
\]
which is a Cartier divisor, and set
\[
\H^0(Y,\mathscr O_Y(\D(m))) :=
\H^0(Y,\mathscr O_Y(\lfloor\D(m)\rfloor)).
\]

For $m,m'\in\omega^{\vee}\cap M$ there are natural multiplication maps
\[
\H^0(Y,\mathscr O_Y(\D(m)))\otimes
\H^0(Y,\mathscr O_Y(\D(m')))
\to
\H^0(Y,\mathscr O_Y(\D(m+m'))),
\]
yielding the $M$-graded $k$-algebra
\[
A[Y,\D]:=
\bigoplus_{m\in\omega^{\vee}\cap M}
\H^0(Y,\mathscr O_Y(\D(m))).
\]

By \cite[Theorem 3.1]{AH06} and \cite[Proposition 4.16]{GaryAH}, the scheme over $k$
\[
X(\D):=\Spec(A[Y,\D])
\]
is a normal affine variety with an effective action of the torus $T:=\Spec(k[M])$, and every normal affine $T$-variety arises in this way. 
Moreover, $\D\mapsto X(\D)$ yields an equivalence between the category $\mathfrak{PPDiv}$ and the category of normal affine $T$-varieties with dominant equivariant morphisms.

\subsection{Divisorial fans} \label{sec: divisorial fans}
In order to describe arbitrary normal $T$-varieties, it is convenient to allow pp-divisors with empty coefficients. 
Thus we extend $\mathrm{Pol}(N_{\Q},\omega)$ by including $\varnothing$, with the conventions
$\varnothing+\Delta=\varnothing$ and $0\cdot\varnothing=\omega$.
If a pp-divisor $\D$ has empty coefficients, we assume that 
$\bigcup_{\Delta_D=\varnothing}\Supp(D)$ is the support of an effective semiample divisor. 
The \emph{locus} of a pp-divisor $\D$ on $Y$ is then defined by
\[
\Loc(\D):=
Y\setminus
\bigcup_{\Delta_D=\varnothing}\Supp(D).
\]
Accordingly, for such $\D$ we define
\[
A[Y,\D]:=
A[\Loc(\D),\D|_{\Loc(\D)}],
\]
where for any open subset $V\subset Y$
\[
\D|_{V}:=
\sum_{\Supp(D)\cap V\neq\varnothing}
\Delta_D\otimes D|_{V}.
\]
Allowing empty coefficients makes it possible, in this combinatorial setting, to define faces of pp-divisors and to construct \emph{divisorial fans}, which we define below.

Let $Y$ be a normal semiprojective variety and let $N$ be a lattice. 
Let $\D=\sum\Delta_D\otimes D$ and $\D'=\sum\Delta'_D\otimes D$
be two pp-divisors over $Y$ with tail cones $\omega$ and $\omega'$. 
Assume that $\Delta'_D\subset\Delta_D$ for every $D$ (hence $\omega'\subset\omega$). 
Then the pair $(\mathrm{id}_Y,\mathfrak 1):\D'\to\D$ defines a morphism of pp-divisors and induces a $T$-equivariant morphism
\[
X(\D')\to X(\D).
\]
We say that $\D'$ is a \emph{face} of $\D$, written $\D'\preceq\D$, if this morphism is an open $T$-equivariant embedding. 
In particular, $\omega'$ is a face of $\omega$.

Given a normal semiprojective variety $Y$ and a lattice $N$, a \emph{divisorial fan} $(\Sd,Y)$ is a finite collection of pp-divisors on $Y$ such that for any $\D,\D'\in\Sd$:
\begin{enumerate}
\item\label{definition: divisorial fan intersection} the intersection
\[
\D\cap\D'=
\sum(\Delta_D\cap\Delta'_D)\otimes D
\]
belongs to $\Sd$;
\item $\D\cap\D'$ is a face of both $\D$ and $\D'$.
\end{enumerate}
The set of tail cones
\[
\Sigma_{\Sd}:=\{\omega_{\D}\mid \D\in\Sd\}
\]
forms a fan, called the \emph{tail fan}.
We denote by
\[
\mathrm{FDiv}_{\Q}(Y,\Sigma)
\]
the set of all divisorial fans $(\Sd,Y)$ whose tail fan is $\Sigma$.
We also define:
\begin{itemize}
\item the \emph{locus} of $(\Sd,Y)$ by
\[
\Loc(\Sd,Y):=\bigcup_{\D\in\Sd}\Loc(\D),
\]
\item the \emph{support} of $(\Sd,Y)$ by
\[
\Supp(\Sd,Y):=
\{D\in\Div(Y)\mid \exists\, \D=\textstyle\sum\Delta_{E}\otimes E\in\Sd
\text{ such that }\Delta_{D}\not\in\{\omega_{\D},\varnothing\}\}.
\]
\end{itemize}

Unlike the toric case, a pp-divisor may have infinitely many faces (even in complexity one), so divisorial fans are not required to contain all faces.

Let $(\Sd,Y)$ be a divisorial fan. 
For $\D,\Ed\in\Sd$, the relations $\Ed\cap\D\preceq\Ed$ and $\Ed\cap\D\preceq\D$ yield $T$-equivariant open embeddings
\[
\eta_{\D\Ed}:X(\Ed\cap\D)\to X(\Ed),
\qquad
\eta_{\Ed\D}:X(\Ed\cap\D)\to X(\D).
\]
These maps provide gluing data for the affine $T$-varieties $X(\D)$, and we obtain
\[
X(\Sd,Y)=\varinjlim_{\D\in\Sd}X(\D).
\]
Note that, in general, $X(\Sd,Y)$ is only a prevariety and need not be separated, in contrast with the toric case. Altmann, Hausen, and S\"u\ss{} proved in \cite{AHS08} that every normal $T$-variety arises from a divisorial fan over a normal semiprojective variety $Y$, that is, $X \simeq X(\Sd,Y)$ for some $(\Sd,Y)$. This result was extended to arbitrary fields by the first-named author in \cite{GaryAHS}.

It is worth mentioning that a complexity-one normal $T$-variety may admit several non-isomorphic divisorial fans. However, if $(\Sd,C)$ and $(\Sd',C')$ describe the same $T$-variety, then $C \simeq C'$ (see \cite[Corollary~4.10]{GaryAHS}).

\section{Equivariantly normal affine curves and their pp-divisors}\label{section: affine G-curves} 
In this section, assuming that the ground field $k$ is perfect, we describe pp-divisors arising from equivariantly normal affine curves. More precisely, let $C_0$ be a smooth affine curve and let $G=\mu_d \subset T=\Gm$ be a finite subgroup scheme with $d \geq 2$.
By Proposition~\ref{prop: second reduction}, there is a one-to-one correspondence
\[  {\small 
 \hspace{-8mm}
\E_{\mathrm{aff}}:=\left\{
\begin{array}{c}
\text{$G$-isomorphism classes of $G$-normal} \\
\text{affine curves with quotient $C_0$}
\end{array}
\right\}
\;\overset{1:1}{\longleftrightarrow}\;
{\mathcal{T}}_{\mathrm{aff}}:=
\left\{ \begin{array}{c} \text{$T$-isomorphism classes of triples $(S, \Phi, \pi)$}\\ \text{where $S$ is a smooth affine smooth $T$-surface and} \\ \begin{tikzcd}[row sep=1em, column sep=2em] & S \arrow[dl, two heads, "\text{$T$-eq.}"', "\Phi" near end] \arrow[dr, two heads, "\text{$/T$}"', "\pi" near end, swap] & \\ T/G & & C_0\end{tikzcd}\\ \text{with $\Phi^{-1}(\{1\})$ is geometrically integral} \end{array} \right\}.
}\]
We first use Altmann–Hausen theory to replace $\mathcal{T}_{\mathrm{aff}}$ by a purely combinatorial set $\mathcal{S}_{\mathrm{aff}}$ described in terms of pp-divisors (see Theorem~\ref{th: affine refinement}).
Then we show that there are only finitely many $G$-isomorphism classes of $G$-normal curves over $C_0$ with a fixed branch locus and give an explicit upper bound (Proposition~\ref{prop: boundedness affine case}). 
Finally, in Section~\ref{sec: Explicit construction of XD from a pp-divisor and examples}, we explain how to obtain equations for the affine $T$-surface $S$ from the combinatorial datum, and illustrate this with an example.

\subsection{Combinatorial description in the affine case}\label{sec: Combinatorial description in the affine case}
Let $U$ be a $G$-normal affine curve with quotient $C_0$, and let $U^{\#}$ be the corresponding normal affine $T$-surface constructed as in~\eqref{eq: def of C hash}.  
Since $\pi:U^{\#}\to C_0$ is a geometric quotient, there exists a pp-divisor $\D\in \PPDiv_{\Q}(C_0,\omega)$ with $U^{\#} \simeq X(\D)$ as $T$-varieties.  
It remains to understand the $T$-equivariant morphism $\Phi:U^{\#}\to T/G$ in terms of $\D$. 

Note that the exact sequence
\[
\xymatrixcolsep{1pc}\xymatrix{
1 \ar[r] & G \ar[r] & T \ar[r] & T/G \ar[r] & 1
}
\]
induces, at the level of character lattices, the exact sequence
\[
\xymatrixcolsep{1pc}\xymatrix{
0 \ar[r] & M^{G} \ar[r] & M \ar[r] & M/M^{G} \ar[r] & 0.
}
\]
Since $\Phi$ is surjective and $T$-equivariant, it induces an injective $M$-graded morphism of $k$-algebras
\begin{equation}\label{eq: def of Phistar}
\Phi^*\colon k[M^G] \simeq  k[T/G]  \longrightarrow  k[U^{\#}] \simeq A[C_0,\D],
\end{equation}
preserving degrees. 
The canonical inclusion $M^{G}\to M$ allows us to fix canonical generators $x$ and $x^{-1}$ of $M^{G}$ of degree $d$.  
Then we can associate
\[
\alpha_{\Phi} := \Phi^*(x) \in A[C_0,\D]^* \cap \H^0(C_0, \mathscr{O}_{C_0}(\D(d))) , \quad
\alpha_{\Phi}^{-1} := \Phi^*(x^{-1}) \in \H^0(C_0, \mathscr{O}_{C_0}(\D(-d))).
\]
Moreover, given that $\alpha_{\Phi}\alpha_{\Phi}^{-1}=\Phi^{*}(1)=1$, we have $\alpha_{\Phi}\in A[C_0,\D]^{*}\cap \H^{0}(C_0,\mathscr{O}_{C_0}(\D(d)))$.
Conversely, any $\alpha \in A[C_0,\D]^{*}\cap \H^{0}(C_0,\mathscr{O}_{C_0}(\D(d)))$ defines a $T$-equivariant morphism $\Phi_{\alpha}: X(\D)\to T/G$ via $\Phi_{\alpha}^*(x) := \alpha$. Notice that the existence of the morphism $\Phi$ forces that $A[C_0,\D]^{*}\cap \H^{0}(C_0,\mathscr{O}_{C_0}(\D(d)))$ and $A[C_0,\D]^{*}\cap \H^{0}(C_0,\mathscr{O}_{C_0}(\D(-d)))$ are nontrivial. Hence $\omega^{\vee}\cap M_{\Q}$, and therefore $\omega=\{0\}$. We denote by $G\text{-}\PPDiv_{\Q}(C_{0},\{0\})$ the subset of $\PPDiv_{\Q}(C_{0},\{0\})$ of those pp-divisors satisfying $A[C_0,\D]^{*}\cap \H^{0}(C_0,\mathscr{O}_{C_0}(\D(d)))$ and $A[C_0,\D]^{*}\cap \H^{0}(C_0,\mathscr{O}_{C_0}(\D(-d)))$ are nontrivial.

\begin{definition}\label{def: Phi structure}
For $\D \in G\text{-}\PPDiv_{\Q}(C_0,\{0\})$, an element $\alpha \in A[C_0,\D]^{*}\cap \H^{0}(C_0,\mathscr{O}_{C_0}(\D(d)))$ is called a $\Phi$-\emph{structure on $\D$}. A pair $(\D,\alpha)$, where $\alpha$ is a $\Phi$-structure, is called a $\Phi$-\emph{pair} over $C_0$.
\end{definition}

\begin{proposition}\label{prop: invertible element} 
We keep the previous notation.
\begin{enumerate}
\item\label{item: lemma invertible element i}  Let $U$ be a $G$-normal affine curve with quotient $C_0$. Then there exists a $\Phi$-pair $(\D,\alpha)$ over $C_0$ encoding the data
$T/G \overset{\Phi}{\twoheadleftarrow} U^{\#} \overset{\pi}{\twoheadrightarrow} C_0$.
\item\label{item: lemma invertible element ii} Conversely, given a $\Phi$-pair $(\D,\alpha)$ over $C_0$ such that $\Phi_{\alpha}^{-1}(\{1\})$ is geometrically integral, there exists a $G$-normal affine curve $U$ with quotient $C_0$ realizing $(\D,\alpha)$ as above.
\end{enumerate}
\end{proposition}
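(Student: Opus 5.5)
For part (i), I will start from Proposition~\ref{prop: second reduction}. Given $U$, it produces the smooth affine $T$-surface $U^{\#}=T\times^G U$, the $T$-equivariant surjection $\Phi\colon U^{\#}\to T/G$, and the geometric quotient $\pi\colon U^{\#}\to C_0$. Because $U^{\#}$ is a normal affine $T$-variety whose quotient is $C_0$, the Altmann--Hausen correspondence (\cite[Theorem~3.1]{AH06}, \cite[Proposition~4.16]{GaryAH}) gives a pp-divisor $\D$ on $C_0$ with $U^{\#}\simeq X(\D)$ as $T$-varieties. The structure map $X(\D)\to C_0$ is then identified with $\pi$, and the identification is unique up to an automorphism of $C_0$.

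Next I will transport $\Phi^*$ as in \eqref{eq: def of Phistar}. Set $\alpha:=\Phi^*(x)$. Since $\Phi^*$ preserves degrees, $\alpha$ lies in $\H^0(C_0,\mathscr O_{C_0}(\D(d)))$. Since $\alpha\,\Phi^*(x^{-1})=1$, it is a unit of $A[C_0,\D]$. Both graded pieces of degree $\pm d$ therefore contain units, which forces $\pm d\in\omega^\vee$, hence $\omega^\vee=M_{\Q}$ and $\omega=\{0\}$. So $\D\in G\text{-}\PPDiv_{\Q}(C_0,\{0\})$ and $(\D,\alpha)$ is a $\Phi$-pair. By construction $\Phi_\alpha$ agrees with $\Phi$, since both are determined by the image of $x$ in the graded algebra.

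For part (ii), given $(\D,\alpha)$, I set $S:=X(\D)$. This is a normal affine $T$-surface, and $\pi\colon S\to C_0$ is a good quotient; with $\omega=\{0\}$ it is a geometric quotient, since all orbits are closed, one-dimensional, and the degree-zero part is $\H^0(C_0,\O)=k[C_0]$. I then define $\Phi_\alpha$ via $x\mapsto\alpha$ and $x^{-1}\mapsto\alpha^{-1}$. The map $\Phi_\alpha$ is $T$-equivariant because $\alpha$ is homogeneous of degree $d$. It is surjective because the image of $\Phi_\alpha$ is a nonempty $T$-stable subset of the single orbit $T/G$.

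Set $U:=\Phi_\alpha^{-1}(\{1\})$. This is a closed $G$-stable subscheme, and it is geometrically integral by hypothesis. By \cite[Lemma~D.6]{Bri26}, $S\simeq T\times^G U$ equivariantly. Hence $U$ is affine, being closed in an affine scheme, and $U/G\simeq S/T\simeq C_0$.

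The remaining point is $G$-normality. By \cite[Corollary~2.1.5]{Bri26}, this follows from smoothness of $S$, which is the step I expect to be the main obstacle. The most direct route is to show that $S$ is smooth because it is a torsor-like quotient. Since $U^{\#}\to T/G$ is a fibration, $S$ is locally isomorphic to $T$-orbits over $C_0$ with finite stabilizers. With $\omega=\{0\}$, all $T$-orbits have finite stabilizers, and the local structure of $X(\D)$ near a point over $c\in C_0$ is a cyclic quotient described by $\Delta_c=\{v_c\}$. Here a unit of degree $d$ forces $d v_c$ to be integral up to principal divisors. I would check that the existence of the unit $\alpha$ makes the local ring at each orbit regular: locally $S$ is $\Spec\big(\bigoplus_m \O_{C_0,c}\,t^{\lfloor m v_c\rfloor}\chi^m\big)$, which is the semigroup algebra of a rank-two lattice over a DVR and is therefore regular.

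Alternatively, if smoothness is not needed for the statement as phrased, I would only verify $G$-normality directly. A finite birational $G$-morphism $U'\to U$ would induce a finite birational $T$-morphism $T\times^G U'\to S$, which is an isomorphism by normality of $X(\D)$. This yields $U'\simeq U$ by taking fibers over $1$. This second argument is cleaner, and I would use it in place of the smoothness check.
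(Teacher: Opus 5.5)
Your proof is correct and follows the paper's. Part (i) is the same argument: $\alpha=\Phi^*(x)$, and units in degrees $\pm d$ force $\omega=\{0\}$. In part (ii) you take $U=\Phi_\alpha^{-1}(\{1\})$, as the paper does, but you also make explicit the $G$-normality that the paper leaves implicit through Proposition~\ref{prop: second reduction}. Your observation does this cleanly: a finite birational $G$-morphism $U'\to U$ induces a finite birational morphism $T\times^G U'\to X(\D)$, which is an isomorphism because $X(\D)$ is normal, so smoothness is never needed.

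One inaccuracy: you attribute finite stabilizers and the geometric-quotient property to $\omega=\{0\}$ alone, which is wrong, since hyperbolic fixed points can occur when $\omega=\{0\}$. Both properties actually come from the unit $\alpha$ through $\Phi_\alpha$. Neither is needed anyway, because $U/G=\Spec k[U]^G=\Spec A[C_0,\D]_0=C_0$.
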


\begin{proof}
For the first statement, the $\Phi$-pair $(\D,\alpha_{\Phi})$ works. It remains to show that $\omega=\{0\}$. This follows from the injectivity of the morphism of $M$-graded $k$-algebras $\Phi^{*}$, defined by~\eqref{eq: def of Phistar},
which implies that the homogeneous components $\H^{0}(C_0,\mathscr{O}_{C_0}(\D(d)))$ and $\H^{0}(C_0,\mathscr{O}_{C_0}(\D(-d)))$ of degrees $d$ and $-d$ are nontrivial. Hence $\omega^{\vee}=M_{\Q}$, and therefore $\omega=\{0\}$.

For the converse, let $(\D,\alpha)$ be as in the second statement. Then $\alpha\in A[C_0,\D]^{*}\cap \H^{0}(C_0,\mathscr{O}_{C_0}(\D(d)))$ defines a $T$-equivariant morphism
$\Phi_{\alpha}\colon X(\D)\longrightarrow T/G$.
The $G$-normal affine curve $U$ is then given by the fiber $\Phi_{\alpha}^{-1}(\{1\})$, which is geometrically integral by assumption. This concludes the proof.
\end{proof}

\begin{remark}\label{remark: correspondence branch locus}
The (reduced) branch locus of $C\to C_{0}$ coincides with the support of $\D$, since the points $c\in C_{0}$ for which the corresponding polyhedron is not $\{0\}$ are precisely those whose fibers under $\pi:X(\D)\to C_{0}$ have non-trivial isotropy groups. The assertion then follows from Lemma~\ref{lem:stab}.
\end{remark}

Note that two $G$-isomorphic $G$-normal curves give rise to $T$-isomorphic normal $T$-surfaces. However, the converse does not hold. Two non-$G$-isomorphic $G$-normal curves may induce the same normal $T$-surface (see Example~\ref{example: same pp-div different curves} below). 

Thus, in order to describe all the $G$-isomorphism classes of $G$-normal affine curves with quotient $C_0$, we need to consider not just the pp-divisor, but the $\Phi$-pairs $(\D,\alpha)$ as in Proposition~\ref{prop: invertible element}.

\begin{example}\label{example: same pp-div different curves}
Let us consider the curve $C_0=\Spec(k[u,v]/(u^{m}+v^{m}+1))$, where $m \geq 3$ is an odd integer prime to $\mathrm{char}(k)=p$. 
In this case $u+v\in k[C_0]^{*}$, since $u+v$ divides $u^{m}+v^{m}$. 
Let $\D\in\PPDiv_{\Q}(C_0,\{0\})$ be defined by
\[
\D:=\left[\dfrac{1}{{{d}}}\right]\otimes \div(u+1)=\left[\dfrac{1}{{{d}}}\right]\otimes \{(-1,0)\}.
\]
Its associated normal affine $T$-surface is
\[
X(\D)=\Spec\bigl(A[C_0,\D]\bigr)=\Spec\left(\frac{k[u,v,x,a,b]}{(u^{m}+v^{m}+1,\ x^{{{d}}}-(u+1)a,\ ab-1)}\right).
\]
Now consider the $\Phi$-pairs $(\D,a)$ and $(\D,(u+v)a)$. Their corresponding $G$-normal curves are 
\[
U(\D,a):=\Spec\left(\frac{k[u,v,x]}{(u^{m}+v^{m}+1,\ x^{{{d}}}-u-1)}\right)
\simeq \Spec\left(\frac{k[v,x]}{((x^{{{d}}}-1)^{m}+v^{m}+1)}\right)
\]
and
\[
U(\D,(u+v)a):=\Spec\left(\frac{k[u,v,x]}{(u^{m}+v^{m}+1,\ x^{{{d}}}(u+v)-u-1)}\right)
\simeq
\Spec\left(\frac{k[s,x]}{\left((x^{{d}}s-1)^{m}+\bigl(1-(x^{{d}}-1)s\bigr)^{m}+1\right)}\right),
\]
where \(s=u+v\), since \(u=x^{{d}}s-1\) and \(v=s-u=1-(x^{{d}}-1)s\).

\smallskip

We now prove that $C_1$ and $C_2$ are non-isomorphic.
Write $d=p^{r}e$, where $\gcd(e,p)=1$. 

We first assume that $e>1$ and show that $C_{1}$ and $C_{2}$ have different numbers of points at infinity in their smooth projective completions.

Set $z=x^{d}-1$.
Then $C_{1}$ is birational to the affine Fermat curve
$z^{m}+v^{m}+1=0$,
via the Kummer extension
$x^{d}=z+1$.
After homogenization, the projective Fermat curve
$Z^{m}+V^{m}+W^{m}=0$
meets the line at infinity $W=0$ in exactly $m$ smooth points. At each such point, the function $(Z+W)/W$ has a simple pole. Since the ramification index is $1$, the Kummer cover contributes exactly one point above each of them. Hence the smooth projective completion of $C_{1}$ has exactly $m$
points at infinity.

Similarly, setting
$z=x^{d}s-1$ and $t=1-(x^{d}-1)s$,
one obtains
$s=z+t$ and $x^{d}=\frac{z+1}{z+t}$,
so that $C_{2}$ is birational to the same Fermat curve together with the Kummer extension
$x^{d}=\frac{Z+W}{Z+T}$.
Among the $m$ points of the Fermat curve lying on $W=0$, exactly one satisfies $Z+T=0$. At the remaining $m-1$ points, the function $(Z+W)/(Z+T)$ is a nonzero constant, so the separable part of the Kummer extension contributes $e$ distinct points above each of them. Above the unique point with $Z+T=0$, there are $\gcd(e,m)$ points. Therefore the smooth projective completion of $C_{2}$ has
$(m-1)e+\gcd(e,m)$
points at infinity.

If $e>1$, then
$(m-1)e+\gcd(e,m)>m$.
Consequently, the smooth projective completions of $C_{1}$ and $C_{2}$ have different numbers of points at infinity. Since an isomorphism of affine curves extends uniquely to an isomorphism of their smooth projective completions, the curves $C_{1}$ and $C_{2}$ are not isomorphic.

In the remaining purely inseparable case $d=p^{r}$ (i.e.~$e=1$), one instead compares the singularities of the two curves. For $C_{1}$, every singularity has completed local equation analytically equivalent to
$X^{d}+Y^{m}=0$.
For $C_{2}$, introducing
$A=x^{d}s-1,\qquad B=1-(x^{d}-1)s$,
identifies $C_{2}$ with a purely inseparable cover of the Fermat curve
$A^{m}+B^{m}+1=0$.
The singularities are then governed by the ramification of the rational function
$\frac{A+1}{A+B}$,
and one obtains singularities whose completed local equations are not analytically equivalent to those occurring on $C_{1}$. Thus the curves can be distinguished by comparing the analytic types of their singular points rather than by counting points at infinity.

In all cases we conclude that the two curves $C_1$ and $C_2$ are non-isomorphic.

\end{example}

\begin{definition} \label{def: pairs isomorphic}
Two $\Phi$-pairs $(\D_{1},\alpha_{1})$ and $(\D_{2},\alpha_{2})$ are said to be \emph{isomorphic} if there exists an isomorphism $(\psi,\mathfrak{f})\colon \D_{1}\to\D_{2}$ such that
\[
\alpha_{1}/\psi^{*}(\alpha_{2})=\mathfrak{f}(d)=f^{d},\ \text{for some $f\in k[C_0]^{*}$.}
\]
\end{definition}

\begin{proposition}\label{prop: pais D alpha}
Let $(\D_{1},\alpha_{1})$ and $(\D_{2},\alpha_{2})$ be two $\Phi$-pairs over $C_0$. Then they determine $G$-isomorphic $G$-normal separated irreducible schemes of dimension~$1$ if and only if $(\D_{1},\alpha_{1})$ and $(\D_{2},\alpha_{2})$ are isomorphic. Moreover, the fiber $\Phi_{\alpha_1}^{-1}(\{1\})$ is geometrically integral if and only if $\Phi_{\alpha_2}^{-1}(\{1\})$ is geometrically integral.

In particular, if $k[C_0]^{*}=k^{*}$, then $(\D_{1},\alpha_{1})$ and $(\D_{2},\alpha_{2})$ determine $G$-isomorphic $G$-normal separated irreducible schemes of dimension~$1$ if and only if $\D_{1}$ and $\D_{2}$ are isomorphic.
\end{proposition}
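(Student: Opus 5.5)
The plan is to pass through the equivalence of categories between $\mathfrak{PPDiv}$ and normal affine $T$-varieties, and the identification $U\simeq \Phi^{-1}(\{1\})$ with $U^{\#}\simeq T\times^{G}U$.

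\emph{Reduction to triples.} By \cite[Lemma~D.6]{Bri26}, a $G$-isomorphism $U_1\simeq U_2$ of the fibers $U_i:=\Phi_{\alpha_i}^{-1}(\{1\})$ induces a $T$-isomorphism $U_1^{\#}\to U_2^{\#}$ compatible with the maps $\Phi_{\alpha_i}$. Conversely, a $T$-isomorphism $F\colon X(\D_1)\to X(\D_2)$ with $\Phi_{\alpha_2}\circ F=g\circ\Phi_{\alpha_1}$ maps $\Phi_{\alpha_1}^{-1}(\{1\})$ onto $\Phi_{\alpha_2}^{-1}(\{g(1)\})$. Translating by some $t\in T(k)$ with image $g(1)^{-1}$ in $T/G$ carries this onto $\Phi_{\alpha_2}^{-1}(\{1\})$, and such a $t$ exists since $T(k)\to (T/G)(k)$ is the $d$-th power map. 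Because $T$ is commutative, this translation is $G$-equivariant. Hence $U_1\simeq_G U_2$ if and only if there is a $T$-isomorphism $F$ with $F^{*}\Phi_{\alpha_2}^{*}(x)=c\,\Phi_{\alpha_1}^{*}(x)$ for a constant $c\in k^*$, which we may absorb into the translation. (A $T$-equivariant automorphism $g$ of $T/G\simeq\Gm$ is a translation.) The same argument shows that the fibers are isomorphic as $k$-schemes, so geometric integrality of one fiber is equivalent to that of the other; this gives the ``moreover'' part.

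\emph{Translation into pp-divisors.} By the equivalence of categories \cite[Theorem~3.1]{AH06}, \cite[Proposition~4.16]{GaryAH}, $T$-isomorphisms $X(\D_1)\to X(\D_2)$ correspond to isomorphisms $(\psi,\mathfrak f)\colon\D_1\to\D_2$. On the degree-$m$ part the comorphism is $s\mapsto \psi^{*}(s)\cdot\mathfrak f(m)$, where $\mathfrak f(m)=\prod f_i^{\langle m,n_i\rangle}$. Since $N=\Z$ here, $\mathfrak f=1\otimes f$, so $\mathfrak f(d)=f^{d}$ with $f\in k(C_0)^{*}$. The condition $F^{*}\alpha_2=\alpha_1$ then reads $\alpha_1/\psi^{*}(\alpha_2)=f^{d}$.

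It remains to show that $f\in k[C_0]^{*}$. The ratio $\alpha_1/\psi^*\alpha_2$ is a unit of $A[C_0,\D_1]$ of degree $0$, hence lies in $k[C_0]^{*}$. Thus $f^{d}\in k[C_0]^{*}$, and since $C_0$ is normal, $\div(f)=0$ and $f\in k[C_0]^{*}$. I expect this bookkeeping to be the main obstacle: making sure the plurifunction really has degree-$d$ evaluation $f^d$, and keeping track of the sign conventions for $\mathfrak f$ and the direction of $\psi$. Conversely, given such an isomorphism of $\Phi$-pairs, the associated $F$ satisfies $F^{*}\alpha_2=\alpha_1$, so it respects $\Phi$ and we recover a $G$-isomorphism of the fibers.

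\emph{The case $k[C_0]^{*}=k^{*}$.} We must show that any two $\Phi$-structures on isomorphic pp-divisors give isomorphic pairs. A $\Phi$-structure $\alpha$ is a unit of degree $d$, so $\div(\alpha)+\D(d)$ has coefficients $0$ and $\alpha$ is determined up to $k[C_0]^*=k^*$. Given an isomorphism $(\psi,\mathfrak f)$, the ratio $\alpha_1/\psi^{*}\alpha_2$ is a unit of degree $0$, that is, a constant $c\in k^{*}$. Using the freedom above to absorb constants via a translation in $T$ (equivalently, rescaling $\mathfrak f$ by a $d$-th root of $c$, which exists over $k$ after the translation argument), the pairs are isomorphic.
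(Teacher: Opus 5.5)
Your route is the paper's. Via $X(\D_i)\simeq T\times^G U_i$ and the equivalence $\D\mapsto X(\D)$, a $G$-isomorphism of the fibers amounts to an isomorphism $(\psi,\mathfrak f)\colon\D_1\to\D_2$ whose comorphism $F^*$ sends $\alpha_2$ to $\alpha_1$, i.e.\ $\alpha_1=\mathfrak f(d)\,\psi^*(\alpha_2)$ with $\mathfrak f(d)=f^d$.

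\textbf{The step $f\in k[C_0]^{*}$ fails.} The paper's proof makes the identical slip. The function $\psi^*(\alpha_2)$ is a section of $\psi^*\D_2(d)=\D_1(d)+d\,\div(f)$, not of $\D_1(d)$. The degree-$d$ unit of $A[C_0,\D_1]$ coming from $\alpha_2$ is $F^*(\alpha_2)=f^d\psi^*(\alpha_2)$. So the only degree-$0$ unit you can actually form is $\alpha_1/F^*(\alpha_2)=1$, and it tells you nothing about $f$.

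In fact $\div(f)=\psi^*\D_2(1)-\D_1(1)$ is nonzero in general. Take $C_0=\A^1=\Spec k[u]$, $\D_1=[\frac1d]\otimes\{0\}$, $\D_2=[\frac{d+1}{d}]\otimes\{0\}$, $\alpha_1=u^{-1}$, $\alpha_2=u^{-d-1}$ and $(\psi,\mathfrak f)=(\mathrm{id},\{1\}\otimes u)$. Then $F^*\alpha_2=\alpha_1$, so both pairs give the same curve, yet $f=u$. Moreover, no isomorphism with $f$ a unit exists, since that would force $\psi^*\D_2=\D_1$.

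So this paragraph must be dropped, and Definition~\ref{def: pairs isomorphic} must be read with $f=\mathfrak f(1)\in k(C_0)^{*}$. Equivalently, require $\alpha_1/(\mathfrak f(d)\psi^*\alpha_2)\in(k[C_0]^{*})^d$, since such a unit can be absorbed into $\mathfrak f$. With that reading, the rest of your argument proves the equivalence and the ``moreover'' clause: a $G$-isomorphism of fibers gives $F$ with $F^*\alpha_2=\alpha_1$, and conversely.

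\textbf{The surjectivity claim for $T(k)\to(T/G)(k)$ is false in general.} This map is $\lambda\mapsto\lambda^d$ on $k^{*}$, which is surjective only if $k^{*}=(k^{*})^d$; a perfect field such as $\mathbb{F}_q$ need not satisfy this. For $c\notin(k^{*})^d$ the fiber over $c$ is only a twisted form of the fiber over $1$, so your ``iff up to a constant $c$'' is wrong. This does no harm to the main equivalence, since a $G$-isomorphism of fibers gives $g=\mathrm{id}$ on the nose.

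It does break your proof of the ``in particular'' clause. There you must kill the constant $c=\alpha_1/F^*\alpha_2\in k[C_0]^{*}=k^{*}$ by taking a $d$-th root. That works when $k^{*}=(k^{*})^d$ (e.g.\ $k$ algebraically closed), but the clause is false otherwise. Take $k=\mathbb{F}_7$, $d=3$, $C_0=\A^1$, $\D=[\frac13]\otimes\{0\}+[\frac23]\otimes\{1\}$ and $\alpha=u^{-1}(u-1)^{-2}$. The geometrically integral pairs $(\D,\alpha)$ and $(\D,3\alpha)$ give the curves $\chi^3=u(u-1)^2$ and $\chi^3=3^{-1}u(u-1)^2$. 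A $G$-isomorphism between them would need $h\in k(u)$ with $h^3=3$ (if $\psi=\mathrm{id}$) or $h^3=-3u/(u-1)$ (if $\psi^*u=1-u$). Both are impossible, since $3\notin(\mathbb{F}_7^{*})^3=\{\pm1\}$.

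The paper gives no argument for this clause. You need to add the hypothesis $k^{*}=(k^{*})^d$ there.
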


\begin{proof}
	Let $U_{1}$ and $U_{2}$ be $G$-normal separated irreducible schemes of dimension~$1$ over $C_0$ arising from $(\D_{1},\alpha_{1})$ and $(\D_{2},\alpha_{2})$ respectively. Assume that $U_{1}$ and $U_{2}$ are $G$-isomorphic. This yields a $T$-equivariant commutative diagram
\[ 
\xymatrixcolsep{5pc}\xymatrix{
U_{1}^{\#} \ar[rr]^{} \ar[rd]_{\Phi_{\alpha_1}} 
&& U_{2}^{\#} \ar[ld]^{\Phi_{\alpha_2}} \\
&T/G
}.
\]
Hence, the diagram above is equivalent to an isomorphism of pp-divisors $(\psi,\mathfrak{f}):\D_{1}\to\D_{2}$ such that $\mathfrak{f}(d)\psi^{*}(\alpha_{2})=\alpha_{1}$. Notice that, for some $f\in k(C_0)^{*}$, $\mathfrak{f}(m)=f^{m}$ for every $m\in M$. Besides, since 
\[
\alpha_{1},\psi^{*}(\alpha_{2})\in A[C_0,\D_{1}]^{*}\cap\H^{0}(C_0,\mathscr{O}_{C_0}(\D_{1}(d))),
\]
 it follows that
\[
f^{d}=\alpha_{1}/\psi^{*}(\alpha_{2})\in A[C_0,\D_{1}]^{*}\cap\H^{0}(C_0,\mathscr{O}_{C_0}(\D_{1}(0)))=k[C_0]^{*}.
\]
Thus, $f\in k[C_0]^{*}$.

Conversely, if $(\psi,\mathfrak{f}):\D_{1}\to\D_{2}$ is an isomorphism such that $\alpha_{1}/\psi^{*}(\alpha_{2})=\mathfrak{f}(d)=f^{d}$, for some $f\in k[C_0]^{*}$, then we have a commutative diagram 
\[
\xymatrixcolsep{5pc}\xymatrix{
X(\D_{1}) \ar[r]^{} \ar[d]_{\Phi_{\alpha_{1}}} 
& X(\D_{2}) \ar[d]^{\Phi_{\alpha_{2}}} \\
T/G \ar[r]^{\simeq} & T/G
}
\]
that is $T$-equivariant. Then, the fibers $U_{1}$ and $U_{2}$ are $G$-isomorphic. 
Since $U_{1}$ is geometrically integral if and only if $U_{2}$ is, it follows that the fiber $\Phi_{\alpha_1}^{-1}(\{1\})$ is geometrically integral if and only if $\Phi_{\alpha_2}^{-1}(\{1\})$ is.
\end{proof}

Before giving a combinatorial description of the data $T/G \overset{\Phi}{\twoheadleftarrow} U^{\#} \overset{\pi}{\twoheadrightarrow} C_0$, we first refine the description of the pp-divisor arising in this context. For any $G$-normal affine curve, the tail cone of any pp-divisor describing its respective normal $T$-surface is $\{0\}$ by Proposition~\ref{prop: invertible element}. 
The next result follows from \cite[Proposition 7.5]{GaryAH}, which states that the polyhedra of a pp-divisor describing $U^{\#}$ are singletons, and Proposition~\ref{prop: invertible element}. It will be used to prove Proposition~\ref{proposition: integrality of the fiber} and in Section~\ref{sec: boundedness result in the affine case}.

\begin{lemma}\label{lem: symmetry of pp-div}
Let $U$ be a $G$-normal affine curve and $U^\#$ its associated affine $T$-surface. 
If $\D\in\PPDiv_{\Q}(Y,\{0\})$ satisfies $U^\#\simeq X(\D)$, then:  
\begin{enumerate}
    \item\label{item i: proposition: symmetry of pp-div} $\D(-m) = -\D(m)$ for all $m\in M$;
    \item\label{item ii: proposition: symmetry of pp-div} $a\in A[Y,\D]^{*}\cap \H^{0}(Y,\mathscr{O}_{Y}(\D(m)))$ if and only if $\D(m)=-\div(a)$ is a principal Cartier divisor;
    \item\label{item iii: proposition: symmetry of pp-div} $\D(d)$ is a principal Cartier divisor; 
    \item\label{item iv: proposition: symmetry of pp-div} for every $\Phi$-structure $\alpha$ on $\D$ we have $\div(\alpha)=-\D(d)$; and
    \item\label{item v: proposition: symmetry of pp-div}  $\D(m)$ is a principal Cartier divisor if and only if $m \cdot v_D \in \Z$ for every prime divisor $D$.
\end{enumerate}
\end{lemma}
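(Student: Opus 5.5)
The plan is to use the cited fact \cite[Proposition~7.5]{GaryAH}: since $U^\#$ comes from a $G$-normal curve (so $U^\#$ is smooth by Proposition~\ref{prop: second reduction}) and the tail cone is $\{0\}$ by Proposition~\ref{prop: invertible element}, every polyhedron of $\D$ is a singleton. We write $\D=\sum \{v_D\}\otimes D$ with $v_D\in\Q$, almost all zero. With this normal form, each item becomes a short computation, carried out in the order (i), (v), (ii), (iii), (iv).

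\textbf{Step (i).} For a singleton, $\min_{v\in\{v_D\}}\langle m,v\rangle = m v_D$. Hence $\D(m)=\sum m v_D\, D$ is linear in $m$, and $\D(-m)=-\D(m)$.

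\textbf{Step (v).} This is the heart of the argument, and I would handle it before (ii). On the smooth affine curve $Y=C_0$, every divisor is Cartier, so the question is integrality plus principality. The direction ($\Rightarrow$) is immediate, since a principal Cartier divisor has integer coefficients. For ($\Leftarrow$), suppose $mv_D\in\Z$ for all $D$, so $\D(m)$ is an integral divisor. We must show it is principal, and the extra input comes from the $\Phi$-structure. By Proposition~\ref{prop: invertible element} there is a unit $\alpha\in A[Y,\D]^*$ of degree $d$, with inverse of degree $-d$. I would first prove (ii), then deduce that $\D(d)$ is principal, hence so is $\D(m)$ whenever $m\in d\Z$. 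For general $m$ with $mv_D\in\Z$, the plan is to show $d\mid m$. The point is that $\Phi^{-1}(\{1\})=U$ is a $G$-curve with faithful (generically free) action. If all $m v_D$ were integral for some $m\notin d\Z$, then the isotropy data would force the stabilizers over all points to lie in $\mu_{m'}$ with $m'=\gcd(m,d)<d$. I expect this step to be the main obstacle, and I am unsure it is the intended route. A cleaner alternative is to observe that "principal" may be meant for those $m$ only up to torsion in $\Pic(C_0)$, and then argue that $\D(m)$ is principal once $\D(d)$ is, by subtracting multiples of the principal divisor $\D(d)=-\div(\alpha)$ combined with Euclid on $m$ and $d$. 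I would try the Euclid argument first: write $g=\gcd(m,d)=am+bd$, and use that $\D(m)$ principal and integral for all multiples of $g$ once $mv_D\in\Z$. This requires that $\D(m)$ is principal whenever integral, which is exactly what is to be proven, so it must be supplemented by the geometric stabilizer input.

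\textbf{Step (ii).} If $a$ is a homogeneous unit of degree $m$, its inverse $a^{-1}$ has degree $-m$ and lies in $\H^0(Y,\O(\lfloor\D(-m)\rfloor))$. Then
\[
\div(a)+\lfloor \D(m)\rfloor\ge 0 \quad\text{and}\quad -\div(a)+\lfloor -\D(m)\rfloor\ge 0,
\]
using (i). Adding these gives $\lfloor x\rfloor+\lfloor -x\rfloor\ge 0$ coefficientwise, which forces integrality of the coefficients and equality $\D(m)=-\div(a)$. Conversely, if $\D(m)=-\div(a)$, then $a$ and $a^{-1}$ lie in the right graded pieces, and their product is $1$.

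\textbf{Steps (iii) and (iv).} Both follow immediately from (ii) applied to the $\Phi$-structure $\alpha$ with $m=d$.
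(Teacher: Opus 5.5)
Your proofs of (i)--(iv) match the paper's. The singleton form $\D=\sum\{v_D\}\otimes D$ gives linearity of $m\mapsto\D(m)$. The two floor inequalities for a homogeneous unit and its inverse give (ii); adding them and using $\lfloor x\rfloor+\lfloor -x\rfloor\in\{0,-1\}$ is a compact version of the paper's contradiction argument. Items (iii) and (iv) then follow by applying (ii) to a $\Phi$-structure with $m=d$. The direction ``$\D(m)$ principal $\Rightarrow$ $m v_D\in\Z$'' of (v) is also fine.

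The gap is the converse in (v), and your proposed route cannot work. Knowing that all stabilizers lie in $\mu_{\gcd(m,d)}$ contradicts nothing, because faithfulness only forces trivial stabilizers at a general point. For instance, if $U\to Y$ is a $G$-torsor, then every $v_D$ is an integer, every $m$ satisfies the hypothesis, and so $d\mid m$ fails for $m=1$. The Euclid variant is circular, as you noticed.

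More fundamentally, the converse is false without a hypothesis on $\mathrm{Cl}(Y)$, so no argument can close the gap. Here is a counterexample.
\begin{itemize}
\item Take $k$ algebraically closed with $p\neq 2$, $d=2$, an elliptic curve $E$ with origin $O$, a point $P\in E[2]\setminus\{O\}$, $Y:=E\setminus\{O\}$ and $\D:=\{1\}\otimes\{P\}$.
\item Choose $h$ with $\div_E(h)=2\{P\}-2\{O\}$. Then $\alpha:=h^{-1}$ is a $\Phi$-structure on $\D$.
\item $h$ is not a square in $k(E)$, since a square root $g$ would satisfy $\div(g)=\{P\}-\{O\}$. Hence $U:=\Phi_\alpha^{-1}(\{1\})$ is the restriction over $Y$ of a connected \'etale double cover of $E$.
\item So $U$ is smooth, hence $\mu_2$-normal, and $U^\#\simeq X(\D)$.
\item For $m=1$ every coefficient $m v_D$ is an integer. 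Yet $\D(1)=\{P\}$ is not principal on $Y$, since $\Pic(Y)\simeq\Pic^0(E)\simeq E(k)$ sends its class to $P\neq O$.
\end{itemize}

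What is actually true is the ``only if'' direction. The ``if'' direction also holds when, say, $k[Y]$ is a UFD, e.g.\ $Y=\A^1$, which is the setting of Remark~\ref{remark: boundedness affine case} where it is used. It is also trivial on a smooth curve if ``principal'' is read locally.

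The paper's own proof does not supply more. After showing $\div(a)+\D(m)=0$ for a homogeneous unit $a$ of degree $m$, it simply declares (ii) and (v) equivalent. That only yields the direction you already have.
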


\begin{proof}
By \cite[Proposition 7.7]{GaryAH}, we have $\D = \sum \{v_D\}\otimes D$, so
\[
\forall m \in M,\ \D(-m) = \sum -m\cdot v_D D = -\D(m),
\]
proving~\ref{item i: proposition: symmetry of pp-div}.  

Let $a\in A[Y,\D]^{*}\cap \H^{0}(Y,\mathscr{O}_{Y}(\D(m)))$. This implies that $a^{-1}\in \H^{0}(Y,\mathscr{O}_{Y}(\D(-m)))$, and by definition
\begin{equation}\label{equation: D(q) principal}
\div(a)+\lfloor\D(m)\rfloor\ge0
\quad\text{and}\quad
\div(a^{-1})+\lfloor\D(-m)\rfloor\ge0.
\end{equation}  
We claim that $\div(a)+\D(m)=0$, which proves the assertion. This is equivalent to $\D(m)=\lfloor\D(m)\rfloor$. Then,~\ref{item ii: proposition: symmetry of pp-div} and~\ref{item v: proposition: symmetry of pp-div} are equivalent. We argue by contradiction.
Assume that there exists a prime divisor $D$ such that $m\cdot v_{D}\in\Q\setminus\Z$. For such $D$, one has
\[
\lfloor -m\cdot v_{D}\rfloor=
\begin{cases}
-\lfloor m\cdot v_{D}\rfloor-1 & \text{if } m\cdot v_{D}\in\Q\setminus\Z,\\
-\lfloor m\cdot v_{D}\rfloor & \text{otherwise.}
\end{cases}
\]
Hence,
\[
\lfloor\D(-m)\rfloor=-\lfloor\D(m)\rfloor-\sum_{m\cdot v_{D}\in\Q\setminus\Z} D.
\]
Substituting into the right-hand inequality of \eqref{equation: D(q) principal}, we obtain
\[
\div(a)+\lfloor\D(m)\rfloor+\sum_{m\cdot v_{D}\in\Q\setminus\Z} D\le0.
\]
Combining this with the left-hand inequality of \eqref{equation: D(q) principal} yields
\[
0<\sum_{m\cdot v_{D}\in\Q\setminus\Z} D
\le \div(a)+\lfloor\D(m)\rfloor+\sum_{m\cdot v_{D}\in\Q\setminus\Z} D
\le 0,
\]
a contradiction. This proves the claim and hence~\ref{item ii: proposition: symmetry of pp-div} and~\ref{item v: proposition: symmetry of pp-div}. 

Recall that $\D$ admits a $\Phi$-structure $\alpha$ by Proposition~\ref{prop: invertible element}~\ref{item: lemma invertible element i}. Then,~\ref{item iii: proposition: symmetry of pp-div} follows from~\ref{item ii: proposition: symmetry of pp-div}. Likewise,~\ref{item iv: proposition: symmetry of pp-div} also follows from~\ref{item ii: proposition: symmetry of pp-div}.
\end{proof}

So far, we have studied the structure of pp-divisors arising from $G$-normal affine curves. As noted in Example~\ref{example: same pp-div different curves}, a pp-divisor alone is not sufficient, and not every $\Phi$-pair $(\D,\alpha)$ gives rise to an $G$-normal affine curve (but only to an affine $G$-scheme of dimension $1$). We therefore need a criterion ensuring that a $\Phi$-pair $(\D,\alpha)$ defines a $G$-normal curve. This amounts to determining when the fiber of $\Phi_{\alpha}$ over the base point is geometrically integral in combinatorial terms.

\begin{definition} \label{def: integral pairs}
A $\Phi$-pair $(\D,\alpha)$ over $C_0$ is \emph{integral} if $\alpha$ is not a power of some homogeneous element of $A[C_0,\D]$. The $\Phi$-pair $(\D,\alpha)$ is called \emph{geometrically integral} if for any algebraic extension $K/k$ the $\Phi$-pair $(\D_{K},\alpha_{K})$ is integral.
\end{definition}

\begin{proposition}\label{proposition: integrality of the fiber}
A $\Phi$-pair $(\D,\alpha)$ over $C_0$ is integral (resp. geometrically integral) if and only if the fiber of the $T$-equivariant morphism $\Phi_{\alpha}:X(\D)\to T/G$ over the base point is integral (resp. geometrically integral).
\end{proposition}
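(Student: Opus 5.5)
Since $\D=\sum\{v_D\}\otimes D$ with tail cone $\{0\}$ (Lemma~\ref{lem: symmetry of pp-div}), the algebra $A:=A[C_0,\D]$ is $\Z$-graded by $M\simeq\Z$, and $k[T/G]=k[x^{\pm1}]$ with $x$ of degree $d$. The fiber over the base point is $\Spec(A/(\alpha-1))$. The plan is to rewrite this quotient as a quotient of the degree-zero part and then reduce integrality to a polynomial irreducibility question.

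\emph{Step 1: description of the fiber.} Let $A^{(d)}:=\bigoplus_{j\in\Z}A_{jd}$. Because $\alpha\in A_d$ is a unit, multiplication by $\alpha^j$ identifies $A_{jd}$ with $A_0=k[C_0]$. Hence $A^{(d)}=k[C_0][\alpha^{\pm1}]$ is a Laurent polynomial ring over $k[C_0]$, and
\[
A/(\alpha-1)\simeq \bigoplus_{r=0}^{d-1}A_r .
\]
Now $A$ is a domain containing the unit $\alpha$ of degree $d$. Let $e\mid d$ be the index of the subgroup of degrees $m$ for which $A_m$ contains a unit. By Lemma~\ref{lem: symmetry of pp-div}(ii),(v), these are exactly the $m$ with $\D(m)$ principal. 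In this way the fiber becomes the scheme of a finite $k[C_0]$-algebra, locally free of rank $d$. Its generic fiber is
\[
k(C_0)[t]/(t^d-\alpha t_1^{-d}),
\]
where $t_1\in A_1\otimes k(C_0)$ is any nonzero homogeneous element of degree $1$; such an element exists because the polyhedra are points and $A$ is normal with effective action.

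\emph{Step 2: reduction to irreducibility of $t^d-a$.} Put $a:=\alpha/t_1^{d}\in k(C_0)^*$, which is homogeneous of degree $0$. Every homogeneous element of degree $m$ in $\operatorname{Frac}(A)$ has the form $g\,t_1^m$ with $g\in k(C_0)$. So $\alpha$ is a $c$-th power of a homogeneous element of $A$ (degree $d/c$) if and only if $a$ is a $c$-th power in $k(C_0)$. The normality of $A$ handles membership in $A$: if $(h t_1^{d/c})^c=\alpha\in A$, then $ht_1^{d/c}$ is integral over $A$, hence lies in $A$.

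\emph{Step 3: integrality of the fiber.} The fiber is flat over $C_0$, being locally free, so it is integral if and only if it is reduced and irreducible with no embedded components. Flatness gives that every associated point lies over the generic point of $C_0$, so integrality is equivalent to the generic fiber $k(C_0)[t]/(t^d-a)$ being a field. Note that $t^d-a$ is irreducible over a field $F$ if and only if $a$ is not a $c$-th power for $c\mid d$ prime, together with the extra condition for $4\mid d$ that $a\notin-4F^4$. For $p\mid d$, inseparability does no harm here, because non-reducedness of the fiber is precisely the case where $a$ is a $p$-th power. Over the perfect field $k$, this is exactly where Mac Lane's theorem \cite[Theorem~2]{Ma39} should enter: it guarantees that $k(C_0)$ behaves well under purely inseparable base change, i.e.\ $k(C_0)\otimes_k K$ remains reduced with the expected $p$-th power behaviour. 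Combining this with Step~2 gives: integral $\iff$ $\alpha$ is not a proper power of a homogeneous element.

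\emph{Step 4: the geometric version.} Apply the same argument after base change to each algebraic extension $K/k$. Here we use that $X(\D)_K=X(\D_K)$, that the fiber commutes with base change, and that $K\otimes_k k(C_0)$ is still a field because $C_0$ is geometrically integral.

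\emph{Main obstacle.} The hard part is the $4\mid d$ case, where $a=-4b^4$ makes $t^d-a$ reducible even though $a$ is not a power. I would try to show that in this case $\alpha$ is still a homogeneous square up to sign. If instead the statement implicitly absorbs this case, I would check carefully whether the definition of "power" must be read up to constants, and compare against Capelli's theorem.
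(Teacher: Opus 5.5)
Your reduction is sound, and it takes a different route from the paper. The paper argues that $k(T/G)$ is algebraically closed in $k(X(\D))$ and then invokes Mac Lane and \cite[Theorem~7.1]{Bad01}. You instead show that the fiber is finite locally free over $C_0$, so it is integral iff its generic fiber $k(C_0)[t]/(t^d-a^{-1})$ is a field. (The fiber is $a t_1^d=1$, so the constant is $a^{-1}$, not $a$; this is harmless.) Normality of $A$ then turns ``$\alpha$ is a proper power of a homogeneous element'' into ``$a\in k(C_0)^{c}$ for some $c\mid d$, $c>1$''.

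The genuine gap is the Capelli exceptional case you flag. For the non-geometric statement it cannot be closed, because the statement is false there. Take $k=\mathbb{F}_3$, $d=4$, $\D=0$ (so $A=k[C_0][t^{\pm1}]$) and $\alpha=-t^4$. Since $k$ is algebraically closed in $k(C_0)$ and $-1$ is not a square in $\mathbb{F}_3$, $\alpha$ is neither $h^2$ nor $h^4$ with $h$ homogeneous, so $(\D,\alpha)$ is integral. But the fiber is $C_0\times\Spec\,\mathbb{F}_3[t]/(t^4+1)$, and $t^4+1=(t^2+t+2)(t^2+2t+2)$ over $\mathbb{F}_3$, so the fiber is disconnected. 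Your suggested repair (``a square up to sign'') is not allowed by the literal definition of an integral pair. Reading ``power'' up to constants instead breaks the other implication: $d=2$, $\alpha=-t^2$ over $\mathbb{F}_3$ gives the integral fiber $C_0\times\Spec\,\mathbb{F}_9$. The paper's own proof passes over the same point: a $T$-equivariant map $T/G'\to T/G$ pulls $x$ back to $\lambda y^{m}$ with $\lambda\in k^{*}$, so one only gets $\alpha=\lambda\alpha_1^{m}$, not $\alpha=\alpha_1^m$.

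For the geometric statement, which is the one Theorem~\ref{th: affine refinement} uses, your method works once you add the missing observation. Over $\bar k$ one has $-4b^4=(2\sqrt{-1}\,b^2)^2$, and $-4b^4=0$ if $p=2$, so the exceptional case is absorbed into $c=2$. Then the chain of equivalences is:
fiber geometrically integral $\iff$ $t^d-a$ irreducible over $\bar k(C_0)$ $\iff$ $(\D_{\bar k},\alpha_{\bar k})$ integral $\iff$ $(\D_K,\alpha_K)$ integral for every algebraic $K/k$.
Running Step~4 ``for each $K$'' as you wrote it would inherit the gap for every $K$ not containing $\sqrt{-1}$; pass directly to $\bar k$ instead.

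Finally, drop the Mac Lane sentence in Step~3. It is not an argument, and nothing there needs it: Capelli's criterion holds in every characteristic, and a field is reduced whether or not the extension is separable.
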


\begin{proof}
	Assume first that the fiber of $\Phi_{\alpha}$ over the base point is integral. Suppose that there exists $\alpha'\in \H^{0}(C_{0},\mathscr{O}_{C_{0}}(\D(d')))$ such that $\alpha=\alpha'^{m}$ for some $m>1$. Then $md'=d$. Since $\alpha$ is invertible, so is $\alpha'$, and thus $\D(d')$ is principal by Lemma~\ref{lem: symmetry of pp-div}~\ref{item ii: proposition: symmetry of pp-div}. Hence, $(\D,\alpha')$ defines a $T$-equivariant morphism $\Phi_{\alpha'}:X(\D)\to T/G'$, where $G'= \mu_{d'}$, together with a natural embedding $G'\hookrightarrow G$. This yields a commutative diagram
	\[
	\xymatrix{ X(\D) \ar[r]^{\Phi_{\alpha}} \ar[dr]_{\Phi_{\alpha'}} & T/G \\ 
	& T/G' \ar[u]_{\tilde{\Phi}} }
	\]
	with $\Phi_{\alpha}=\tilde{\Phi}\circ\Phi_{\alpha'}$. 
The fiber of $\tilde{\Phi}$ over the base point is $G/G'$, which is not irreducible when $m=d/d'$ is prime to $p$ and not reduced otherwise. Hence the fiber of $\Phi_{\alpha}$ is not irreducible (respectively, not reduced), a contradiction.

	Conversely, assume that $\alpha$ is not a power of any $\alpha'\in \H^{0}(C_{0},\mathscr{O}_{C_{0}}(\D(d')))$ with $d'<d$. By \cite[Theorem~2]{Ma39}, if the extension $\Phi_{\alpha}^{*}:k(T/G)\to k(X(\D))$ is algebraically closed, then it is separable. To apply \cite[Theorem~7.1]{Bad01}, it suffices to show that this extension is algebraically closed. Suppose not, and let $L\subset k(X(\D))$ be a non-trivial finite extension of $k(T/G)$. Then we obtain a sequence of injections of $M$-graded $k$-algebras
	\[
	\xymatrix{ k[T/G] \ar[r]^{\tilde{\Phi}^{*}} & \overline{A[C_0,\D]\cap L} \ar[r] & A[C_0,\D],} 
	\]
	where $\overline{A[C_0,\D]\cap L}$ is the integral closure of $A[C_0,\D]\cap L$ in $A[C_{0},\D]$, inducing a $T$-equivariant diagram
	\[
	\xymatrix{ X(\D) \ar[r]^{\Phi_{\alpha}} \ar[dr]_{\Phi_{1}} & T/G \\ 
	& Z \ar[u]_{\tilde{\Phi}} }
	\]
	where $Z:=\Spec(\overline{A[C_0,\D]\cap L})$ and $\tilde{\Phi}^{*}$ is finite. It follows that $Z$ is a smooth toric curve and $T$ acts on $Z$ transitively, hence $Z\simeq T/G'$ for some finite subgroup scheme $G'\subsetneq T$. Besides, $G'\subsetneq G$ and therefore $d':=|G'|<d$. 
    By Proposition~\ref{prop: invertible element}~\ref{item: lemma invertible element i}, the morphism $\Phi_{1}$ corresponds to an element
	\[
	\alpha_{1}\in A[C_0,\D]^{*}\cap \H^{0}(C_0,\mathscr{O}_{C_0}(\D(d')))
	\]
	with $d'< d$. 
Then, by the commutativity of the latter diagram, $\alpha=\alpha_{1}^{m}$, for some $m\in\N$, contradicting the assumption. Therefore, the field extension $\Phi_{\alpha}^{*}$ is algebraically closed, and the fiber of $\Phi_{\alpha}$ over the base point is integral by \cite[Theorem~7.1]{Bad01}. 

The statement for geometric integrality follows by applying the same argument after base change to an algebraic closure.
\end{proof}

This criterion simplifies for curves with no non-constant invertible regular functions.

\begin{corollary}\label{corollary: integral pair non non-constant regular affine}
Let $C_0$ be a smooth affine curve such that $\bar{k}[C_{0,\bar{k}}]^{*}=\bar{k}^{*}$. Let $(\D,\alpha)$ be a $\Phi$-pair over $C_0$. Then the fiber of the $T$-equivariant morphism $\Phi_{\alpha}:X(\D)\to T/G$ over the base point is geometrically integral if and only if ${{d}}$ is the smallest positive integer such that $\D({{d}})$ is a principal Cartier divisor.
\end{corollary}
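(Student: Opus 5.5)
By Proposition~\ref{proposition: integrality of the fiber}, the fiber of $\Phi_{\alpha}$ over the base point is geometrically integral exactly when $(\D,\alpha)$ is geometrically integral, that is, when for every algebraic extension $K/k$ the element $\alpha_K$ is not a proper power of a homogeneous element of $A[C_{0,K},\D_K]$. Since $\alpha_K$ is a unit, any root $\alpha'$ with $\alpha_K=\alpha'^{m}$ is itself a unit. So the problem reduces to understanding homogeneous units of $A[C_{0,K},\D_K]$ and their degrees. It is enough to work over $K=\bar k$: if $\alpha$ is a proper power over some algebraic extension $K$, it is also one over $\bar k$, and integrality over $\bar k$ implies integrality over every intermediate $K$.

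I would first describe the homogeneous units. Write $\D=\sum\{v_D\}\otimes D$, using Lemma~\ref{lem: symmetry of pp-div}. By Lemma~\ref{lem: symmetry of pp-div}~\ref{item ii: proposition: symmetry of pp-div}, applied after base change (the lemma only uses the shape of $\D$, which is preserved), a unit of degree $m$ exists exactly when $\D_{\bar k}(m)$ is principal, and its divisor is then $-\D(m)$. Principality of $\D(m)$ is the combinatorial condition $m v_D\in\Z$ for all $D$ (part~\ref{item v: proposition: symmetry of pp-div}) together with the class condition, and neither changes under base change. Because $\bar k[C_{0,\bar k}]^{*}=\bar k^{*}$, a unit of degree $m$ is unique up to a scalar in $\bar k^{*}$: two such units have the same divisor $-\D(m)$, so their quotient is a global unit of degree $0$, hence a constant.

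For the direction ($\Leftarrow$), suppose $d$ is minimal with $\D(d)$ principal, and suppose $\alpha=\alpha'^{m}$ over $\bar k$ with $m>1$ and $\alpha'$ homogeneous of degree $d'$. Then $md'=d$ with $0<d'<d$ (the sign is forced since $d>0$), and $\alpha'$ is a unit, so $\D(d')$ is principal by Lemma~\ref{lem: symmetry of pp-div}~\ref{item ii: proposition: symmetry of pp-div}. This contradicts minimality. For the direction ($\Rightarrow$), suppose $d'<d$ is the smallest positive integer with $\D(d')$ principal. The set of $m$ with $\D(m)$ principal is a subgroup of $\Z$ (by additivity of $\D$ on singleton coefficients, see part~\ref{item i: proposition: symmetry of pp-div}), so it equals $d'\Z$, and $d'\mid d$. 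Choose a unit $\beta$ of degree $d'$. Then $\beta^{d/d'}$ has degree $d$ and divisor $-\D(d)=\div(\alpha)$, so $\alpha=c\,\beta^{d/d'}$ with $c\in\bar k^{*}$. Since $\bar k$ is algebraically closed, $c=c'^{d/d'}$, and therefore $\alpha=(c'\beta)^{d/d'}$ is a proper power. This contradicts geometric integrality.

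The main obstacle is the bookkeeping under base change. One has to check that principality of $\D(m)$ is detected equally over $k$ and over $\bar k$, which is needed because the statement mentions $\D(d)$ principal without specifying the field. This uses that $\Pic(C_0)\to\Pic(C_{0,\bar k})$ is injective for the geometrically integral curve $C_0$ over the perfect field $k$ with $\bar k[C_{0,\bar k}]^{*}=\bar k^{*}$, via Hilbert 90 and the Hochschild–Serre sequence. I would take the principality condition over $\bar k$ as the intended reading and remark on this point. The other step needing care is the extraction of the scalar root, which is exactly where algebraic closedness is used.
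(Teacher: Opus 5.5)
Your proof is correct and follows the paper's argument essentially step by step. You reduce to $\bar k$ via Proposition~\ref{proposition: integrality of the fiber}, identify homogeneous units of degree $m$ with principality of $\D(m)$ through Lemma~\ref{lem: symmetry of pp-div}~\ref{item ii: proposition: symmetry of pp-div}, use $\bar k[C_{0,\bar k}]^{*}=\bar k^{*}$ to get uniqueness up to a scalar, and then extract a root of that scalar. Two small differences: your subgroup $d'\Z$ argument replaces the paper's passage to $\gcd(d,d')$, and your Hilbert~90 argument justifies the base-change claim ``$\D(d)$ principal iff $\D_K(d)$ principal,'' which the paper only asserts.
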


\begin{proof}
By Proposition~\ref{proposition: integrality of the fiber}, the fiber of $\Phi_{\alpha}$ over the base point is geometrically integral if and only if, for every algebraic extension $K/k$, the section $\alpha_{K}$ is not a power of some $\alpha'\in \H^{0}(C_{0,K},\mathscr{O}_{C_{0,K}}(\D_{K}({{d}}')))$ with ${{d}}'<{{d}}$. Since $\D(d)$ is principal if and only if $\D_{K}(d)$ is principal, we may reduce to the case where $k=\bar{k}$ is algebraically closed.

Then the claim reduces to showing that $(\D,\alpha)$ is a geometrically integral $\Phi$-pair if and only if $d$ is the smallest positive integer such that $\D(d)$ is a principal Cartier divisor.

Assume first that $(\D,\alpha)$ is a geometrically integral $\Phi$-pair. Suppose that there exists an integer $1<d'<d$ such that $\D(d')=-\div(\alpha')$ is principal. Replacing $d'$ by $\gcd(d,d')$, we may assume that $d=m d'$ for some integer $m>1$, since $\D(\gcd(d,d'))$ is also principal. By Lemma~\ref{lem: symmetry of pp-div}~\ref{item ii: proposition: symmetry of pp-div}, we have
\[
\alpha'\in A[C_{0},\D]^{*}\cap \H^{0}(C_{0},\mathscr{O}_{C_{0}}(\D(d'))).
\]
Moreover, any two elements of
$A[C_0,\D]^{*}\cap \H^{0}(C_0,\mathscr{O}_{C_0}(\D(d)))$
differ by an element of $k[C_0]^{*}=k^{*}$. Hence
$\alpha/(\alpha')^{m}\in k^{*}=(k^{*})^{m}$,
contradicting the geometric integrality of $(\D,\alpha)$.

Conversely, suppose that there exist an integer $d'<d$ with $d=m d'$ and an element
\[
\alpha'\in A[C_{0},\D]^{*}\cap \H^{0}(C_{0},\mathscr{O}_{C_{0}}(\D(d')))
\]
such that $(\alpha')^{m}=\alpha$. By Lemma~\ref{lem: symmetry of pp-div}~\ref{item ii: proposition: symmetry of pp-div}, the divisor $\D(d')$ is principal. By the minimality of $d$, this forces $d'=d$, and hence $m=1$. Therefore $\alpha'=\alpha$, showing that $(\D,\alpha)$ is a geometrically integral $\Phi$-pair.
\end{proof}

\begin{corollary}\label{corollary: bijection Phi-pairs pp-divs}
    Let $\D_{1}$ and $\D_{2}$ be two pp-divisors in $G\text{-}\PPDiv_{\Q}(C_0,\{0\})$. If $\D_{1}$ and $\D_{2}$ are isomorphic pp-divisors, then there is a bijection between the sets of $\Phi$-pairs $(\D_{1},\alpha_{1})$ and $(\D_{2},\alpha_{2})$ over $C_0$, sending geometrically integral $\Phi$-pairs to geometrically integral $\Phi$-pairs.
\end{corollary}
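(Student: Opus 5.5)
Fix an isomorphism of pp-divisors $(\psi,\mathfrak{f})\colon \D_{1}\to\D_{2}$, where $\psi$ is an automorphism of $C_0$ and $\mathfrak{f}$ is a plurifunction. Since $N=\Z$, we have $\mathfrak{f}(m)=f^{m}$ for some $f\in k(C_0)^{*}$. Because the tail cone is $\{0\}$, the isomorphism condition is an equality, $\psi^{*}\D_{2}=\D_{1}+\div(\mathfrak{f})$ (up to the sign convention fixed in the definition). It therefore induces an $M$-graded isomorphism of $k$-algebras
\[
\Theta\colon A[C_0,\D_{2}]\longrightarrow A[C_0,\D_{1}],\qquad a\in\H^{0}(C_0,\mathscr{O}_{C_0}(\D_{2}(m)))\longmapsto \psi^{*}(a)\,f^{m},
\]
with the sign of the exponent chosen to match the convention. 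This is just the comorphism of the $T$-equivariant isomorphism $X(\D_{1})\to X(\D_{2})$ given by the equivalence of categories recalled in Section~\ref{sec: AHS theory reminder}.

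The bijection is $\alpha_{2}\mapsto\alpha_{1}:=\Theta(\alpha_{2})$. Since $\Theta$ is a graded ring isomorphism, it sends units to units and degree-$d$ elements to degree-$d$ elements. Hence it maps $A[C_0,\D_{2}]^{*}\cap\H^{0}(C_0,\mathscr{O}_{C_0}(\D_{2}(d)))$ bijectively onto the corresponding set for $\D_{1}$, with inverse $\Theta^{-1}$. So $\Phi$-structures on $\D_{2}$ correspond bijectively to $\Phi$-structures on $\D_{1}$. One can also check this directly: by Lemma~\ref{lem: symmetry of pp-div}~\ref{item iv: proposition: symmetry of pp-div}, $\div(\alpha_{2})=-\D_{2}(d)$, and pulling back by $\psi$ and multiplying by $f^{d}$ gives $\div(\Theta(\alpha_{2}))=-\D_{1}(d)$.

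For integrality, I would use Definition~\ref{def: integral pairs} directly. If $\alpha_{1}=\Theta(\alpha_{2})$ were a power $\beta^{m}$ with $m>1$ of a homogeneous $\beta\in A[C_0,\D_{1}]$, then $\alpha_{2}=\Theta^{-1}(\beta)^{m}$, and $\Theta^{-1}(\beta)$ is again homogeneous. The converse is symmetric, so integrality is preserved in both directions. For geometric integrality, base change to an algebraic extension $K/k$: the isomorphism $(\psi,\mathfrak{f})$ base changes to an isomorphism $\D_{1,K}\to\D_{2,K}$, and the induced map $\Theta_{K}=\Theta\otimes_{k}K$ sends $\alpha_{2,K}$ to $\alpha_{1,K}$. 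Applying the same argument over $K$ gives the claim. Alternatively, one can note that $\Theta$ yields a $T$-isomorphism $X(\D_{1})\simeq X(\D_{2})$ compatible with $\Phi_{\alpha_{1}}$ and $\Phi_{\alpha_{2}}$, so their fibers over the base point are isomorphic, and then invoke Proposition~\ref{proposition: integrality of the fiber}.

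The only delicate step is making sure that the graded map $\Theta$ is well defined on each graded piece, i.e.\ that multiplication by $f^{m}$ carries $\H^{0}(\lfloor\D_{2}(m)\rfloor)$ onto $\H^{0}(\lfloor\psi^{*}\D_{1}(m)\rfloor)$ with the right sign convention. This follows from the equality of the evaluated divisors together with the fact that $\div(f^{m})$ is integral, so the round-down commutes with adding it. Alternatively, it is immediate from the functoriality of $X(\cdot)$.
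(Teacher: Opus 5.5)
Your proposal is correct. The bijection is built exactly as in the paper: $(\psi,\mathfrak f)$ induces the graded isomorphism $a\mapsto \mathfrak f(m)\psi^{*}(a)$ in degree $m$, and units of degree $d$ correspond to units of degree $d$. With the paper's convention $\psi^{*}\D_2=\D_1+\div(\mathfrak f)$, the exponent is indeed $+m$.

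You differ from the paper in the integrality step. The paper asserts that $(\D_2,\alpha_2)$ and $(\D_1,\Psi_{\mathfrak f}(\alpha_2))$ are isomorphic $\Phi$-pairs. It then argues through the fibres of $\Phi_\alpha$, using Proposition~\ref{prop: pais D alpha} and Proposition~\ref{proposition: integrality of the fiber}; the latter rests on Mac Lane's theorem. Your main argument instead transports the defining condition of Definition~\ref{def: integral pairs} directly through the graded isomorphism and its base change. It is purely algebraic and needs neither proposition.

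Your route is also slightly more robust. Definition~\ref{def: pairs isomorphic} requires $\mathfrak f(d)$ to be the $d$-th power of a unit of $k[C_0]$, which forces $\div(\mathfrak f)=0$, i.e.\ $\psi^{*}\D_2=\D_1$. A general isomorphism of pp-divisors need not satisfy this. For example, $[1/d]\otimes\{0\}$ and $[(d+1)/d]\otimes\{0\}$ on $\A^{1}$ are isomorphic via $\mathfrak f=[1]\otimes u$, yet no isomorphism between them has $\div(\mathfrak f)=0$. So the paper's step really needs the weaker compatibility $\alpha_1=\mathfrak f(d)\psi^{*}(\alpha_2)$. That compatibility is exactly what your alternative fibre-based argument uses: a $T$-isomorphism $X(\D_1)\simeq X(\D_2)$ intertwining $\Phi_{\alpha_1}$ and $\Phi_{\alpha_2}$.
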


\begin{proof}
    Let $(\psi,\mathfrak{f}):\D_{1}\to\D_{2}$ be an isomorphism. The induced isomorphism of $M$-graded $k$-algebras
$\Psi_{\mathfrak{f}}:k[C_0,\D_{2}]\to k[C_0,\D_{1}]$
sends a $\Phi$-structure $\alpha_{2}$ on $\D_{2}$ to the $\Phi$-structure
$ \Psi_{\mathfrak{f}}(\alpha_{2}):=\mathfrak{f}({{d}})\psi^{*}(\alpha_{2})$
on $\D_{1}$, since invertible homogeneous elements are sent to invertible homogeneous elements of the same degree. Conversely, for any $\Phi$-structure $\alpha_{1}$ on $\D_{1}$, the element $\Psi_{\mathfrak{f}}^{-1}(\alpha_{1})$ is a $\Phi$-structure on $\D_{2}$. Hence, $\Psi_{\mathfrak{f}}$ induces a bijection between the sets of $\Phi$-pairs $(\D_{1},\alpha_{1})$ and $(\D_{2},\alpha_{2})$ over $C_0$. 

Moreover, the $\Phi$-pairs $(\D_{2},\alpha_{2})$ and $(\D_{1},\Psi_{\mathfrak{f}}(\alpha_{2}))$ are isomorphic. Hence, by Propositions~\ref{prop: pais D alpha} and~\ref{proposition: integrality of the fiber}, $(\D_{2},\alpha_{2})$ is a geometrically integral $\Phi$-pair if and only if $(\D_{1},\Psi_{\mathfrak{f}}(\alpha_{2}))$ is a geometrically integral $\Phi$-pair.
\end{proof}

We are now able to prove the main result of this section.

\begin{theorem}\label{th: affine refinement}
As before, let $C_0$ be a smooth affine curve, and let $G = \mu_{{d}}$ be a finite subgroup scheme of $T=\Gm$.
There is a one-to-one correspondence
\[\E_{\mathrm{aff}}:=
\left\{
\begin{array}{c}
\text{$G$-isomorphism classes of $G$-normal}\\
\text{affine curves with quotient $C_0$}
\end{array}
\right\}
\;\longleftrightarrow\;
\mathcal{S}_{\mathrm{aff}}:=\left\{
\begin{array}{c}
\text{isomorphism classes of geometrically}\\
\text{integral $\Phi$-pairs $(\D,\alpha)$ over $C_0$}
\end{array}
\right\}.
\]
\end{theorem}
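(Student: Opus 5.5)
The plan is to compose the bijection $\E_{\mathrm{aff}}\leftrightarrow\mathcal{T}_{\mathrm{aff}}$ of Proposition~\ref{prop: second reduction} with a bijection $\mathcal{T}_{\mathrm{aff}}\leftrightarrow\mathcal{S}_{\mathrm{aff}}$ built from Altmann--Hausen theory. Concretely, I will define a map $\E_{\mathrm{aff}}\to\mathcal{S}_{\mathrm{aff}}$ and then check, in order, that it is well defined, surjective, and injective.

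\textbf{The map.} Let $U$ be a $G$-normal affine curve with quotient $C_0$. Proposition~\ref{prop: invertible element}~\ref{item: lemma invertible element i} gives a $\Phi$-pair $(\D,\alpha_\Phi)$ with $X(\D)\simeq U^{\#}$ and $\Phi_{\alpha_\Phi}=\Phi$. The fiber of $\Phi$ over the base point is $U$, which is geometrically integral. By Proposition~\ref{proposition: integrality of the fiber}, the pair $(\D,\alpha_\Phi)$ is therefore geometrically integral.

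\textbf{Well-definedness.} The class of $(\D,\alpha_\Phi)$ must not depend on the choice of $\D$, nor on the representative $U$ of its $G$-isomorphism class. Suppose $U\simeq U'$ $G$-equivariantly. Then $U^{\#}\simeq U'^{\#}$ over $T/G$, up to the automorphisms allowed in $\mathcal{T}_{\mathrm{aff}}$. By the equivalence of categories $\D\mapsto X(\D)$, this isomorphism comes from an isomorphism of pp-divisors $(\psi,\mathfrak f)$. Compatibility with $\Phi$ then gives $\alpha_1=\mathfrak f(d)\psi^*(\alpha_2)$, and $f\in k[C_0]^*$ follows by the argument in the proof of Proposition~\ref{prop: pais D alpha}. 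So the two pairs are isomorphic in the sense of Definition~\ref{def: pairs isomorphic}. The same argument shows that two pp-divisors describing the same $U^{\#}$ yield isomorphic pairs, since by Corollary~\ref{corollary: bijection Phi-pairs pp-divs} an isomorphism of pp-divisors transports $\Phi$-structures compatibly.

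\textbf{Surjectivity.} Given a geometrically integral $\Phi$-pair $(\D,\alpha)$, Proposition~\ref{proposition: integrality of the fiber} shows that $\Phi_\alpha^{-1}(\{1\})$ is geometrically integral. Proposition~\ref{prop: invertible element}~\ref{item: lemma invertible element ii} then produces a $G$-normal affine curve $U$ with quotient $C_0$ realizing $(\D,\alpha)$. Here I need that $X(\D)\simeq U^{\#}$ is smooth, which holds by \cite[Lemma~D.6]{Bri26} and \cite[Corollary~2.1.5]{Bri26} as in Proposition~\ref{prop: second reduction}, and that $U$ is affine as a closed subscheme of the affine $X(\D)$.

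\textbf{Injectivity.} If two curves give isomorphic geometrically integral $\Phi$-pairs, then Proposition~\ref{prop: pais D alpha} shows they are $G$-isomorphic.

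\textbf{Main obstacle.} The delicate point is bookkeeping for the automorphisms $g$ of $T/G$ and $h$ of $C_0$ permitted in $\mathcal{T}_{\mathrm{aff}}$. The automorphism $h$ is absorbed into $\psi$. A $T$-equivariant automorphism of $T/G$ is translation by some $c\in(T/G)(k)$, which rescales $\alpha$ by a constant $c\in k^*$. I will need to check that this matches the relation $\alpha_1/\psi^*(\alpha_2)=f^d$, perhaps after enlarging it to allow constants. My first attempt is to show that fibers over other $k$-points correspond to twisted $G$-curves, so that the base-point fiber only determines $\alpha$ up to $d$-th powers. If that fails, I will adjust the relation to $f^d\cdot k^*$, consistently with Proposition~\ref{prop: pais D alpha}.
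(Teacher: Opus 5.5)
\textbf{Verdict.} Your argument is essentially the paper's proof, and its main line is correct. Proposition~\ref{prop: invertible element}~\ref{item: lemma invertible element i} with Proposition~\ref{proposition: integrality of the fiber} gives the map, part~\ref{item: lemma invertible element ii} with Proposition~\ref{proposition: integrality of the fiber} gives surjectivity, and Proposition~\ref{prop: pais D alpha} gives well-definedness and injectivity on isomorphism classes. What needs correcting is your ``main obstacle'': it does not arise, and its fallback would be wrong.

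\textbf{The automorphisms of $T/G$ never enter.} Neither the paper's argument nor the one you actually write uses the equivalence relation of $\mathcal{T}_{\mathrm{aff}}$. A $G$-isomorphism $U\to U'$ induces $T\times^G U\to T\times^G U'$ lying over the \emph{identity} of $T/G$. This yields exactly $\alpha_1=\mathfrak f(d)\psi^*(\alpha_2)$. Conversely, such an equality gives an isomorphism $X(\D_1)\to X(\D_2)$ over the identity of $T/G$, hence a $G$-isomorphism of the fibers over $1$.

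\textbf{Do not enlarge the relation to $f^d\cdot k^*$.} That would make the statement false in general when $k^*\neq(k^*)^d$. Your own first idea explains why. Replacing $\alpha$ by $c\alpha$ replaces $\Phi_\alpha^{-1}(1)$ by $\Phi_\alpha^{-1}(c^{-1})$, which is the twist of $U$ by the $\mu_d$-torsor $\{t^d=c^{-1}\}$. That twist is $G$-isomorphic to $U$ (translate by a $d$-th root) when $c\in(k^*)^d$, but not in general.

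For example, take $p$ odd, $k=\mathbb{F}_p$, $d=2$, $C_0=\A^1$ and $\D=[\tfrac12]\otimes\{0\}+[\tfrac12]\otimes\{1\}$. The geometrically integral pairs $(\D,\lambda/(u(u-1)))$ have as fibers the smooth affine conics $\lambda x^2=u^2-u$. These have two rational points at infinity if $\lambda$ is a square and a single point of degree $2$ otherwise, so they are not isomorphic.

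The same example shows that the arbitrary translations $g$ allowed in $\mathcal{T}_{\mathrm{aff}}$ make that relation too coarse over such $k$. So you cannot literally compose through $\mathcal{T}_{\mathrm{aff}}$; go directly, as the paper does.

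\textbf{Minor point on surjectivity.} Lemma~D.6 and Corollary~2.1.5 of Brion take smoothness of the $T$-surface as input; they do not prove it. Smoothness of $X(\D)$ holds because the existence of a $\Phi$-structure forces singleton coefficients $[r_y/n_y]$ with $\gcd(r_y,n_y)=1$. Hence, over a neighbourhood of each $y$, $X(\D)$ is \'etale over the toric surface of the primitive ray through $(r_y,n_y)$, which is $\A^1\times\Gm$. This is what Proposition~\ref{prop: invertible element}~\ref{item: lemma invertible element ii} implicitly uses.
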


\begin{proof}
By Proposition~\ref{prop: invertible element}~\ref{item: lemma invertible element i} and Proposition~\ref{proposition: integrality of the fiber}, any $G$-normal affine curve $U$ with quotient $C_0$
gives rise to a geometrically integral $\Phi$-pair $(\D,\alpha)$ over $C_0$. Conversely, by Proposition~\ref{prop: invertible element}~\ref{item: lemma invertible element ii} and Proposition~\ref{proposition: integrality of the fiber}, any geometrically integral $\Phi$-pair $(\D,\alpha)$ over $C_0$ defines a $G$-normal affine curve $U$ with quotient $C_0$. The conclusion then follows from Proposition~\ref{prop: pais D alpha}. 
\end{proof}

\subsection{A finiteness result in the affine case}\label{sec: boundedness result in the affine case}
According to \cite[Proposition~6.28]{GaryAH}, a pp-divisor $\D$ encoding a $G$-normal affine curve $U$ can be written as $\D = \sum \{v_{y}\}\otimes\{y\}$. The next result (Lemma~\ref{lem: fiber polyhedrons}) relates the coefficient $v_{y}$ of $\D$ to the stabilizer of a fiber $x \in \pi^{-1}(\{y\})$ under the $G$-action on $U$, or equivalently to the stabilizer of the $T$-action on $U^{\#}$ (by Lemma~\ref{lem:stab}). For this purpose, we recall some definitions from \cite[Definition~4.19]{GaryAH} and \cite[Definition~7.7]{AH06}.

\begin{definition}
Let $X$ be a normal affine $T$-variety and $x\in X$ a point. Let $\D\in \PPDiv_{\Q}(Y,\omega)$ be such that $X = X(\D)$ as $T$-varieties.  
\begin{enumerate}
    \item The \emph{orbit lattice} of $x$ is the sublattice $M(x)\subset M$ generated by all $m\in M$ for which there exists $f\in \H^{0}(Y,\mathscr{O}_{Y}(\D(m)))$ with $f(x)\neq 0$.
    \item For $y\in \Loc(Y)$, the \emph{fiber monoid} is defined as
    \[
    S_{y} := \{m \in M \mid \D(m) \text{ is principal at } y\},
    \]
    i.e., there exists an open subset of $Y$ where the restriction of $\D(m)$ is principal.
\end{enumerate}
\end{definition}

For any $y \in C_0$, Lemma~\ref{lem: symmetry of pp-div}~\ref{item i: proposition: symmetry of pp-div} implies that its fiber monoid $S_{y}$ is a sublattice of $M$. 
To simplify notation, since $\pi^{-1}(\{y\})$ is a single orbit for each $y \in C_0$, we set 
\[
\Stab_{T}(y) := \Stab_{T}(x) \quad \text{and} \quad M(y) := M(x)
\] 
for any $x \in \pi^{-1}(\{y\})$, where $M(x)$ is the orbit lattice of $x$.

\begin{lemma}\label{lem: fiber polyhedrons}
Let $U$ be a $G$-normal affine variety with associated $T$-variety $U^\#$ and $\D \in \PPDiv_\Q(Y,\{0\})$ such that $U^\# \simeq X(\D)$. Then for each $y \in \mathrm{Loc}(\D)$, there exists $r_y \in \Z$ such that the fiber polyhedron is given by
\[
\Delta_y = \left[ \frac{r_y}{|\mathrm{Stab}_T(y)|} \right], \quad \text{with } \gcd(r_y, |\mathrm{Stab}_T(y)|) = 1,
\]
where $|\mathrm{Stab}_{T}(y)|$ stands for the length of the group scheme $\mathrm{Stab}_{T}(y)$.
\end{lemma}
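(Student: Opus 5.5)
The plan is to read off $\Delta_y$ from the local structure of $X(\D)$ over $y$. Since the tail cone is $\{0\}$ and $\D=\sum\{v_y\}\otimes y$ (by \cite[Proposition~7.7]{GaryAH}, as used in Lemma~\ref{lem: symmetry of pp-div}), each fiber polyhedron is a point $v_y\in\Q$. Write $v_y=r_y/e_y$ in lowest terms, with $e_y\ge1$ and $\gcd(r_y,e_y)=1$. It then suffices to show $e_y=|\Stab_T(y)|$. Because $\pi^{-1}(\{y\})$ is a single $T$-orbit $T\cdot x\simeq T/\Stab_T(x)$, its character lattice is $M(x)=M(y)\subset M=\Z$. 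Moreover, a subgroup scheme of $\Gm$ with character group $\Z/M(y)$ is $\mu_{[M:M(y)]}$. Hence $|\Stab_T(y)|=[M:M(y)]$, and the whole statement reduces to the lattice identity $M(y)=e_y\Z$.

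To compute $M(y)$, I would work locally: shrink $C_0$ to an affine neighbourhood $V$ of $y$ on which $y$ is the only point of the support and $\mathscr{O}(y)$ is principal, generated by a uniformizer-type function $t$ with $\div(t)|_V=y$. Restriction to $\pi^{-1}(V)=X(\D|_V)$ is an open $T$-stable subset containing the orbit, so $M(y)$ can be computed there. Then $\H^0(V,\mathscr{O}(\D(m)))=k[V]\,t^{-\lfloor m v_y\rfloor}\chi^m$. The orbit over $y$ corresponds to the ideal generated by homogeneous elements vanishing at $y$, so a degree-$m$ piece contains a function nonvanishing at $x$ exactly when the generator $t^{-\lfloor m v_y\rfloor}$ is a unit near $y$ after twisting. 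This happens exactly when $\D(m)$ is principal at $y$, i.e.\ $m v_y\in\Z$. This is the identification $M(y)=S_y$ between orbit lattice and fiber monoid, as in \cite[Definition~4.19]{GaryAH} and \cite[\S 7]{AH06}. I expect this to be the main obstacle: I would justify it by the standard description of $T$-orbits in $X(\D)$ via faces of $\Delta_y$ (here the unique face $\{v_y\}$). Concretely, $f\chi^m$ with $f\in\H^0(\mathscr{O}(\lfloor\D(m)\rfloor))$ is nonzero at $x$ iff $\mathrm{ord}_y(f)+\lfloor m v_y\rfloor=0$ and $m v_y=\lfloor m v_y\rfloor$. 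The second condition follows from the symmetry $\D(-m)=-\D(m)$ of Lemma~\ref{lem: symmetry of pp-div}\ref{item i: proposition: symmetry of pp-div}, applied together with the inverse-degree element, exactly as in the proof of Lemma~\ref{lem: symmetry of pp-div}\ref{item ii: proposition: symmetry of pp-div}.

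Granting $M(y)=\{m\in\Z : m r_y/e_y\in\Z\}$, coprimality gives $M(y)=e_y\Z$. Hence $|\Stab_T(y)|=[\Z:e_y\Z]=e_y$, so $\Delta_y=[r_y/|\Stab_T(y)|]$ with $\gcd(r_y,|\Stab_T(y)|)=1$. This length statement holds scheme-theoretically, including the infinitesimal case where $p\mid e_y$, because the stabilizer is identified via its character group, not its points. As a consistency check, Lemma~\ref{lem:stab} shows $\Stab_T(y)\subset G=\mu_d$, so $e_y\mid d$, which matches Lemma~\ref{lem: symmetry of pp-div}\ref{item iii: proposition: symmetry of pp-div}.
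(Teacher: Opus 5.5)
Your proposal is correct and follows essentially the same route as the paper. Both arguments reduce the statement to identifying the orbit lattice $M(y)$ with the fiber monoid $S_y=\{m\in M : m v_y\in\Z\}$ and with the lattice $M^{\Stab_T(y)}$, which has index $|\Stab_T(y)|$; the differences are cosmetic: the paper cites \cite[Corollary~7.11]{AH06} and Lemma~\ref{lem: symmetry of pp-div}\ref{item ii: proposition: symmetry of pp-div} for $M(y)=S_y$ where you sketch the local computation, and it proves coprimality by a minimality argument rather than writing $v_y$ in lowest terms from the start.
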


\begin{proof}
Let $y \in \mathrm{Loc}(Y)$ and set $H_{y} := \mathrm{Stab}_{T}(y) \simeq \mathrm{Stab}_{G}(y)$. By \cite[Proposition~7.7]{GaryAH}, there exists $v_{y} \in N_{\Q}$ such that $\Delta_{y} = [v_{y}]$.  
By \cite[Corollary 7.11]{AH06} and Lemma~\ref{lem: symmetry of pp-div}~\ref{item ii: proposition: symmetry of pp-div}, we have $M(y) = S_{y}$. Since $M(y) = M^{H_{y}}$, it follows that $S_{y} = M_{y} = M^{H_{y}}$.  
Hence, for every $m \in M$, the divisor $\D(|H_{y}| m)$ is principal at $y$, and in particular, $r_{y} := |H_{y}| v_{y} \in \Z$.
	
	It remains to prove that $r_{y}$ is coprime to $\left|H_{y}\right|$. Let $V\subset \mathrm{Loc}(Y)$ be an affine open subset such that $\D|_{V}=\sum_{y\in\mathrm{Supp}(D)}\Delta_{D}\otimes D|_{V}$. Then $\D|_{V}(\left|H_{y}\right|)$ is principal. Suppose $\gcd(r_y,|H_y|)\neq 1$ and let $p$ be a common prime divisor. Then $\left|H_{y}\right|v_{y}/p\in\Z$, so $\D|_{V}(\left|H_{y}\right|/p)$ is principal. This implies that $\left|H_{y}\right|/p\in S_{y}$, contradicting the minimality of $\left|H_{y}\right|$ in $S_{y}$. Therefore, $\gcd(r_{y},\left|H_{y}\right|)= 1$.
\end{proof}

Two $G$-isomorphic $G$-normal affine curves $U$ and $U'$ with quotient the smooth affine curve $C_0$ induce isomorphic geometrically integral $\Phi$-pairs $(\D,\alpha)$ and $(\D',\alpha')$ over $C_0$, as stated in Theorem~\ref{th: affine refinement}. In particular, the pp-divisors $\D$ and $\D'$ are isomorphic. Conversely, by Corollary~\ref{corollary: bijection Phi-pairs pp-divs}, given two isomorphic pp-divisors $\D$ and $\D'$, the corresponding sets of geometrically integral $\Phi$-pairs $(\D,\alpha)$ and $(\D',\alpha')$ are in bijection. Hence, understanding the isomorphism classes of pp-divisors is an important step toward understanding the set $\mathcal{S}_{\mathrm{aff}}$ (defined in the statement of Theorem~\ref{th: affine refinement}).

In light of the previous result (Lemma~\ref{lem: fiber polyhedrons}), for an integral $\Phi$-pair, the polyhedra of $\D$ allow infinitely many choices for the coefficients $r_y$, although only finitely many of them define pairwise non-isomorphic pp-divisors.

\begin{lemma}\label{lem: bounded ry}
Let $C_0$ and $C_0'$ be two smooth affine curves.
Let $\D \in G\text{-}\PPDiv_\Q(C_0,\{0\})$ and $\D' \in G\text{-}\PPDiv_\Q(C_0',\{0\})$ be two pp-divisors. Then $\D$ and $\D'$ are isomorphic if and only if there exists an isomorphism $\psi \colon C_0 \to C_0'$ such that $(\psi^{*}\D'-\D)(1)$ is a principal Cartier divisor. 
In particular, if $k[C_{0}]$ and $k[C_{0}']$ are UFD, $\D$ and $\D'$ are isomorphic if and only if there exists an isomorphism $\psi:C_{0}\to C_{0}'$ such that, for all $y \in C_0$,
\[
\frac{r'_{\psi(y)}}{|\mathrm{Stab}_T(\psi(y))|} - \frac{r_y}{|\mathrm{Stab}_T(y)|} \in \Z.
\]
\end{lemma}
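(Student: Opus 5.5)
The plan is to unwind the definition of an isomorphism of pp-divisors in the case at hand. Here $N=\Z$, the tail cone is $\{0\}$, the base is a curve, and $\psi$ must be an isomorphism. By \cite[Proposition~7.7]{GaryAH} (as used in Lemma~\ref{lem: symmetry of pp-div}), both divisors are of the form $\D=\sum\{v_y\}\otimes y$ and $\D'=\sum\{v'_{y'}\}\otimes y'$, with singleton coefficients.

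A $T$-equivariant isomorphism $(\psi,\mathfrak f)\colon\D\to\D'$ has $F=\mathrm{id}$, since any automorphism of $N=\Z$ other than the identity would reverse the grading. A plurifunction $\mathfrak f\in N\otimes k(C_0)^*$ is then just a single rational function $f$, with $\div(\mathfrak f)=\{1\}\otimes\div(f)$. Since isomorphism forces equality in the defining inequality, the condition becomes the equality of $\Q$-divisors
\[
\psi^{*}\D'=\D+\{1\}\otimes\div(f),
\]
i.e.\ coefficientwise $v'_{\psi(y)}=v_y+\mathrm{ord}_y(f)$. Evaluating at $m=1$ yields $(\psi^*\D'-\D)(1)=\div(f)$, which is principal. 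Conversely, if $(\psi^*\D'-\D)(1)=\div(f)$ for some $f\in k(C_0)^*$, then because all coefficients are points we have $\psi^*\D'=\D+\{1\}\otimes\div(f)$ as polyhedral divisors. So $(\psi,f)$ and $(\psi^{-1},(\psi^{-1})^*f^{-1})$ are mutually inverse morphisms, and $(\psi,f)$ is an isomorphism. One point to watch is that the equivariant isomorphisms of $X(\D)$ and $X(\D')$ considered in this paper might also allow $F=-\mathrm{id}$. I would follow the paper's convention and restrict to $T$-equivariant morphisms $(\psi,\mathfrak f)$. If needed, one could remark that $F=-1$ corresponds to composing with inversion on $T$, which is not $T$-equivariant.

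The main subtlety is a bookkeeping one. The relation ``principal as a $\Q$-divisor'' must mean that $(\psi^*\D'-\D)(1)$ has integer coefficients and is the divisor of a rational function. This is exactly what the equality above gives, and Lemma~\ref{lem: symmetry of pp-div}~\ref{item v: proposition: symmetry of pp-div} makes the integrality explicit. The Cartier condition is automatic on a smooth curve.

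For the UFD statement, when $k[C_0]$ is a UFD every integral Weil divisor on $C_0$ is principal, since $\Pic(C_0)=0$. So the criterion reduces to the integrality of each coefficient of $(\psi^*\D'-\D)(1)$. By Lemma~\ref{lem: fiber polyhedrons} these coefficients are
\[
\frac{r'_{\psi(y)}}{|\Stab_T(\psi(y))|}-\frac{r_y}{|\Stab_T(y)|}.
\]
Requiring them to be integers for all $y$ gives the stated condition. Here $y$ runs over $\Loc(\D)=C_0$ when coefficients are nonempty; at points outside the supports the coefficients vanish, so the condition is trivial there.
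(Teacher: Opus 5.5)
Your proposal is correct and follows essentially the same route as the paper: since the coefficients are singletons, an isomorphism $(\psi,\mathfrak{f})$ amounts to the equality $\psi^{*}\D'=\D+\{1\}\otimes\div(f)$. You evaluate this at $m=1$, reconstruct it conversely from $(\psi^{*}\D'-\D)(1)=\div(f)$, and get the UFD case from triviality of the class group together with Lemma~\ref{lem: fiber polyhedrons}. Your explicit remark that ``isomorphic'' must be read with $F=\mathrm{id}_N$ (i.e.\ $T$-equivariantly) is a useful clarification the paper leaves implicit, since allowing $F=-\mathrm{id}_N$ would in general break the ``only if'' direction (e.g.\ $[\tfrac{1}{d}]\otimes\{0\}$ versus $[-\tfrac{1}{d}]\otimes\{0\}$ on $\A^{1}$ for $d\ge 3$).
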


\begin{proof}
Let $(\psi,\mathfrak{f}): \D \to \D'$ be an isomorphism of pp-divisors. We have $\psi^*\D' = \D + \mathrm{div}(\mathfrak{f})$. In particular, we have that $\psi^{*}\D'(1)=\D(1)+\div(\mathfrak{f}(1))$, where $\mathfrak{f}(1)\in k(C_{0})^{*}$. Hence, $\psi^{*}\D'(1)-\D(1)=\div(\mathfrak{f}(1))$ is a principal Cartier divisor.

Conversely, let $(\psi^{*}\D'-\D)(1)=\div(f)$ for some $f\in k(C_{0})^{*}$. Otherwise stated,
\[
\sum_{y' \in C_0'} v'_{y'} \cdot \{\psi^{-1}(y')\}
- \sum_{y \in C_0} v_{y} \cdot \{y\} = \mathrm{div}(f).
\]
Since, for every $m\in M$, 
\[
(\psi^{*}\D'-\D)(m)= \sum_{y' \in C_0'} mv'_{y'} \cdot \{\psi^{-1}(y')\}
- \sum_{y \in C_0} mv_{y} \cdot \{y\}=m(\psi^{*}\D'-\D(1))=m\div(f) 
\]
and the homomorphism $\mathfrak{f}:\Z\to k(C_{0})^{*}$, given by $\mathfrak{f}(m)=f^{m}$, defines a plurifunction $\mathfrak{f}:=\{1\}\times f$, we have that the pair $(\psi,\mathfrak{f})$ satisfies
\[
\psi^{*}\D'=\D+\div(\mathfrak{f}).
\]
Hence, it defines an isomorphism of pp-divisors. This proves the first part of the claim.

Notice that if $\D$ and $\D'$ are isomorphic, the relation $\psi^{*}\D'=\D+\div(\mathfrak{f})$
immediately implies
\[
n_{y}:=\frac{r'_{\psi(y)}}{|\mathrm{Stab}_T(\psi(y))|}-\frac{r_y}{|\mathrm{Stab}_T(y)|}
=v'_{\psi(y)}-v_y\in\Z,
\]
by Lemma~\ref{lem: fiber polyhedrons}.

Conversely, since $C_{0}$ and $C_{0}'$ are smooth, \cite[Proposition~6.2]{Har} shows that $k[C_{0}]$ and $k[C_{0}']$ are UFDs if and only if their class groups are trivial. Hence every point $y\in C_{0}$ defines a principal Cartier divisor $\{y\}=\div(f_y)$ for some $f_y\in k(C_{0})^{*}$. Therefore,
\[
(\psi^{*}\D'-\D)(1)
=\sum_{y\in C_0} n_y\{y\}
=\sum_{y\in C_0} n_y\div(f_y)
=\div\Bigl(\prod_{y\in C_0} f_y^{\,n_y}\Bigr)
\]
is a principal Cartier divisor. The second assertion then follows from the first.
\end{proof}

\begin{remark}
the map $\pi_{G}: U \to C_0$ is a $G$-torsor exactly when all the stabilizers are trivial, equivalently all coefficients $v_{y}$ are integers.
\end{remark} 

\begin{remark}\label{remark: UFD equivalence}
In particular, when $k[C_{0}]$ is a UFD, Lemma~\ref{lem: bounded ry} shows that, in order to achieve a classification of $G$-normal affine curves over a smooth affine curve it suffices to consider pp-divisors whose polyhedra satisfy
\begin{equation}\label{equation: restriction ry}
0 < r_y < \lvert\mathrm{Stab}_{T}(y)\rvert\quad 
\text{and} \quad \gcd(r_y,|\mathrm{Stab}_{T}(y)|)=1,
\end{equation}
since the coefficient can always be considered positive over the branch locus.
\end{remark}

\smallskip

The following finiteness result is a particular case of \cite[Corollary~3.3.7]{Bri26}, and it is proved independently using Altmann--Hausen theory. 
Moreover, we provide an upper bound for the number of $G$-isomorphism classes of $G$-normal affine curves over $C_0$ with a fixed branch locus (i.e., the locus over which it fails to be a $G$-torsor). 
Recall that for any curve $\tilde{C}$, the group $k[\tilde{C}]^{*}/k^*$ is finitely generated abelian (see \cite[Lemma, p.~28]{Ros57} or \cite[\href{https://stacks.math.columbia.edu/tag/04L8}{Tag 04L8}]{stacks-project}). 
Consequently, if $k^{*}=(k^{*})^{d}$, then
$k[\tilde{C}]^{*}/\{f^{d}\mid f\in k[\tilde{C}]^{*}\}$
is a finite abelian group.

\begin{proposition}\label{prop: boundedness affine case}
As before, let $C_0$ be a smooth affine curve, let $G=\mu_d$ be a finite subgroup scheme of $T=\Gm$, and assume that $k^{*}=(k^{*})^{d}$.
Then there are only finitely many $G$-isomorphism classes of $G$-normal affine curves over $C_0$ with branch locus $Z$. Moreover, their number is bounded by
\[
N:=
\left|(k[C_{0}\setminus Z]^{*}/k[C_{0}]^{*})/
(k[C_{0}\setminus Z]^{*}/k[C_{0}]^{*})^{d}\right|
\cdot
\left|k[C_0]^{*}/(k[C_0]^{*})^{d}\right|.
\]
If, in addition, $k[C_0]$ is a UFD, then one obtains the simpler bound
\[
N':=(d-1)^n\cdot
\left|k[C_0]^{*}/(k[C_0]^{*})^{d}\right|,
\]
where $n$ is the cardinality of $|Z|$.
\end{proposition}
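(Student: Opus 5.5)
By Theorem~\ref{th: affine refinement}, it suffices to bound the number of isomorphism classes of geometrically integral $\Phi$-pairs $(\D,\alpha)$ over $C_0$ with $\Supp(\D)=Z$. We fix the identity on $C_0$ and only use isomorphisms of the form $(\mathrm{id},\mathfrak f)$; this can only overcount, so it still gives an upper bound. The count splits into two factors: the number of pp-divisors up to isomorphism, and, for a fixed pp-divisor, the number of $\Phi$-structures up to the equivalence of Definition~\ref{def: pairs isomorphic}.

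\textbf{Counting pp-divisors.} By Lemma~\ref{lem: fiber polyhedrons}, $\D=\sum_{y\in Z}\{v_y\}\otimes y$. By Lemma~\ref{lem: symmetry of pp-div}~\ref{item iii: proposition: symmetry of pp-div}, $d\D(1)=\D(d)=\div(h)$ for some $h\in k(C_0)^*$.
\begin{itemize}
\item Since $\Supp\D(d)\subset Z$, the function $h$ is an invertible regular function on $C_0\setminus Z$.
\item By Lemma~\ref{lem: bounded ry}, $\D$ is determined up to isomorphism by the class of $\D(1)$ modulo principal divisors.
\item Two divisors $\D$ and $\D'$ with the same class $[h]$ in $k[C_0\setminus Z]^*/k[C_0]^*$ modulo $d$-th powers differ as follows. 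If $h'=h\,u\,g^d$ with $u\in k[C_0]^*$ and $g\in k[C_0\setminus Z]^*$, then $d(\D'-\D)(1)=d\,\div(g)$. Since divisors on the curve are torsion-free, $(\D'-\D)(1)=\div(g)$ is principal, so $\D\simeq\D'$.
\end{itemize}
Hence $\D\mapsto[h]$ is an injection from isomorphism classes into $(k[C_0\setminus Z]^*/k[C_0]^*)/(\cdot)^d$. It is well defined because $h$ is determined up to $k[C_0]^*$ once $\D(d)$ is fixed, and changing $\D$ by $\div(g)$ changes $h$ by $g^d$. This gives the first factor of $N$.

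\textbf{Counting $\Phi$-structures.} Fix $\D$. By Lemma~\ref{lem: symmetry of pp-div}~\ref{item iv: proposition: symmetry of pp-div}, any two $\Phi$-structures have the same divisor, so they differ by an element of $k[C_0]^*$. By Definition~\ref{def: pairs isomorphic} with $\psi=\mathrm{id}$, $\alpha$ and $u f^d\alpha$ are isomorphic exactly when $u\in (k[C_0]^*)^d$, up to the extra $f$. So the $\Phi$-structures up to isomorphism form a quotient of a torsor under $k[C_0]^*/(k[C_0]^*)^d$. This group is finite because $k[C_0]^*/k^*$ is finitely generated and $k^*=(k^*)^d$. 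Multiplying the two factors gives $N$, and finiteness follows.

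\textbf{UFD case.} When $k[C_0]$ is a UFD, we instead use Remark~\ref{remark: UFD equivalence}. By Lemma~\ref{lem: bounded ry}, the isomorphism class of $\D$ is determined by the residues $r_y$ with $0<r_y<|\mathrm{Stab}_T(y)|$, $\gcd(r_y,|\mathrm{Stab}_T(y)|)=1$, for $y\in Z$. Since $|\mathrm{Stab}_T(y)|$ divides $d$ and $v_y\notin\Z$, each $v_y$ mod $\Z$ lies in $\tfrac1d\Z/\Z\setminus\{0\}$, which has $d-1$ elements. This gives at most $(d-1)^n$ pp-divisors, and multiplying by the $\Phi$-structure count gives $N'$.

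The main delicate point is the injectivity argument in the first count. One must check that $\div(h)$ supported in $Z$ forces $h\in k[C_0\setminus Z]^*$, and that divisibility by $d$ in the group of divisors lets us pass from $d$-th powers of functions back to principality of $(\D'-\D)(1)$. The remaining steps are direct applications of the lemmas.
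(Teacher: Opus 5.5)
Your decomposition is the paper's own. You inject isomorphism classes of pp-divisors into $(k[C_0\setminus Z]^*/k[C_0]^*)/(\cdot)^d$ via $\D(d)=\div(h)$ and Lemma~\ref{lem: bounded ry}, then count $\Phi$-structures through $k[C_0]^*/(k[C_0]^*)^d$, then use Remark~\ref{remark: UFD equivalence} for the UFD case.

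The gap is in the first count, which rests on the claim $\D=\sum_{y\in Z}\{v_y\}\otimes y$. The paper makes the same step, via Remark~\ref{remark: correspondence branch locus}, but it does not hold in general. For $y\notin Z$ the stabilizer is trivial, so Lemma~\ref{lem: fiber polyhedrons} only gives $v_y\in\Z$, not $v_y=0$. To discard these coefficients you would pass to the isomorphic pp-divisor $\D+\div(g)$. That requires the integral divisor $E:=\sum_{y\notin Z}v_y\,y$ to be principal on $C_0\setminus Z$, whereas you only know that $dE|_{C_0\setminus Z}=\D(d)|_{C_0\setminus Z}$ is principal. The obstruction is the class of $E|_{C_0\setminus Z}$ in $\Pic(C_0\setminus Z)[d]$. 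When this class is nonzero, $\div(h)$ is not supported in $Z$, so $h\notin k[C_0\setminus Z]^*$ and your map is undefined on that class. The point you flag as delicate (that $\Supp\div(h)\subset Z$ forces $h\in k[C_0\setminus Z]^*$) is trivial; the real question is whether $\Supp\div(h)\subset Z$ at all.

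This cannot be patched, because the bound $N$ itself fails. Take $k$ algebraically closed with $p\neq 2$, $d=2$, an elliptic curve $E$ with $\Aut(E,O)=\{\pm1\}$, $C_0:=E\setminus\{O\}$ and $Z=\varnothing$. Then $k[C_0]^*=k^*$, so $N=1$.

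For each $Q\in E[2]\setminus\{O\}$, the pp-divisor $\{1\}\otimes\{Q\}$ has $\{Q\}$ non-principal and $2\{Q\}$ principal on $C_0$. By Corollary~\ref{corollary: integral pair non non-constant regular affine} it carries a geometrically integral $\Phi$-pair. The resulting curve is a connected \'etale $\mu_2$-cover of $C_0$, since the coefficient is integral and all stabilizers are trivial, so its branch locus is empty. By Lemma~\ref{lem: bounded ry} these three are pairwise non-isomorphic, because $\Aut(C_0)=\{\pm1\}$ fixes $E[2]$ pointwise.

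What your argument actually proves is the bound $N$ under the extra hypothesis $\Pic(C_0\setminus Z)[d]=0$. In general, classes of pp-divisors with branch locus in $Z$ form a group mapping to $\Pic(C_0\setminus Z)[d]$. Its kernel injects into $(k[C_0\setminus Z]^*/k[C_0]^*)/(\cdot)^d$ by exactly your argument, so $N$ must be multiplied by $|\Pic(C_0\setminus Z)[d]|$. Finiteness still holds. Your UFD count is correct as written: there Lemma~\ref{lem: bounded ry} shows that the class of $\D$ depends only on $(v_y \bmod \Z)_y$, so integral coefficients off $Z$ are irrelevant and $N'$ holds.
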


\begin{proof}
	Fixing the branch locus of a $G$-normal affine curve over $C_0$ amounts to fixing the support of its associated pp-divisor $\D$ on $C_0$ (see Remark~\ref{remark: correspondence branch locus}).
	Let us set $U_{0}:=C_{0}\setminus Z$.

By Lemma~\ref{lem: symmetry of pp-div}, there is a map
\[
\mathcal{C}\colon G\text{-}\PPDiv_{\Q}(C_{0},\{0\})\longrightarrow k(C_{0})^{*}/k[C_{0}]^{*},
\]
given by $\D\mapsto \D(d)$, where $k(C_{0})^{*}/k[C_{0}]^{*}$ is identified with the group of principal Cartier divisors on $C_{0}$. Moreover, if two elements of $G\text{-}\PPDiv_{\Q}(C_{0},\{0\})$ define the same divisor, then they have the same coefficients. Hence the map $\mathcal{C}$ is injective.

Given a pp-divisor $\D\in G\text{-}\PPDiv_{\Q}(C_{0},\{0\})$ with support $Z$, the divisor $\D(d)|_{U_{0}}$ is principal. Thus, we are interested in the pp-divisors belonging to
$\mathcal{C}^{-1}(\ker(\gamma))$,
where
\[
\gamma\colon k(C_{0})^{*}/k[C_{0}]^{*}\longrightarrow k(U_{0})^{*}/k[U_{0}]^{*}
\]
denotes the natural restriction map.

     From the exact sequence
\[
\xymatrix{
1 \ar[r] & k[C_{0}]^{*} \ar[r] \ar[d]_{\alpha}
& k(C_{0})^{*} \ar[r] \ar[d]_{\beta}
& k(C_{0})^{*}/k[C_{0}]^{*} \ar[r] \ar[d]_{\gamma}
& 1 \\
1 \ar[r] & k[U_{0}]^{*} \ar[r]
& k(U_{0})^{*} \ar[r]
& k(U_{0})^{*}/k[U_{0}]^{*} \ar[r]
& 1,
}
\]
the Snake Lemma yields
\[
1 \to \ker(\alpha) \to \ker(\beta) \to \ker(\gamma)
\to \mathrm{coker}(\alpha) \to \mathrm{coker}(\beta)
\to \mathrm{coker}(\gamma) \to 1.
\]
Since $\alpha$ is injective and $\beta$ is an isomorphism, this sequence simplifies to
\[
1 \to \ker(\gamma)
\to k[U_{0}]^{*}/k[C_{0}]^{*}
\to 1
\to \mathrm{coker}(\gamma)
\to 1.
\]
Hence,
\[
\ker(\gamma)\simeq k[U_{0}]^{*}/k[C_{0}]^{*}.
\]
Moreover, the latter is a finitely generated abelian group, since $k[U_{0}]^{*}/k^{*}$ is free of finite rank.

By Lemma~\ref{lem: bounded ry}, two pp-divisors $\D$ and $\D'$ with support $Z$ are isomorphic if and only if $(\D-\D')(1)$ is a principal divisor; equivalently, $(\D-\D')(d)$ belongs to $(k(C_{0})^{*}/k[C_{0}]^{*})^{d}$. Under the above identification, this means that the corresponding element of $\ker(\gamma)$ is a $d$-th power. Consequently, the number of isomorphism classes of pp-divisors supported on $Z$ is bounded by
\[
\left|
(k[U_{0}]^{*}/k[C_{0}]^{*})
\Big/
(k[U_{0}]^{*}/k[C_{0}]^{*})^{d}
\right|,
\]
which is finite because $k[U_{0}]^{*}/k[C_{0}]^{*}$ is free of finite rank.

   Assume that $k[C_{0}]$ is a UFD. By Lemma~\ref{lem: bounded ry} and Remark~\ref{remark: UFD equivalence}, it suffices to consider pp-divisors whose coefficients satisfy
\begin{equation*}
0<r_y<|\Stab_{T}(y)|
\quad\text{and}\quad
\gcd(r_y,|\Stab_{T}(y)|)=1,
\end{equation*}
the coefficients being chosen positive.
Since $|\Stab_{T}(y)|$ divides $d$ for every $y\in Z$, each coefficient admits at most $d-1$ possible values. It follows that the number of isomorphism classes of pp-divisors supported on $Z$ is at most $(d-1)^n$,
where $n$ is the cardinality of $|Z|$.

	Let $(\D,\alpha_{1})$ and $(\D,\alpha_{2})$ be two geometrically integral $\Phi$-pairs. Then $\alpha_{1}/\alpha_{2}\in k[C_0]^{*}$. For a fixed pp-divisor $\D$, all $\Phi$-structures on $\D$ are therefore parametrized by $k[C_0]^{*}$. Two such structures define $G$-isomorphic $G$-normal affine curves over $C_0$ if and only if $\alpha_{1}/\alpha_{2}\in (k[C_0]^*)^{{{d}}}$. Consequently, for each $\D$ there are at most $\left| k[C_0]^{*}/ (k[C_0]^*)^{{{d}}}\right|$ 	distinct $G$-isomorphism classes. The stated bound follows.
\end{proof}

\begin{remark}\label{remark: boundedness affine case}
When $C_{0}\simeq \A^{1}$, the bound is never reached. Indeed, for a pp-divisors $\D$ admitting a geometrically integral $\Phi$-pair, Corollary~\ref{corollary: integral pair non non-constant regular affine} implies that $d$ is the smaller integer such that $\D(d)$ is principal. Hence, Lemma~\ref{lem: symmetry of pp-div}~\ref{item v: proposition: symmetry of pp-div} implies that $\mathrm{lcm}(\{|\Stab_{T}(y)|\mid y\in \A^{1}\})=d$. Then, $$\mathrm{lcm}(\{|\Stab_{T}(y)|\mid y\in \A^{1}\})=d$$ is a necessary condition for a pp-divisor over $\A^{1}$ to admit a geometrically integral $\Phi$-pair.
\end{remark}

\begin{remark}\label{remark: finiteness in higher-dimensional G-varieties}
The finiteness part in Proposition~\ref{prop: boundedness affine case} remains for higher-dimensional $G$-varieties (see \cite[Corollary 3.3.7]{Bri26}). It is worth noting, however, that it fails over imperfect fields; see the discussion between \cite[Corollary 3.3.7]{Bri26} and \cite[Remark 3.3.8]{Bri26}.
\end{remark}

\subsection{Explicit construction of \texorpdfstring{$X(\D)$}{X(D)} from a pp-divisor}\label{sec: Explicit construction of XD from a pp-divisor and examples}
Let us explain how to construct the normal $T$-surface associated with a given pp-divisor $\D\in\PPDiv(C_0,\{0\})$. Assume that
\begin{equation} \label{eq: explicit construction of X(D)}
\D= \left[\frac{r_{1}}{{{d}}_{1}}\right]\otimes\{c_{1}\}+\cdots+\left[\frac{r_{e}}{{{d}}_{e}}\right]\otimes\{c_{e}\},
\end{equation}
where the $c_{j}$ are $k$-points of $C_0$, $\gcd(r_{i},d_{i})=1$, each ${{d}}_{j}$ divides ${{d}}$, and $d_{j}>0$ (see \eqref{equation: restriction ry}). Define
\[
l_{s,j}:=\min\left\{ l\in\Z \mid l |r_{j}|\ge s {{d}}_{j} \text{ and } l\le d_{j} \right\}.
\]
Let $f_{j}\in \mathscr{O}_{C_0,c_{j}}$ be a generator of the maximal ideal. Then the associated $M$-graded $k$-algebra is
\[
A[C_0,\D]
=\bigoplus_{m\in M}\H^{0}(C_0,\mathscr{O}_{C_0}(\D(m)))
=\bigoplus_{m\in M}
\frac{k[C_0]}{
f_{1}^{\left\lfloor \frac{m r_{1}}{{{d}}_{1}} \right\rfloor}
\cdots
f_{e}^{\left\lfloor \frac{m r_{e}}{{{d}}_{e}} \right\rfloor}
}\,\mathfrak{X}_{m}.
\]
and it is generated, as a $k[C_{0}]$-algebra, by the elements:
\[
X:=\mathfrak{X}_{1}, \quad
X_{l_{s,j}}:=
\frac{\mathfrak{X}_{l_{s,j}}}{
f_{1}^{\left\lfloor \frac{l_{s,j} r_{1}}{{{d}}_{1}} \right\rfloor}
\cdots
f_{e}^{\left\lfloor \frac{l_{s,j} r_{e}}{{{d}}_{e}} \right\rfloor}
},
\quad
A:=
\frac{\mathfrak{X}_{{{d}}}}{
f_{1}^{\frac{{{d}} r_{1}}{{{d}}_{1}}}
\cdots
f_{e}^{\frac{{{d}} r_{e}}{{{d}}_{e}}}
},
\]
and $B:=A^{-1}$. 
Then the $k[C_{0}]$-algebra homomorphism
\[
\eta:k[C_{0}]\big[x,\dots,x_{l_{s,j}},\dots,a,b\big]\longrightarrow A[C_{0},\D],
\]
defined by
\[
x\mapsto X,\qquad
x_{l_{s,j}}\mapsto X_{l_{s,j}},\qquad
a\mapsto A,\qquad
b\mapsto B,
\]
is surjective. Hence
\begin{equation}\label{equation: construction algebra from pp-div}
A[C_0,\D]\simeq
\frac{
k[C_0]\big[x,\dots,x_{l_{s,j}},\dots,a,b\big]
}{\ker(\eta)
}.
\end{equation}
Moreover, for instance,
\[
\big(
\dots,\,
x^{l_{s,j}}-
f_{1}^{\left\lfloor \frac{l_{s,j} r_{1}}{{{d}}_{1}} \right\rfloor}
\cdots
f_{e}^{\left\lfloor \frac{l_{s,j} r_{e}}{{{d}}_{e}} \right\rfloor}
x_{l_{s,j}},\,
\dots,\,
x^{{{d}}}-a\,
f_{1}^{\frac{{{d}} r_{1}}{{{d}}_{1}}}
\cdots
f_{e}^{\frac{{{d}} r_{e}}{{{d}}_{e}}},\,
ab-1
\big)
\subset \ker(\eta).
\]

\begin{remark}\label{remark: canonical Phi-structure}
From such a pp-divisor, one obtains a canonical $\Phi$-structure, namely
$\alpha_{\mathrm{can}}:=a$.
Moreover, every $\Phi$-structure is of the form $h\alpha_{\mathrm{can}}$ for some $h\in k[C_0]^*$. Furthermore, every $\Phi$-structure admits an integral representative $(\D,h\alpha_{\mathrm{can}})$ with
$h\notin (k[C_0]^*)^d$.
\end{remark}

\begin{example}\label{example: 3/8}
Let $G=\mu_{8}$ with $\mathrm{char}(k)=p \geq 2$.
From the pp-divisor $\D =\left[\frac{3}{8}\right]\otimes\{0\}$
over $C_0:=\mathbb{A}^1_k$ we obtain the smooth affine $T$-surface
\[
X(\D )= \operatorname{Spec}\!\left(
\frac{k[u,y,y_3,a,b]}{(y^3-uy_3,\; y^8-a u^3,\; ay-y_{3}^{3},\;ab-1)} 
\right) = \operatorname{Spec}\!\left(
\frac{k[u,y_3,a,b]}{(y_{3}^8-a^{3} u,\;ab-1)}
\right).
\]
This gives rise, with respect to the canonical $\Phi$-structure $\alpha_{\mathrm{can}}=a$, to the $G$-normal affine curve
\[
U = \operatorname{Spec}\!\left(\frac{
k[u,y_3]}{(y_{3}^8-u)}
\right) = \operatorname{Spec}\!\left(
k[y_3]
\right) = \A_{k}^{1},
\]
endowed with the $G$-action defined by
\[
g\cdot y_3=g^{3}y_3.
\]
This is the only $G$-normal affine curve associated with $\D$ (up to $G$-isomorphism), by Proposition~\ref{prop: pais D alpha}.
\end{example}

\section{Equivariantly normal projective curves and their divisorial fans}\label{sec: projective section}
In this section, we assume that the ground field $k$ is perfect and, in the proof of Theorem~\ref{theorem: finiteness of En}, algebraically closed. We then describe the divisorial fans associated with equivariantly normal projective curves. More precisely, let $\overline{C}$ be a smooth projective curve, and let $G=\mu_{{d}} \subset T=\Gm$ be a finite subgroup scheme with $d \geq 2$.
By Proposition~\ref{prop: second reduction}, there is a one-to-one correspondence
\[ {\small \hspace{-4mm}
\mathcal{E}:=\left\{
\begin{array}{c}
\text{$G$-isomorphism classes of $G$-normal} \\
\text{projective curves with quotient $\overline{C}$}
\end{array}
\right\}
\;\overset{1:1}{\longleftrightarrow}\;
\mathcal{T}:=
\left\{ \begin{array}{c} \text{$T$-isomorphism classes of triples $(S, \Phi, \pi)$}\\ \text{where $S$ is a smooth $T$-surface and} \\ \begin{tikzcd}[row sep=1em, column sep=2em] & S \arrow[dl, two heads, "\text{$T$-eq.}"', "\Phi" near end] \arrow[dr, two heads, "\text{$/T$}"', "\pi" near end, swap] & \\ T/G & & \overline{C} \end{tikzcd}\\ \text{with $\Phi^{-1}(\{1\})$ is geometrically integral} \end{array} \right\}.}
\]
As in the affine setting (see Section~\ref{section: affine G-curves}), we first use Altmann–Hausen--S\"u\ss{} theory to replace $\mathcal{T}$ by a purely combinatorial set $\mathcal{S}$ described in terms of divisorial fans (see Theorem~\ref{th: main result AHS descritpion}). 
Then we show that there are only finitely many $G$-isomorphism classes of $G$-normal curves over $\overline{C}$ with a fixed branch locus, and we provide an explicit upper bound (Proposition~\ref{proposition: finite number fixed branch locus}). We also determine when the set $\E_n$, defined by \eqref{eq: def of En}  in the introduction, is finite or infinite (Theorem~\ref{theorem: finiteness of En}).
Finally, in Section~\ref{sec: complete description of E2 and E3}, we give a complete description of the sets $\E_{n}$ over $\overline{C}=\P^1$, when $n\geq 2$.

\subsection{Combinatorial description in the projective case}\label{sec: projective case}
Any normal complexity-one $T$-variety arises from a divisorial fan $\Sd$ over a smooth projective curve \cite{AHS08,GaryAHS}. Thus, the $T$-surface $S$ can be described in terms of such combinatorial data. However, an additional datum is required to encode the $T$-equivariant morphism $\Phi\colon S \to T/G$ whose fiber over the base point is geometrically integral.

As in the affine case (Proposition~\ref{prop: invertible element}), the $T$-equivariant morphism $\Phi:C^{\#}\to T/G$ can be incorporated into the combinatorial description by adjoining a further datum.

\begin{definition} \label{def: Phi structure proj}
For a divisorial fan $\Sd\in \mathrm{FDiv}_{\Q}(\overline{C},\{0\})$, an element $\beta\in k(\overline{C})^{*}$ such that
\[
\div(\beta|_{\Loc(\D)})=-\D({{d}})
\quad\text{for every }\D\in\Sd,
\]
is called a \emph{$\Phi$-structure}. 
A pair $(\Sd,\beta)$, where $\beta$ is a $\Phi$-structure, is called a $\Phi$-\emph{pair} over $\overline{C}$.
\end{definition}

Let $(\Sd,\beta)$ be a $\Phi$-pair over $\overline{C}$, and denote by $\alpha_{\D}$ the restriction of $\beta$ to $\Loc(\D)$. Then, for every $\D\in\Sd$, the pair $(\D,\alpha_{\D})$ is a $\Phi$-pair over $\Loc(\D)$ in the sense of Definition~\ref{def: Phi structure}. By Proposition~\ref{prop: invertible element}, this yields a $T$-equivariant morphism
\[
\Phi_{\alpha_{\D}}:X(\D)\to T/G.
\]
Since the transition maps are trivial, for every pair of pp-divisors $\D,\Ed\in\Sd$, the morphisms $\Phi_{\alpha_{\D}}$, $\Phi_{\alpha_{\Ed}}$, and $\Phi_{\alpha_{\D\cap\Ed}}$ are compatible with the $T$-equivariant open embeddings
\[
X(\D\cap\Ed)\hookrightarrow X(\D)
\quad\text{and}\quad
X(\D\cap\Ed) \hookrightarrow X(\Ed).
\]
Hence, they glue to define a global $T$-equivariant morphism
\[
\Phi_{\beta}:X(\Sd)\to T/G.
\]

\begin{proposition}\label{prop: invertible element divisorial fan} 
We keep the previous notation.
\begin{enumerate}
\item\label{proposition: invertible element divisorial fan part i}
Let $C$ be a  $G$-normal projective curve with quotient $\overline{C}$. 
 Then there exists a $\Phi$-pair $(\Sd,\beta)$ over $\overline{C}$ encoding the data
$T/G \overset{\Phi}{\twoheadleftarrow} C^{\#} \overset{\pi}{\twoheadrightarrow} \overline{C}$.
\item\label{proposition: invertible element divisorial fan part ii}
Conversely, given a $\Phi$-pair $(\Sd,\beta)$ over $\overline{C}$ such that $\Phi_{\beta}^{-1}(\{1\})$ is geometrically integral, there exists a  $G$-normal projective curve $C$ with quotient $\overline{C}$ realizing $(\Sd,\beta)$ as above.
\end{enumerate}
\end{proposition}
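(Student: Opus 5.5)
The plan is to mirror the proof of Proposition~\ref{prop: invertible element}, working chart by chart on the divisorial fan. For part~\ref{proposition: invertible element divisorial fan part i}, start from the $G$-normal projective curve $C$ and form the smooth $T$-surface $C^{\#}=T\times^G C$ with $\Phi\colon C^{\#}\to T/G$ and $\pi\colon C^{\#}\to\overline{C}$ (Proposition~\ref{prop: second reduction}). By \cite{AHS08,GaryAHS} there is a divisorial fan $(\Sd,\overline{C})$ with $C^{\#}\simeq X(\Sd,\overline{C})$. Each $X(\D)$, $\D\in\Sd$, is a $T$-stable affine open subset of $C^{\#}$ over $\Loc(\D)$. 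Restricting $\Phi$ to $X(\D)$ gives a surjective $T$-equivariant morphism $X(\D)\to T/G$. Exactly as in Proposition~\ref{prop: invertible element}~\ref{item: lemma invertible element i}, this forces the tail cone of $\D$ to be $\{0\}$ (so the tail fan is $\{0\}$) and produces $\alpha_{\D}:=\Phi^*(x)\in A[\Loc(\D),\D]^*$ of degree $d$. By Lemma~\ref{lem: symmetry of pp-div}~\ref{item iv: proposition: symmetry of pp-div}, applied on the affine curve $\Loc(\D)$, we then get $\div(\alpha_{\D})=-\D(d)$ on $\Loc(\D)$.

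Next I glue the local functions $\alpha_{\D}$ into a single rational function $\beta$. All $X(\D)$ are open in the irreducible surface $C^{\#}$, and the gluing maps $X(\D\cap\Ed)\hookrightarrow X(\D)$ are induced by the trivial plurifunction. Hence the degree-$d$ pieces $\H^0(\Loc(\D),\mathscr{O}(\D(d)))$ all sit inside the same copy $k(\overline{C})\,\mathfrak{X}_d$ of the degree-$d$ part of the function field $k(C^{\#})$. The elements $\alpha_{\D}$ are all restrictions of the single homogeneous rational function $\Phi^*(x)\in k(C^{\#})$, so they coincide as elements of $k(\overline{C})^*$. I call this common element $\beta$; by the previous step it satisfies $\div(\beta|_{\Loc(\D)})=-\D(d)$ for every $\D\in\Sd$, so it is a $\Phi$-structure. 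By construction $\Phi_\beta$ agrees with $\Phi$ on each chart, hence globally. This is the step I expect to need the most care: one must make sure that a common trivialization of the $T$-varieties is chosen so that the charts really share the same $\mathfrak X_d$. If the divisorial fan produced by \cite{GaryAHS} only glues up to nontrivial plurifunctions, I would first normalize it using the fact that the transition maps of a divisorial fan are the identity on $Y$ with trivial plurifunction.

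For part~\ref{proposition: invertible element divisorial fan part ii}, the gluing described before the statement yields $\Phi_\beta\colon X(\Sd)\to T/G$. Set $C:=\Phi_\beta^{-1}(\{1\})$, which is geometrically integral by hypothesis. I would argue as follows.
\begin{enumerate}
\item $X(\Sd)$ is smooth. Each chart $X(\D)$ has tail $\{0\}$ and admits the invertible degree-$d$ element $\alpha_{\D}$, so $X(\D)\simeq T\times^G(\Phi_{\alpha_\D}^{-1}(1))$ by \cite[Lemma~D.6]{Bri26}. The fiber over $1$ is open in $C$, hence integral, and it is $G$-normal by Proposition~\ref{prop: invertible element} and \cite[Corollary~2.1.5]{Bri26}. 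Smoothness then follows from \cite[Proposition~4.17]{Bri24a}.
\item $C$ is separated, since it is a closed subscheme of the prevariety $X(\Sd)$ covered by the affine opens $C\cap X(\D)$ over the separated base $\overline{C}$.
\item The quotient $C\to C/G\simeq X(\Sd)/T\simeq\Loc(\Sd)$ is finite, so $C$ is proper over $\Loc(\Sd)$.
\end{enumerate}
For $C$ to be projective with quotient $\overline{C}$, one needs $\Loc(\Sd)=\overline{C}$; I would take this as part of what "$\Phi$-pair over $\overline{C}$" means, or deduce it from the fact that empty coefficients are impossible once $\beta$ must satisfy the divisor condition everywhere. Then $C$ is a proper curve, hence projective. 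Finally, Proposition~\ref{prop: second reduction} applied to $X(\Sd)\simeq T\times^G C$ shows that $C$ is $G$-normal with quotient $\overline{C}$ and realizes $(\Sd,\beta)$.
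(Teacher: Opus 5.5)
Your part~(i) is the paper's argument. You restrict $\Phi$ to the charts $X(\D)\subset C^{\#}$, take $\alpha_{\D}=\Phi^{*}(x)$ with $\div(\alpha_{\D})=-\D(d)$ by Lemma~\ref{lem: symmetry of pp-div}, and glue these into $\beta$ using that the face embeddings carry the trivial plurifunction. For part~(ii) the paper only does what your final sentence does: it glues the $\Phi_{\alpha_{\D}}$ into $\Phi_{\beta}$ and invokes Proposition~\ref{prop: second reduction}. So the problems are in the verifications you add.

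The separatedness step is invalid as written. Being covered by affine opens and mapping to a separated base does not make a prevariety separated (think of the line with doubled origin over $\A^{1}$). Divisorial fans with tail $\{0\}$ really do give non-separated $X(\Sd)$, e.g.\ $\{[\tfrac12]\otimes\{0\}+\varnothing\otimes\{\infty\},\ [\tfrac13]\otimes\{0\}+\varnothing\otimes\{\infty\},\ \varnothing\otimes\{0\}+\varnothing\otimes\{\infty\}\}$ on $\P^{1}$. The missing idea is that the $\Phi$-structure rules this out. For $y\in\Loc(\D)\cap\Loc(\Ed)$ one has $-d\,v^{\D}_{y}=\mathrm{ord}_{y}(\beta)=-d\,v^{\Ed}_{y}$, so the point coefficients agree. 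Hence $\Loc(\D\cap\Ed)=\Loc(\D)\cap\Loc(\Ed)$, each $X(\D)$ is the full preimage of $\Loc(\D)$, and $\pi\colon X(\Sd)\to\Loc(\Sd)$ is affine. This gives separatedness. It is also what your step~(3) needs: otherwise finiteness of $C\to\Loc(\Sd)$ is only known chart by chart and does not globalize.

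Your alternative justification of $\Loc(\Sd)=\overline{C}$ is wrong. The divisor condition is imposed only on each $\Loc(\D)$, and empty coefficients genuinely occur (as in $\Sd_{0,\infty}$). For example, take $\Sd=\{\varnothing\otimes\{0\}+\varnothing\otimes\{\infty\}\}$ on $\P^{1}$ with $\beta=u$. Then $X(\Sd)=\Gm\times T$ and the fiber $\{(u,t)\mid u\,t^{d}=1\}\simeq\Gm$ is a geometrically integral $G$-normal affine curve with quotient $\Gm$. So the condition has to be part of the definition, which is your first option. The paper tacitly assumes this too (``Notice that $\Loc(\Sd)=\overline{C}$'' in the proof of Proposition~\ref{proposition: reduction to three pp-div}).

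Finally, in step~(1) the citation of \cite[Corollary~2.1.5]{Bri26} runs backwards, since that result derives $G$-normality from smoothness of $T\times^{G}C$. Either rely on Proposition~\ref{prop: invertible element}~\ref{item: lemma invertible element ii} alone, or check smoothness directly: with point coefficients over a smooth curve, $X(\D)$ is, near each point, \'etale over the toric surface of a half-plane, i.e.\ over $\A^{1}\times\Gm$.
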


\begin{proof}
Let $(\Sd,\overline{C})$ be such that $X(\Sd)\simeq C^{\#}$ as $T$-varieties. The associated divisorial fan comes with $T$-equivariant morphisms
\[
\Phi_{\D}:X(\D)\to T/G,
\qquad \D\in\Sd,
\]
induced by the corresponding $T$-equivariant open embeddings $X(\D)\to C^{\#}$. For every $\D,\Ed\in\Sd$, these morphisms fit into the commutative diagram
\[
\xymatrix{
X(\D) \ar[r]^{\Phi_{\D}} & T/G \\
X(\D\cap\Ed) \ar[u] \ar[ru]_{\Phi_{\D\cap\Ed}} \ar[r] & X(\Ed) \ar[u]_{\Phi_{\Ed}}
}
\]
Dualizing, we obtain
\[
\xymatrix{
A[\Loc(\D),\D] \ar[r] & A[\Loc(\D\cap\Ed),\D\cap\Ed] \\
k[M^{G}] \ar[u]^{\Phi^{*}_{\D}} \ar[ru]_{\Phi^{*}_{\D\cap\Ed}} \ar[r]_{\Phi^{*}_{\Ed}} & A[\Loc(\Ed),\Ed] \ar[u]
}
\]
where the horizontal and right vertical arrows are the corresponding restriction morphisms.

For each $\D\in\Sd$, Proposition~\ref{prop: invertible element} and Lemma~\ref{lem: symmetry of pp-div}~\ref{item iv: proposition: symmetry of pp-div} yield a $\Phi$-structure $\alpha_{\D}$ satisfying
$\div(\alpha_{\D})=-\D({{d}})$.
By commutativity, we have
$\alpha_{\D}|_{\Loc(\D\cap\Ed)}
=
\alpha_{\Ed}|_{\Loc(\D\cap\Ed)}$
for every pair $\D,\Ed\in\Sd$. Hence, the collection $\{(\Loc(\D),\alpha_{\D})\}_{\D\in\Sd}$ corresponds to a rational map $\beta\in k(\overline{C})^{*}$. Therefore, the morphisms $\Phi_{\D}$ glue to a global morphism $\Phi_{\beta}$, which coincides with $\Phi$. This proves Part~\ref{proposition: invertible element divisorial fan part i}.

Conversely, let $(\Sd,\beta)$ be a $\Phi$-pair as above. The $M$-graded monomorphisms
\[
\Phi_{\D}^{*}:k[M^{G}]\to A[\overline{C},\D],
\qquad \D\in\Sd,
\]
fit into the commutative diagram
\[
\xymatrix{
A[\overline{C},\D] \ar[r] & A[\overline{C},\D\cap\Ed] \\
k[M^{G}] \ar[u]^{\Phi^{*}_{\D}} \ar[ru]_{\Phi^{*}_{\D\cap\Ed}} \ar[r]_{\Phi^{*}_{\Ed}} & A[\overline{C},\Ed] \ar[u]
}
\]
Dualizing and gluing, we obtain a $T$-equivariant morphism
$\Phi_{\beta}:X(\Sd)\to T/G$
such that $\Phi_{\beta}^{-1}(\{1\})$ is integral. By Proposition~\ref{prop: second reduction}, there exists a  $G$-normal projective curve $C$ such that
$C^{\#}\simeq X(\Sd)$
as $T$-varieties.
\end{proof}

\begin{remark}
Different $\Phi$-structures on a given divisorial fan may encode different $G$-normal curves, as already occurs in the affine case (see Example~\ref{example: same pp-div different curves}).
\end{remark}

\begin{definition}\label{def: divisorial pairs isomorphic}
Two $\Phi$-pairs $(\Sd,\beta)$ and $(\Sd',\beta')$ over $\overline{C}$ are said to be \emph{isomorphic} if there exist an automorphism $\psi\in\mathrm{Aut}(\overline{C})$ and a plurifunction $\mathfrak{f}\in k(N,\overline{C})^{*}$ such that the following holds: for any $\D\in\Sd$ and $\D'\in\Sd'$, there exist faces $\Ed\preceq \D$ and $\Ed'\preceq \D'$ such that
\[
(\psi,\mathfrak{f}|_{\Ed})\colon\Ed\to\Ed',\ \ \  \text{where}\  \mathfrak{f}_{|\Ed}\colon=\sum_{\div(f_{i})\subset\mathrm{Supp}(\Ed)}[n_{i}]\otimes f_{i},\]
is an isomorphism of pp-divisors, and
\[
\alpha_{\Ed}/\psi^{*}(\alpha'_{\Ed'})=\mathfrak{f}|_{\Ed}({{d}})=h_{\Ed}^{{{d}}},\ \text{where } h_{\Ed}:=\mathfrak{f}|_{\Ed}(1)\in k[\Loc(\Ed)]^{*}.
\]
\end{definition}

\begin{remark}
Notice that the pair $(\psi, \mathfrak{f})$ defined above is not necessarily a morphism of divisorial fans, since the faces $\Ed \preceq \D$ and $\Ed' \preceq \D'$ need not belong to the respective divisorial fans.
\end{remark}

\begin{proposition}\label{proposition: isomorphic divisorial pairs}
Let $(\Sd,\beta)$ and $(\Sd',\beta')$ be two $\Phi$-pairs over $\overline{C}$. Then they determine $G$-isomorphic $G$-normal separated irreducible schemes of dimension~$1$ if and only if $(\Sd,\beta)$ and $(\Sd',\beta')$ are isomorphic.
Moreover, the fiber $\Phi_{\beta_1}^{-1}(\{1\})$ is geometrically integral if and only if $\Phi_{\beta_2}^{-1}(\{1\})$ is geometrically integral.
\end{proposition}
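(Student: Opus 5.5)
The strategy is to reduce the statement to its affine counterpart, Proposition~\ref{prop: pais D alpha}, by working chart by chart on the divisorial fans.

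\emph{From a $G$-isomorphism to an isomorphism of $\Phi$-pairs.} Let $C$ and $C'$ be the fibres $\Phi_\beta^{-1}(\{1\})$ and $\Phi_{\beta'}^{-1}(\{1\})$, and suppose $\sigma\colon C\to C'$ is a $G$-isomorphism. Taking contracted products gives a $T$-isomorphism $F\colon X(\Sd)\simeq C^{\#}\to C'^{\#}\simeq X(\Sd')$, and $F$ satisfies $\Phi_{\beta'}\circ F=\Phi_\beta$. Since $F$ commutes with the rational quotients, it induces an automorphism $\psi\in\Aut(\overline{C})$. Fix $\D\in\Sd$ and $\D'\in\Sd'$, and set $W:=X(\D)\cap F^{-1}(X(\D'))$. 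This $W$ is a $T$-stable open affine subset: it is affine because $X(\Sd')$ is separated, being $C'^{\#}$ with $C'$ a separated scheme. By the Altmann--Hausen theory recalled in Section~\ref{sec: AHS theory reminder}, $W$ is described by a face $\Ed\preceq\D$, and likewise $F(W)$ by a face $\Ed'\preceq\D'$. The restriction $F|_W$ is then a $T$-isomorphism $X(\Ed)\to X(\Ed')$ compatible with $\Phi_{\alpha_\Ed}$ and $\Phi_{\alpha'_{\Ed'}}$. Proposition~\ref{prop: pais D alpha}, applied over $\Loc(\Ed)$, yields an isomorphism $(\psi,\mathfrak f_\Ed)$ of pp-divisors together with $h_\Ed\in k[\Loc(\Ed)]^*$ such that $\alpha_\Ed/\psi^*\alpha'_{\Ed'}=h_\Ed^{d}$. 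Note that the tail fans are trivial, so every plurifunction is just a single rational function $f$ with $\mathfrak f(m)=f^m$.

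\emph{Gluing to a single plurifunction.} The main obstacle is to show that the local plurifunctions $\mathfrak f_\Ed$ come from one global $\mathfrak f\in k(N,\overline{C})^*$, as Definition~\ref{def: divisorial pairs isomorphic} requires. I would argue through the generic fibre. Over the generic point of $\overline{C}$, the isomorphism $F$ is determined by its action on the degree-one semi-invariant rational functions. Concretely, $F^*\chi'^1=f\,\chi^1$ for a single $f\in k(\overline{C})^*$, where $\chi^1$ and $\chi'^1$ are the degree-one semi-invariants. This global $f$ restricts to $\mathfrak f_\Ed(1)$ on every chart, because on $X(\Ed)$ the pp-divisor isomorphism is read off from the comorphism on the same function field. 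Hence $\mathfrak f:=\{1\}\otimes f$ works simultaneously for all pairs $(\D,\D')$, and $h_\Ed=f|_{\Loc(\Ed)}$. The ratio identity then reads globally as $\beta/\psi^*\beta'=f^{d}$. It remains to check that the truncation $\mathfrak f|_\Ed$ appearing in the definition agrees with this restriction; this is bookkeeping about supports.

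\emph{Converse.} Suppose we are given $(\psi,\mathfrak f)$ as in Definition~\ref{def: divisorial pairs isomorphic}. On each pair of faces $(\Ed,\Ed')$, the converse direction of Proposition~\ref{prop: pais D alpha} produces $T$-isomorphisms $X(\Ed)\to X(\Ed')$ that intertwine the morphisms $\Phi$. All of these are induced by the same function-field map, namely $\psi^*$ together with $\chi'^1\mapsto f\chi^1$. They therefore agree on overlaps, and they glue because the sets $X(\Ed)$ cover $X(\Sd)$ and the sets $X(\Ed')$ cover $X(\Sd')$: for fixed $\D$, the faces $\Ed$ obtained as $\D'$ ranges over $\Sd'$ cover $X(\D)$. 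The result is a $T$-isomorphism $X(\Sd)\to X(\Sd')$ over $T/G$. Restricting it to the fibres over $1$ gives the required $G$-isomorphism $C\simeq C'$. Finally, geometric integrality is a property of the isomorphism class of the fibre, so it is preserved exactly as in the last step of the proof of Proposition~\ref{prop: pais D alpha}.
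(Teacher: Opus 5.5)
Your route is the paper's. For each pair $(\D,\D')$ you replace $X(\D)\cap F^{-1}(X(\D'))$ and its image by faces $\Ed\preceq\D$, $\Ed'\preceq\D'$. The existence of these faces is \cite[Proposition~3.13]{GaryAHS}, which the paper cites; it is not in the reminder of Section~\ref{sec: AHS theory reminder}. You then apply Proposition~\ref{prop: pais D alpha} chart by chart, glue the local plurifunctions, and glue local isomorphisms for the converse.

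What you add is worthwhile. Reading $F^{*}{\chi'}^{1}=f\chi^{1}$ off the common function field $k(\overline{C})(\chi^{1})$ produces one $f\in k(\overline{C})^{*}$ at once. This proves the compatibility $\mathfrak{f}_{\Ed_{ij}}|_{\Ed_{ab}}=\mathfrak{f}_{\Ed_{ab}}|_{\Ed_{ij}}$, which the paper only asserts, and it explains why the local isomorphisms in the converse agree on overlaps. It even makes your covering claim in the converse unnecessary, which matters because Definition~\ref{def: divisorial pairs isomorphic} does not guarantee it. A single chart gives the identity of rational functions $\beta=f^{d}\,\psi^{*}\beta'$ with $f:=\mathfrak{f}|_{\Ed}(1)$. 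Since $\D(d)=-\div(\beta|_{\Loc(\D)})$ for every $\D\in\Sd$, and likewise for $\Sd'$, this alone forces $\psi^{*}\D'=\D+\div(f)$ on $\Loc(\D)\cap\psi^{-1}(\Loc(\D'))$ for all $\D,\D'$.

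However, two ingredients do not hold as you use them, and the paper's proof has the same defect.

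\emph{The unit condition.} The loci $\Loc(\Ed)$ of your faces cover $\overline{C}$. So $h_{\Ed}=f|_{\Loc(\Ed)}\in k[\Loc(\Ed)]^{*}$ on every chart forces $f\in\mathscr{O}(\overline{C})^{*}=k^{*}$, hence $\psi^{*}\Ed'=\Ed$ everywhere. This is false in general. Let $u$ be an affine coordinate on $\P^{1}$ and take the $\Phi$-pairs $(\Sd_{u},u^{-1})$ and $(\Sd_{u^{1+d}},u^{-1-d})$ of \eqref{eq: divisorial fan Sdf}. They give $G$-isomorphic curves via $g{\chi'}^{m}\mapsto g u^{m}\chi^{m}$; this is Lemma~\ref{lem: principal phi pairs} with $h=u$. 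Yet no automorphism of $\P^{1}$ matches the coefficients $\pm\frac{1}{d}$ with $\pm\frac{1+d}{d}$. The unit condition comes from the proof of Proposition~\ref{prop: pais D alpha}, which treats $\psi^{*}(\alpha_{2})$ as a section of $\mathscr{O}(\D_{1}(d))$. Only $\mathfrak{f}(d)\psi^{*}(\alpha_{2})$ is such a section, as used in Corollary~\ref{corollary: bijection Phi-pairs pp-divs}. So $f\in k[C_{0}]^{*}$ follows only when $\psi^{*}\D_{2}=\D_{1}$.

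\emph{The truncation.} This is not bookkeeping. For $\mathfrak{f}=\{1\}\otimes f$, the truncation $\mathfrak{f}|_{\Ed}$ is either $\mathfrak{f}$ or trivial, according as $\Supp(\div(f))\subset\Supp(\Ed)$ or not. In the example, $\div(u)\not\subset\Supp(\Ed)$ for every face $\Ed\preceq\D_{0}$. So with $\psi=\mathrm{id}$ and $\mathfrak{f}=\{1\}\otimes u$, the ratio condition on those faces would read $u^{d}=1$.

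What your argument actually proves is the criterion of Lemma~\ref{lem: principal phi pairs}: the curves are $G$-isomorphic if and only if $\beta=f^{d}\psi^{*}\beta'$ for some $\psi\in\Aut(\overline{C})$ and $f\in k(\overline{C})^{*}$. The proposition holds once the definition is read with $h_{\Ed}$ only a rational function and $\mathfrak{f}|_{\Ed}$ the plain restriction.
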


\begin{proof}
Let $C$ and $C'$ be  $G$-normal projective curves over $\overline{C}$ arising from $(\Sd,\beta)$ and $(\Sd',\beta')$, respectively. Assume that $C$ and $C'$ are $G$-isomorphic. This yields a $T$-equivariant commutative diagram
\[
\xymatrixcolsep{5pc}\xymatrix{
C^{\#} \ar[r]^{\theta} \ar[d]_{\Phi} 
& C'^{\#} \ar[d]^{\Phi_{2}} \\
T/G \ar[r]^{\simeq} & T/G.
}
\]
The isomorphism $\theta$ induces an automorphism $\psi:\overline{C}\to\overline{C}$.

Let $\D_{1},\dots,\D_{l}$ be the pp-divisors of $\Sd$ and $\D'_{1},\dots,\D'_{m}$ those of $\Sd'$. Since $\theta:X(\Sd)\to X(\Sd')$ is a $T$-equivariant isomorphism, for any pair $(i,j)$ we obtain $T$-equivariant isomorphisms
\[
\theta_{ij}:X(\D_{i})\cap\theta^{-1}(X(\D'_{j})) \longrightarrow \theta(X(\D_{i}))\cap X(\D'_{j}).
\]
Moreover, by \cite[Proposition~3.13]{GaryAHS}, there exist faces $\Ed_{ij}\preceq\D_{i}$ and $\Ed'_{ij}\preceq \D'_{j}$ such that
\[
X(\Ed_{ij})=X(\D_{i})\cap\theta^{-1}(X(\D'_{j})), \qquad
X(\Ed'_{ij})=\theta(X(\D_{i}))\cap X(\D'_{j}).
\]
Furthermore, the $\Phi$-pairs $(\Sd,\beta)$ and $(\Sd',\beta')$ induce $\Phi$-pairs $(\Ed_{ij},\alpha_{\Ed_{ij}})$ and $(\Ed'_{ij},\alpha'_{\Ed'_{ij}})$, respectively, determining $G$-isomorphic curves. Hence, by Proposition~\ref{prop: pais D alpha}, there exists an isomorphism of pp-divisors
\[
(\psi,\mathfrak{f}_{\Ed_{ij}}):\Ed_{ij}\to\Ed'_{ij}
\]
such that
\[
\alpha_{\Ed_{ij}}/\psi^{*}(\alpha'_{\Ed'_{ij}})=\mathfrak{f}_{\Ed_{ij}}({{d}})=f_{ij}^{{{d}}},\ \text{where } f_{ij}:=\mathfrak{f}_{\Ed_{ij}}(1)\in k[\Loc(\Ed_{ij})]^{*}.
\]

Note that $\mathfrak{f}_{\Ed_{ij}}|_{\Ed_{ab}}=\mathfrak{f}_{\Ed_{ab}}|_{\Ed_{ij}}$ for any pair $(i,j)$ and $(a,b)$. This allows us to construct a plurifunction $\mathfrak{f}\in k(N,\overline{C})^{*}$ such that $\mathfrak{f}|_{\Ed_{ij}}=\mathfrak{f}_{\Ed_{ij}}$, which is uniquely determined if
\[
\Supp(\div(\mathfrak{f}))=\bigcup_{i,j}\Supp(\div(\mathfrak{f}_{\Ed_{ij}})).
\]

\smallskip

Conversely, if $(\psi,\mathfrak{f}|_{\Ed_{ij}}):\Ed_{ij}\to\Ed'_{ij}$ is an isomorphism such that
\[
\alpha_{\Ed_{ij}}/\psi^{*}(\alpha'_{\Ed'_{ij}})=\mathfrak{f}_{\Ed_{ij}}({{d}})=f_{ij}^{{{d}}},\ \text{where } f_{ij}:=\mathfrak{f}_{\Ed_{ij}}(1)\in k[\Loc(\Ed_{ij})]^{*},
\]
then we obtain a commutative diagram
\[
\xymatrixcolsep{5pc}\xymatrix{
X(\Ed_{ij}) \ar[r] \ar[d]_{\Phi_{\alpha_{ij}}} 
& X(\Ed'_{ij}) \ar[d]^{\Phi_{\alpha'_{ij}}} \\
T/G \ar[r]^{\simeq} & T/G
}
\]
which is $T$-equivariant. Since the varieties $X(\Ed_{ij})$ and $X(\Ed'_{ij})$ cover respectively $X(\Sd)$ and $X(\Sd')$, these diagrams glue into a commutative diagram
\[
\xymatrixcolsep{5pc}\xymatrix{
X(\Sd) \ar[r] \ar[d]_{\Phi_{\beta}} 
& X(\Sd') \ar[d]^{\Phi_{\beta'}} \\
T/G \ar[r]^{\simeq} & T/G.
}
\]
Then the fibers $C_{1}$ and $C_{2}$ are $G$-isomorphic. Since $C_{1}$ is geometrically integral if and only if $C_{2}$ is, it follows that the fiber $\Phi_{\beta_1}^{-1}(\{1\})$ is geometrically integral if and only if $\Phi_{\beta_2}^{-1}(\{1\})$ is.
\end{proof}

\begin{remark}\label{remark: reduction by Aut(C)}
    Notice that, for a divisorial fan $\Sd\in\mathrm{FDiv}_{\Q}(\overline{C},\{0\})$ and an automorphism $\psi\in\Aut(\overline{C})$, the pullback $\psi^{*}\Sd:=\{\psi^{*}\D\mid \D\in\Sd\}$ is a divisorial fan isomorphic to $\Sd$. Moreover, $\psi$ induces a $T$-equivariant isomorphism $\Psi:X(\psi^{*}\Sd)\to X(\Sd)$, which yields a bijection $(\Sd,\beta) \mapsto (\psi^{*}\Sd,\beta')$ on the set of isomorphism classes of $\Phi$-pairs  over $\overline{C}$, as illustrated by the commutative diagram
    \[
    \xymatrix{ X(\psi^{*}\Sd) \ar[rr]^{\Psi} \ar[rd]_{\Phi_{\beta'}:=\Phi_{\beta}\circ\Psi} & & X(\Sd) \ar[dl]^{\Phi_{\beta}} \\ & T/G & }
    \]
\end{remark}

So far, in analogy with the affine case (see Section~\ref{sec: Combinatorial description in the affine case}), we have studied the structure of divisorial fans associated with  $G$-normal projective curves. However, a divisorial fan alone does not suffice: not every $\Phi$-pair $(\Sd,\beta)$ defines a  $G$-normal projective curve (it may only define a projective $G$-scheme of dimension $1$). We therefore need a criterion characterizing when a $\Phi$-pair $(\Sd,\beta)$ gives rise to a $G$-normal curve, i.e.~when the fiber of $\Phi_{\beta}$ over the base point is geometrically integral.

\smallskip

Every smooth projective curve is covered by two affine curves obtained by removing two distinct points. We set $C_0:=\overline{C}\setminus\{c_{0}\}$ and $C_{\infty}:=\overline{C}\setminus\{c_{\infty}\}$, where $c_{0}\neq c_{\infty}$. We then define the divisorial fan $\Sd_{0,\infty}$ by
\begin{equation} \label{eq: simplified form of div fans over a curve}
\Sd_{0,\infty}:=\Bigl\{\D_{0},\D_{\infty},\D_{0}\cap \D_{\infty}\Bigr\},
\end{equation} 
where  
\[
\D_{0}:=\left[\frac{r_{0}}{{{d}}_{0}}\right]\otimes\{c_{0}\}
+\left[\frac{r_{1}}{{{d}}_{1}}\right]\otimes\{c_{1}\}
+\cdots
+\left[\frac{r_{e}}{{{d}}_{e}}\right]\otimes\{c_{e}\}
+\varnothing\otimes\{c_{\infty}\},
\]
\[
\D_{\infty}:=\varnothing\otimes\{c_{0}\}
+\left[\frac{r_{1}}{{{d}}_{1}}\right]\otimes\{c_{1}\}
+\cdots
+\left[\frac{r_{e}}{{{d}}_{e}}\right]\otimes\{c_{e}\}
+\left[\frac{r_{\infty}}{{{d}}_{\infty}}\right]\otimes\{c_{\infty}\},
\]
and the $c_{j}$ are $k$-points of $C_0$, $\gcd(r_{i},d_{i})=1$, each ${{d}}_{j}$ divides ${{d}}$, and $d_{j}>0$. Recall that, by definition of the intersection of pp-divisors (see Definition~\ref{definition: divisorial fan intersection} in Section~\ref{sec: divisorial fans}), we have
\[
\D_{0}\cap\D_{\infty}:=\varnothing\otimes\{c_{0}\}
+\left[\frac{r_{1}}{{{d}}_{1}}\right]\otimes\{c_{1}\}
+\cdots
+\left[\frac{r_{e}}{{{d}}_{e}}\right]\otimes\{c_{e}\}
+\varnothing\otimes\{c_{\infty}\}.
\]
Notice also that $\Loc(\D_{0})=C_0$ and $\Loc(\D_{\infty})=C_{\infty}$.

\begin{proposition}\label{proposition: reduction to three pp-div}
    Any $\Phi$-pair $(\Sd,\beta)$ over $\overline{C}$ is isomorphic to a $\Phi$-pair $(\Sd_{0,\infty},\tilde{\beta})$ over $\overline{C}$.
\end{proposition}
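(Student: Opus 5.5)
The plan is to pass through the geometry rather than manipulate the fan combinatorially. Let $(\Sd,\beta)$ be a $\Phi$-pair over $\overline{C}$, and set $S:=X(\Sd)$ with its morphisms $\Phi_\beta\colon S\to T/G$ and $\pi\colon S\to\overline{C}$. By Lemma~\ref{lem: fiber polyhedrons} and \cite[Proposition~7.7]{GaryAH}, every pp-divisor $\D\in\Sd$ has tail $\{0\}$ and singleton coefficients $\{v_y\}$ on $\Loc(\D)$. In particular every $T$-orbit is one-dimensional with finite stabilizer, so $\pi$ is a geometric quotient whose fibers are single orbits. Separatedness then forces the coefficient $v_y$ to be the same in every $\D\in\Sd$ with $y\in\Loc(\D)$: two distinct singletons over the same point would give two non-separated orbits over $y$. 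Since the loci cover $\overline{C}$, this produces a well-defined global function $y\mapsto v_y$ on $\overline{C}$ that is nonzero at only finitely many points.

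Next I choose two distinct $k$-points $c_0\neq c_\infty$ of $\overline{C}$. If $v_{c_0}$ or $v_{c_\infty}$ is nonzero, I include it as $r_0/d_0$ or $r_\infty/d_\infty$; otherwise these coefficients are $0$. I then define $\D_0$ and $\D_\infty$ as in \eqref{eq: simplified form of div fans over a curve}, using the global coefficients $v_y$. The properness of $\D_0$ is easy: $\Loc(\D_0)=C_0$ is affine, the tail is $\{0\}$, and the conditions on semiampleness and bigness are vacuous or automatic on an affine curve. The empty coefficient at $c_\infty$ is the support of an effective ample divisor, so it is allowed. Similarly $\D_\infty$ is proper with locus $C_\infty$.

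Both $X(\D_0)$ and $X(\D_\infty)$ are open $T$-stable subsets of $S$, namely $\pi^{-1}(C_0)$ and $\pi^{-1}(C_\infty)$. To see this, I cover $C_0$ by the affine loci of members of $\Sd$ and use that affine $T$-varieties over an affine base are determined by their pp-divisor restricted to opens \cite[Proposition~3.13]{GaryAHS}. Their intersection is $X(\D_0\cap\D_\infty)$, so $\Sd_{0,\infty}$ is a divisorial fan and $X(\Sd_{0,\infty})\simeq S$ over $\overline{C}$ via the identity. I take $\tilde\beta:=\beta$. The condition $\div(\beta|_{C_0})=-\D_0(d)$ holds because it holds on each $\Loc(\D)\cap C_0$ by Definition~\ref{def: Phi structure proj}, and these opens cover $C_0$; the same argument works on $C_\infty$. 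The glued morphism $\Phi_{\tilde\beta}$ agrees with $\Phi_\beta$ on every chart. To conclude, I exhibit the isomorphism in the sense of Definition~\ref{def: divisorial pairs isomorphic} with $\psi=\mathrm{id}$ and $\mathfrak f$ trivial, taking as faces the restrictions $\Ed=\D\cap\D_0$ (respectively $\D\cap\D_\infty$) to the intersected loci. Alternatively, one can invoke Proposition~\ref{proposition: isomorphic divisorial pairs} directly, since the two pairs define the same triple.

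The main obstacle is the step identifying $X(\D_0)$ with $\pi^{-1}(C_0)\subset S$, and showing that the coefficients of the members of $\Sd$ literally agree rather than agreeing only up to principal divisors. Members of a general divisorial fan may differ by plurifunctions on overlaps, although within a single fan the gluing maps are the identity. My first approach is to argue locally at each $y$: the face relation $\D\cap\D'\preceq\D$ with singleton coefficients forces $\Delta_y\cap\Delta'_y\neq\varnothing$ whenever $y$ lies in both loci, hence $\Delta_y=\Delta'_y$. If this local argument fails, the fallback is to apply the equivalence of categories on each affine open to identify the two descriptions.
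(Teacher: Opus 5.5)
Your plan (read off $\D_0,\D_\infty$ from the coefficients of $\Sd$, keep $\tilde\beta=\beta$, and use $\psi=\mathrm{id}$ with trivial plurifunction) rests on the claim that a point $y\in\Loc(\D)\cap\Loc(\D')$ has the same coefficient in $\D$ and in $\D'$. Neither argument you give for this claim is valid.

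\emph{Separatedness does not force it.} Over $y$, distinct singletons $\{v\},\{v'\}$ correspond to the rays through $(v,1)$ and $(v',1)$. These rays meet only at the origin, so the gluing is separated and has two orbits over $y$. Concretely, on $\P^1$ take $\D_1=\{0\}\otimes\{0\}+\varnothing\otimes\{\infty\}$, $\D_2=\{1\}\otimes\{0\}+\varnothing\otimes\{\infty\}$, $\D_3=\varnothing\otimes\{0\}+\{0\}\otimes\{\infty\}$, together with their common intersection $\varnothing\otimes\{0\}+\varnothing\otimes\{\infty\}$. That intersection is a face of each $\D_i$, namely the preimage of $\Gm$. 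This is a divisorial fan with tail $\{0\}$, and one checks directly that the three pairwise gluings are separated; it is the toric surface whose fan has the rays through $(0,1)$, $(1,1)$, $(0,-1)$. All orbits are one-dimensional with trivial stabilizer, yet the fibre over $0$ consists of two orbits. So your inference ``finite stabilizers, hence $\pi$ has single-orbit fibres'' also fails.

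\emph{The face relation does not force it either.} It allows $\Delta_y\cap\Delta'_y=\varnothing$, because shrinking the locus of a pp-divisor gives a face. Your fallback via the equivalence of categories does not say what would force agreement.

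The missing idea is that the $\Phi$-structure forces agreement in one line. $\beta$ is a single element of $k(\overline{C})^*$ with $\div(\beta|_{\Loc(\D)})=-\D(d)$ for every $\D\in\Sd$. Hence $d\,v^{\D}_y=-\mathrm{ord}_y(\beta)$ for every $\D$ whose locus contains $y$, and $v_y:=-\mathrm{ord}_y(\beta)/d$ is independent of $\D$. In the example above no such $\beta$ exists, since $\mathrm{ord}_0(\beta)$ would have to be both $0$ and $-d$. With this fix:
\begin{itemize}
\item $\Loc(\D\cap\D')=\Loc(\D)\cap\Loc(\D')$;
\item the fibres of $\pi$ are single orbits;
\item $\pi^{-1}(C_0)=X(\D_0)$ and $\pi^{-1}(C_\infty)=X(\D_\infty)$.
\end{itemize}
The rest of your argument then goes through. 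It even shows directly that your $\Sd_{0,\infty}$ equals $\Sd_{\beta^{-1}}$ from~\eqref{eq: divisorial fan Sdf}.

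Once repaired, your route differs from the paper's. The paper chooses some pp-divisor $\D_0$ describing $\pi^{-1}(C_0)$ and transports $\beta$ to $\tilde\alpha_0$. It then determines $r_\infty$ from $\div(\tilde\alpha_\infty)=-\D_\infty(d)$ and concludes via the geometric criterion of Proposition~\ref{proposition: isomorphic divisorial pairs}. Yours produces an explicit isomorphism of $\Phi$-pairs (the identity with trivial plurifunction) and needs no detour through the associated curves.
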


\begin{proof}
Let $(\Sd,\beta)$ be a $\Phi$-pair over $\overline{C}$. Notice that $\Loc(\Sd)=\overline{C}$. Let $c_{0},c_{\infty}\in \overline{C}$ be two distinct points. Let $\D_{0}$ be a pp-divisor associated with $\pi^{-1}(C_0)$, where
\[
\D_{0}:=\left[\frac{r_{0}}{{{d}}_{0}}\right]\otimes\{c_{0}\}
+\left[\frac{r_{1}}{{{d}}_{1}}\right]\otimes\{c_{1}\}
+\cdots
+\left[\frac{r_{e}}{{{d}}_{e}}\right]\otimes\{c_{e}\}
+\varnothing\otimes\{c_{\infty}\}
\]
as in Lemma~\ref{lem: fiber polyhedrons}. Besides, the $\Phi$-structure $\beta$ on $\Sd$ induces a $\Phi$-structure $\tilde{\alpha}_{0}:=\beta|_{C_{0}}$ on $\D_{0}$.

We now define
\[
\D_{\infty}
:=\varnothing\otimes\{c_{0}\}
+\left[\frac{r_{1}}{d_{1}}\right]\otimes\{c_{1}\}
+\cdots
+\left[\frac{r_{e}}{d_{e}}\right]\otimes\{c_{e}\}
+\left[\frac{r_{\infty}}{d_{\infty}}\right]\otimes\{c_{\infty}\},
\]
where only the coefficient $r_{\infty}$ remains to be determined. Since the restriction
$\tilde{\alpha}_{0}|_{C_0\cap C_{\infty}}$
extends uniquely to an element
$\tilde{\alpha}_{\infty}\in k(C_{\infty})^{*}$,
the divisor $\D_{\infty}$ is uniquely determined by the relation
\[
\div(\tilde{\alpha}_{\infty})=-\D_{\infty}(d)
\]
(see Lemma~\ref{lem: symmetry of pp-div}~\ref{item i: proposition: symmetry of pp-div},~\ref{item iv: proposition: symmetry of pp-div}).
Consequently, $(\D_{0},\tilde{\alpha}_{0})$ and $(\D_{\infty},\tilde{\alpha}_{\infty})$ define a $\Phi$-pair
$(\Sd_{0,\infty},\tilde{\beta})$
over $\overline{C}$, where
\[
\Sd_{0,\infty}
:=\{\D_{0},\D_{\infty},\D_{0}\cap\D_{\infty}\}.
\]

Since $X(\Sd) \simeq X(\Sd_{0,\infty})$ and the $\Phi$-structures $\beta$ and $\tilde{\beta}$ satisfy $\Phi_{\beta}|_{X(\D_{0})}=\Phi_{\tilde{\alpha}_{0}}=\Phi_{\tilde{\beta}}|_{X(\D_{0})}$, it follows that $\Phi_{\beta} \simeq \Phi_{\tilde{\beta}}$ and therefore define isomorphic $G$-normal separated irreducible schemes of dimension 1. Then, by Proposition~\ref{proposition: isomorphic divisorial pairs}, the $\Phi$-pairs $(\Sd,\beta)$ and $(\Sd_{0,\infty},\tilde{\beta})$ are isomorphic.
\end{proof}

\begin{remark}\label{remark: reduction to the affine case}
    The last part of the proof actually says that a $\Phi$-pair $(\Sd_{0,\infty},\beta)$ is completely determined by the $\Phi$-pair $(\D_{0},\alpha_{0})$.
\end{remark}

\begin{definition}
A $\Phi$-pair $(\Sd,\beta)$ over $\overline{C}$ is said to be \emph{(resp. geometrically) integral} if $(\D,\alpha_{\D})$ is (resp. geometrically) integral in the sense of Definition~\ref{def: integral pairs} for every $\D\in\Sd$.
\end{definition}

\begin{proposition}\label{proposition: integral divisorial pair}
A $\Phi$-pair $(\Sd,\beta)$ is geometrically integral if and only if the fiber of the morphism $\Phi_{\beta}\colon X(\Sd)\to T/G$ over the base point is geometrically integral.
\end{proposition}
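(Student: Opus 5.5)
We reduce the statement to the affine criterion, Proposition~\ref{proposition: integrality of the fiber}, applied to each pp-divisor of the fan. For every $\D\in\Sd$ the open $T$-stable subset $X(\D)\subset X(\Sd)$ carries the restriction $\Phi_{\beta}|_{X(\D)}=\Phi_{\alpha_{\D}}$. Hence the fiber $F:=\Phi_{\beta}^{-1}(\{1\})$ meets $X(\D)$ in $F_{\D}:=\Phi_{\alpha_{\D}}^{-1}(\{1\})$, and the $F_{\D}$ form an open cover of $F$. By Proposition~\ref{proposition: integrality of the fiber}, $(\D,\alpha_{\D})$ is geometrically integral if and only if $F_{\D}$ is geometrically integral. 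Since base change to $\bar k$ commutes with forming $X(\Sd)$, $\Phi_\beta$ and the restrictions, we may work over $\bar k$ and prove the integral version there.

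For ``fiber integral $\Rightarrow$ pair integral'', we use that each nonempty open subscheme of an integral scheme is integral. So it suffices to check that $F_{\D}\neq\varnothing$. The morphism $\Phi_{\alpha_{\D}}$ is $T$-equivariant onto the homogeneous space $T/G$ and $X(\D)\neq\varnothing$. Therefore $\Phi_{\alpha_{\D}}$ is surjective and every fiber $F_{\D}$ is nonempty.

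For the converse, assume every $F_{\D}$ is integral. Then $F$ is covered by integral opens, so it is reduced and locally irreducible. It remains to prove that $F$ is irreducible, i.e.\ that any two nonempty opens $F_{\D}$, $F_{\Ed}$ intersect. Their intersection is $F_{\D\cap\Ed}$, the fiber of $\Phi_{\alpha_{\D\cap\Ed}}$ on $X(\D\cap\Ed)=X(\D)\cap X(\Ed)$. Since $\Loc(\D)\cap\Loc(\Ed)$ is a nonempty open subset of the curve $\overline{C}$, the pp-divisor $\D\cap\Ed$ has nonempty locus. Hence $X(\D\cap\Ed)\neq\varnothing$, and by the same surjectivity argument $F_{\D\cap\Ed}\neq\varnothing$. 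It follows that $F$ is irreducible and reduced, hence integral.

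Doing all this after base change to $\bar k$ gives the geometric statement. Alternatively, one can reduce via Proposition~\ref{proposition: reduction to three pp-div} and Proposition~\ref{proposition: isomorphic divisorial pairs} to $\Sd_{0,\infty}$, where only three pp-divisors need to be checked.

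The main obstacle is the nonemptiness of the pairwise intersections, i.e.\ that $\D\cap\Ed$ has nonempty locus and actually belongs to the fan. For the second point we use axiom~(i) of a divisorial fan, which puts $\D\cap\Ed$ in $\Sd$. For the first, we use that the loci are dense opens of the irreducible curve $\overline{C}$. We must also check that $F$ is separated, so that ``integral'' has its usual meaning; this holds because $X(\Sd)\simeq C^{\#}$ is a variety whenever $\Phi_\beta$ comes from a curve, and in general we only need irreducibility and reducedness, which are local and topological properties.
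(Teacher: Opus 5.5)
Your proof is correct and is essentially the paper's argument: restrict $\Phi_{\beta}^{-1}(\{1\})$ to the charts $X(\D)$, apply Proposition~\ref{proposition: integrality of the fiber} to each $(\D,\alpha_{\D})$, and glue along the nonempty overlaps $X(\D\cap\Ed)$. The paper first reduces to $\Sd_{0,\infty}$ via Propositions~\ref{proposition: reduction to three pp-div} and~\ref{proposition: isomorphic divisorial pairs}, so only two charts occur, whereas you handle an arbitrary fan directly and make explicit the nonemptiness of the chart fibers and of the overlaps, which the paper leaves implicit (strictly, $\Loc(\D\cap\Ed)$ can be smaller than $\Loc(\D)\cap\Loc(\Ed)$ when two singleton coefficients differ at a point, but it is still cofinite in $\overline{C}$, so your conclusion stands).
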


\begin{proof}
By Propositions~\ref{proposition: reduction to three pp-div}
and~\ref{proposition: isomorphic divisorial pairs}, it suffices to prove the statement for divisorial fans of the form $\Sd_{0,\infty}$.

Assume first that the fiber of the morphism $\Phi_{\beta}\colon X(\Sd)\to T/G$ over the base point is geometrically integral.
Since the fiber of $\Phi_{\alpha_{0}}$ over the base point coincides with $\Phi_{\beta}^{-1}(\{1\})\cap X(\D_{0})$, it follows that $(\D_{0},\alpha_{0})$ is geometrically integral. Likewise, $(\D_{\infty},\alpha_{\infty})$ is geometrically integral.

Conversely, assume that the $\Phi$-pair $(\Sd_{0,\infty},\beta)$ is geometrically integral.
 Set
\[
V:=\Phi_{\alpha_{0}}^{-1}(\{1\})\cap X(\D_{0}\cap\D_{\infty})
=\Phi_{\alpha_{\infty}}^{-1}(\{1\})\cap X(\D_{0}\cap\D_{\infty}).
\]
Since
\[
\Phi_{\beta}^{-1}(\{1\})=\Phi_{\alpha_{0}}^{-1}(\{1\})\cup \Phi_{\alpha_{\infty}}^{-1}(\{1\}),
\]
the fiber $\Phi_{\beta}^{-1}(\{1\})$ is obtained by gluing $\Phi_{\alpha_{0}}^{-1}(\{1\})$ and $\Phi_{\alpha_{\infty}}^{-1}(\{1\})$ along the dense open subset $V$, hence it is geometrically integral.
\end{proof}

We are now able to prove the main result of this section.

\begin{theorem}\label{th: main result AHS descritpion}
As before, let $\overline{C}$ be a smooth projective curve, and let $G = \mu_{{d}}$ be a finite subgroup scheme of $T=\Gm$.
There is a one-to-one correspondence
\[ \mathcal{E}:=
\left\{
\begin{array}{c}
\text{$G$-isomorphism classes of $G$-normal}\\
\text{projective curves with quotient $\overline{C}$}
\end{array}
\right\}
\;\longleftrightarrow\;  \mathcal{S}:=
\left\{
\begin{array}{c}
\text{isomorphism classes of geometrically}\\
\text{integral $\Phi$-pairs $(\Sd,\beta)$  over $\overline{C}$}
\end{array}
\right\}.
\] 
\end{theorem}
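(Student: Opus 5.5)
The plan is to mirror the proof of Theorem~\ref{th: affine refinement}, replacing each affine ingredient by its projective counterpart established above. By Proposition~\ref{prop: second reduction}, $\E$ is in bijection with the set $\mathcal{T}$ of $T$-isomorphism classes of triples $(S,\Phi,\pi)$ with $S$ a smooth $T$-surface, $\pi$ a quotient onto $\overline{C}$, and $\Phi^{-1}(\{1\})$ geometrically integral. It therefore suffices to build mutually inverse maps between $\E$ and $\mathcal{S}$ and to check that they are well defined on isomorphism classes.

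\emph{From curves to pairs.} Given a $G$-normal projective curve $C$ with quotient $\overline{C}$, the surface $C^{\#}$ is a normal complexity-one $T$-variety. By \cite{AHS08,GaryAHS} it is of the form $X(\Sd)$ for some divisorial fan over $\overline{C}$; the tail fan must be $\{0\}$, since each affine chart carries a $\Phi$-structure as in Proposition~\ref{prop: invertible element}. Proposition~\ref{prop: invertible element divisorial fan}~\ref{proposition: invertible element divisorial fan part i} then gives a $\Phi$-pair $(\Sd,\beta)$ with $\Phi_\beta=\Phi$. Because $\Phi^{-1}(\{1\})\simeq C$ is geometrically integral, Proposition~\ref{proposition: integral divisorial pair} shows that $(\Sd,\beta)$ is geometrically integral. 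If $C$ is replaced by a $G$-isomorphic curve, Proposition~\ref{proposition: isomorphic divisorial pairs} shows that the resulting pair is isomorphic. The same proposition also absorbs the non-uniqueness of the divisorial fan describing $C^{\#}$, because any two choices describe the same curve. Hence the map $\E\to\mathcal{S}$ is well defined.

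\emph{From pairs to curves.} Given a geometrically integral $\Phi$-pair $(\Sd,\beta)$, Proposition~\ref{proposition: integral divisorial pair} shows that $\Phi_\beta^{-1}(\{1\})$ is geometrically integral. Proposition~\ref{prop: invertible element divisorial fan}~\ref{proposition: invertible element divisorial fan part ii} then produces a $G$-normal projective curve $C$ with quotient $\overline{C}$ and $C^{\#}\simeq X(\Sd)$. Isomorphic pairs give $G$-isomorphic curves, again by Proposition~\ref{proposition: isomorphic divisorial pairs}. Both composites are the identity on classes: going curve $\to$ pair $\to$ curve recovers $\Phi^{-1}(\{1\})\simeq C$, and going pair $\to$ curve $\to$ pair recovers a pair determining the same $G$-curve, hence an isomorphic pair by the ``only if'' direction of Proposition~\ref{proposition: isomorphic divisorial pairs}.

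The main obstacle is ensuring the fiber is actually a \emph{projective} curve with quotient exactly $\overline{C}$. This requires two facts: that $X(\Sd)$ is separated, and that $\Loc(\Sd)=\overline{C}$. I would first reduce via Proposition~\ref{proposition: reduction to three pp-div} to fans of the form $\Sd_{0,\infty}$, whose locus is all of $\overline{C}$ by construction. The fiber is then glued from the affine fibers over $C_0$ and $C_\infty$ along the dense open subset $V$, as in the proof of Proposition~\ref{proposition: integral divisorial pair}. Separatedness should follow because the fiber is finite over $\overline{C}$: each affine piece is finite over $C_0$ or $C_\infty$, since $U\to U/G$ is finite. Properness of $\overline{C}$ then gives properness of the fiber, and hence projectivity for a curve.
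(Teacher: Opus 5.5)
Your proposal follows the paper's proof. Proposition~\ref{prop: invertible element divisorial fan}~\ref{proposition: invertible element divisorial fan part i} together with Proposition~\ref{proposition: integral divisorial pair} takes curves to geometrically integral pairs, part~\ref{proposition: invertible element divisorial fan part ii} with the same proposition gives the converse, and Proposition~\ref{proposition: isomorphic divisorial pairs} makes both compatible with isomorphism classes.

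Your extra paragraph on separatedness and projectivity goes beyond what the paper writes; the paper folds this into Proposition~\ref{prop: invertible element divisorial fan}~\ref{proposition: invertible element divisorial fan part ii}. Two points there. First, invoking Proposition~\ref{proposition: reduction to three pp-div} presupposes $\Loc(\Sd)=\overline{C}$, which the paper asserts rather than derives from the definition of a $\Phi$-pair. Second, your finiteness argument works because $X(\D_{0}\cap\D_{\infty})$ is the full preimage of $C_{0}\cap C_{\infty}$ in both charts, not merely because each chart is finite over its base.
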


\begin{proof}  
By Proposition~\ref{prop: invertible element divisorial fan}~\ref{proposition: invertible element divisorial fan part i} and Proposition~\ref{proposition: integral divisorial pair}, any  $G$-normal projective curve $C$ with quotient $\overline{C}$
gives rise to a geometrically integral $\Phi$-pair $(\Sd,\beta)$ over $\overline{C}$.  
Conversely, by Proposition~\ref{prop: invertible element divisorial fan}~\ref{item: lemma invertible element ii} and Proposition~\ref{proposition: integral divisorial pair}, any geometrically integral $\Phi$-pair $(\Sd,\beta)$ over $\overline{C}$ defines a  $G$-normal projective curve $C$ with quotient $\overline{C}$. 
The conclusion then follows from Proposition~\ref{proposition: isomorphic divisorial pairs}. 
\end{proof}

\subsection{Admissible divisorial fans} 
A divisorial fan as defined by \eqref{eq: simplified form of div fans over a curve} does not necessarily give rise to a normal $T$-variety of the form $C^{\#}$ for some $G$-normal projective curve, as illustrated by the example below. Indeed, a compatible $\Phi$-structure may fail to exist, whereas in the affine case such a $\Phi$-structure always exists (see Remark~\ref{remark: canonical Phi-structure}).

\begin{example}
Consider the divisorial fan
$(\Sd, \P^{1}) = (\D_0, \D_\infty, \D_{0,\infty})$,
where
\[
\D_0 := \left[\frac{1}{{{d}}}\right] \otimes \{0\} + \varnothing \otimes \{\infty\}, \quad
\D_\infty := \varnothing \otimes \{0\} + \left[\frac{1}{{{d}}}\right] \otimes \{\infty\}, \quad
\D_{0,\infty} := \D_0 \cap \D_\infty.
\]
Suppose that there exists a $\Phi$-structure $\beta$ on $\Sd$. It induces $\Phi$-structures $\alpha_{0}$ on $\D_{0}$ and $\alpha_{\infty}$ on $\D_{\infty}$ fitting into the commutative diagram
\[
\xymatrix{
X(\D_0) \ar[r]^{\Phi_{\alpha_0}} & T/G \\
X(\D_{0}\cap\D_{\infty}) \ar[u] \ar[r] & X(\D_\infty) \ar[u]_{\Phi_{\alpha_\infty}}
}
\]
which, at the level of coordinate rings, becomes
\[
\xymatrixcolsep{3pc}\xymatrix{
\frac{k[z,x,a,b]}{(x^{{{d}}} - az, ab - 1)} \ar[r]^{x \mapsto u} & \frac{k[z,w,u,v]}{(zw - 1, uv - 1)} \\
k[M^G] \ar[u]^{\Phi_{\alpha_0}^*} \ar[r]_{\Phi_{\alpha_{\infty}}^*} & \frac{k[w,y,a',b']}{(y^{{{d}}} - a'w, a'b' - 1)} \ar[u]_{y \mapsto u}
}
\]
The $\Phi$-pairs $(\D_{0},\alpha_{0})$ and $(\D_{\infty},\alpha_{\infty})$ are the canonical ones by Proposition~\ref{prop: pais D alpha}, since $\Loc(\D_{})$. The commutativity of the diagram then yields
$a \mapsto x^{{{d}}}/z = w\cdot u^{{{d}}}$ and $a' \mapsto y^{{{d}}}/w = z\cdot u^{{{d}}}$, with $z \neq w$, and hence no $\Phi$-structure $\beta$ can exist.
\end{example}

In the following, we characterize those divisorial fans $\Sd_{0,\infty}$ that admit a $\Phi$-structure. Given $f\in k(\overline{C})^{*}$, we define the divisorial fan $\Sd_{f}$ by
\begin{equation} \label{eq: divisorial fan Sdf} \ \ 
\D_{0,f}:=\left[\frac{1}{d}\right]\otimes\div(f)+\varnothing\otimes\{c_{\infty}\},\ 
\D_{\infty,f}:=\varnothing\otimes\{c_{0}\}+\left[\frac{1}{d}\right]\otimes\div(f),\ 
\D_{0,\infty,f}:=\D_{0,f}\cap\D_{\infty,f}.
\end{equation}

\begin{proposition}\label{proposition: admissible structure}
The divisorial fan $\Sd_{0,\infty}$, defined as in \eqref{eq: simplified form of div fans over a curve}, admits a $\Phi$-structure if and only if
\begin{equation}\label{eq: equality degree 0}
\frac{r_{0}}{d_{0}}[k(c_{0}):k]
+\frac{r_{1}}{d_{1}}[k(c_{1}):k]
+\cdots
+\frac{r_{e}}{d_{e}}[k(c_{e}):k]
+\frac{r_{\infty}}{d_{\infty}}[k(c_{\infty}):k]
=0,
\end{equation}
where $k(c_i)$ denotes the residue field of $c_i$ for
$i\in\{0,1,\dots,e,\infty\}$.

Equivalently, $\Sd_{0,\infty}$ admits a $\Phi$-structure if and only if there exists
$f\in k(\overline{C})^{*}$ such that $\Sd_{0,\infty}=\Sd_{f}$, where $\Sd_{f}$ is the divisorial fan defined by~\eqref{eq: divisorial fan Sdf}.
\end{proposition}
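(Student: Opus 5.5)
The key observation is that a $\Phi$-structure $\beta\in k(\overline{C})^{*}$ on $\Sd_{0,\infty}$ is nothing but a rational function whose principal divisor on all of $\overline{C}$ equals $-\D_{\overline{C}}(d)$. Here we set
\[
\D_{\overline{C}}:=\sum_{i\in\{0,1,\dots,e,\infty\}}\frac{r_i}{d_i}\,\{c_i\},
\]
the $\Q$-divisor obtained by evaluating both pp-divisors in degree $1$ and gluing. Indeed, by Definition~\ref{def: Phi structure proj} and Lemma~\ref{lem: symmetry of pp-div}~\ref{item iv: proposition: symmetry of pp-div}, we need $\div(\beta|_{C_0})=-\D_0(d)$ and $\div(\beta|_{C_\infty})=-\D_\infty(d)$. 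Since $C_0\cup C_\infty=\overline{C}$ and the two restrictions agree on $C_0\cap C_\infty$, this is the same as the single condition
\[
\div(\beta)=-d\,\D_{\overline{C}}
\]
on $\overline{C}$. The third member $\D_0\cap\D_\infty$ imposes no further condition, since its locus is the intersection. So the plan is to prove both characterizations starting from this reformulation.

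\emph{Necessity.} If $\beta$ exists, then $\deg\div(\beta)=0$ on the projective curve $\overline{C}$. The degree of the prime divisor $\{c_i\}$ is $[k(c_i):k]$, so $d\sum_i \frac{r_i}{d_i}[k(c_i):k]=0$, which is \eqref{eq: equality degree 0}. Setting $f:=\beta^{-1}$, we get $\div(f)=d\D_{\overline{C}}$, so $\frac1d\div(f)=\D_{\overline{C}}$. Reading off coefficients, $\D_0=[\tfrac1d]\otimes\div(f)+\varnothing\otimes\{c_\infty\}$ and similarly for $\D_\infty$. This gives $\Sd_{0,\infty}=\Sd_f$, where we interpret $[\tfrac1d]\otimes\div(f)$ as $\sum \frac{\mathrm{ord}_c(f)}{d}\otimes\{c\}$. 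Conversely, if $\Sd_{0,\infty}=\Sd_f$, then $\beta:=f^{-1}$ visibly satisfies the required divisor identity on each locus. This settles the equivalence of ``admits a $\Phi$-structure'' with ``$=\Sd_f$''.

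\emph{Sufficiency from the degree condition.} This is the step I expect to be the main obstacle. The degree condition only says that the Weil divisor $d\D_{\overline{C}}$ has degree $0$. It is integral, since $d_i\mid d$. But when $g(\overline{C})>0$, a degree-zero divisor need not be principal, so the statement as written needs either more input or an appropriate reading. I would proceed as follows.

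First, observe that the points $c_1,\dots,c_e$ and their coefficients are fixed by $\D_0$, which already admits the canonical affine $\Phi$-structure of Remark~\ref{remark: canonical Phi-structure}. In the situation of Proposition~\ref{proposition: reduction to three pp-div}, $r_\infty/d_\infty$ is determined by extending $\alpha_0$ across $c_\infty$. So I would exploit the freedom in $c_\infty$ and in the choice of $\alpha_0$ up to $k[C_0]^*$: the divisor $d\D_{\overline{C}}$ differs from $\div(\alpha_0^{-1})$ only at $c_\infty$, and $\div(\alpha_0^{-1})$ has degree $0$. Hence $d\D_{\overline{C}}-\div(\alpha_0^{-1})$ is a multiple of $\{c_\infty\}$ of degree $0$, and therefore vanishes. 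So $d\D_{\overline{C}}=\div(\alpha_0^{-1})$ is principal, and $\beta:=\alpha_0$ works.

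The crux is therefore to justify that $\D_0(d)$ is principal on $C_0$. This holds whenever $\Sd_{0,\infty}$ comes from a $\Phi$-pair, or whenever $\D_0\in G\text{-}\PPDiv_{\Q}(C_0,\{0\})$ by Lemma~\ref{lem: symmetry of pp-div}~\ref{item iii: proposition: symmetry of pp-div}. I would state this as the standing hypothesis, inherited from the setting of \eqref{eq: simplified form of div fans over a curve}, in which both $\D_0$ and $\D_\infty$ are taken in $G\text{-}\PPDiv_{\Q}$. The degree-zero computation above then closes the argument.
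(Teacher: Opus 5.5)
Your argument is correct and is essentially the paper's proof. Necessity comes from principal divisors having degree zero. Sufficiency comes from extending a $\Phi$-structure $\alpha_0$ on $\D_0$ to $\overline{C}$ and pinning down its order at $c_\infty$ by the degree condition. The $\Sd_f$ reformulation follows by taking $f=\beta^{-1}$. Your caveat is also well founded. The paper obtains $\alpha_0$ from Remark~\ref{remark: canonical Phi-structure}, which tacitly presupposes that $\D_0(d)$ is principal on $\Loc(\D_0)$; this is automatic when $k[\Loc(\D_0)]$ is a UFD but can fail when $g>0$. So your standing hypothesis $\D_0\in G\text{-}\PPDiv_{\Q}$ is exactly the assumption the paper's proof uses implicitly.
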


\begin{proof}
Assume first that $\Sd_{0,\infty}$ admits a $\Phi$-structure
$\beta\in k(\overline{C})^{*}$. 
Then, since $\beta|_{C_{0}}$ and $\beta|_{C_{\infty}}$ are the induced $\Phi$-structures on $\D_{0}$ and $\D_{\infty}$, respectively, Lemma~\ref{lem: symmetry of pp-div}~\ref{item iv: proposition: symmetry of pp-div} implies that
\[
\div(\beta|_{C_{0}})
=
-\D_{0}(d)
=
-\left(
\frac{dr_{0}}{d_{0}}\{c_{0}\}
+\frac{dr_{1}}{d_{1}}\{c_{1}\}
+\cdots
+\frac{dr_{e}}{d_{e}}\{c_{e}\}
\right),
\]
and
\[
\div(\beta|_{C_{\infty}})
=
-\D_{\infty}(d)
=
-\left(
\frac{dr_{1}}{d_{1}}\{c_{1}\}
+\cdots
+\frac{dr_{e}}{d_{e}}\{c_{e}\}
+\frac{dr_{\infty}}{d_{\infty}}\{c_{\infty}\}
\right).
\]
Since every principal divisor on $\overline{C}$ has degree zero, the equality~\eqref{eq: equality degree 0} follows after dividing by~$d$.

Conversely, let $\alpha_{0}$ be a $\Phi$-structure on $\D_{0}$, which exists by Remark~\ref{remark: canonical Phi-structure}. Since
$\alpha_{0}\in k(C_{0})^{*}$, it extends uniquely to an invertible rational
function $\beta\in k(\overline{C})^{*}$ with
\[
\div(\beta)
=
n_{0}\{c_{0}\}
+n_{1}\{c_{1}\}
+\cdots
+n_{e}\{c_{e}\}
+n_{\infty}\{c_{\infty}\}.
\]
Since every principal divisor on $\overline{C}$ has degree zero and
$\div(\beta|_{C_{0}})=\div(\alpha_{0})=-\D_{0}(d)$, we obtain
\[ - \left(
\frac{dr_{0}}{d_{0}}[k(c_{0}):k]
+\frac{dr_{1}}{d_{1}}[k(c_{1}):k]
+\cdots
+\frac{dr_{e}}{d_{e}}[k(c_{e}):k] \right)
+n_{\infty}[k(c_{\infty}):k]
=0.
\] 
Hence, it follows from~\eqref{eq: equality degree 0} that
$n_{\infty}=-\frac{dr_{\infty}}{d_{\infty}}$,
 and therefore
\[
\div(\beta|_{C_{\infty}})
=   
-\left(\frac{dr_{1}}{d_{1}}\{c_{1}\}
+\cdots
+\frac{dr_{e}}{d_{e}}\{c_{e}\}
+\frac{dr_{\infty}}{d_{\infty}}\{c_{\infty}\}
\right)
=
-\D_{\infty}(d).
\]
Thus $\beta|_{C_{\infty}}$ is a $\Phi$-structure on $\D_{\infty}$.
Since also
$\div(\beta|_{C_{0,\infty}})=-\D_{0,\infty}(d)$,
it follows that $\beta$ is a $\Phi$-structure on
$\Sd_{0,\infty}$.

Moreover, given a $\Phi$-structure $\beta$ on $\Sd_{0,\infty}$ and taking $f=\beta^{-1}$, we obtain
\[
\D_{0,f}
=\left[\frac{1}{d}\right]\otimes\div(f)+\varnothing\otimes\{c_{\infty}\}
=\left[\frac{1}{d}\right]\otimes\left(\frac{dr_{0}}{d_{0}}\{c_{0}\}
+\cdots
+\frac{dr_{e}}{d_{e}}\{c_{e}\}
+\frac{dr_{\infty}}{d_{\infty}}\{c_{\infty}\}\right)
+\varnothing\otimes\{c_{\infty}\}
=\D_{0},
\]
where the last equality follows from
$\left[\frac{r_{\infty}}{d_{\infty}}\right]+\varnothing=\varnothing$.
Similarly,
\[
\D_{\infty,f}
=\varnothing\otimes\{c_{0}\}
+\left[\frac{1}{d}\right]\otimes\div(f)
=\varnothing\otimes\{c_{0}\}
+\left[\frac{1}{d}\right]\otimes\left(\frac{dr_{0}}{d_{0}}\{c_{0}\}
+\cdots
+\frac{dr_{e}}{d_{e}}\{c_{e}\}
+\frac{dr_{\infty}}{d_{\infty}}\{c_{\infty}\}\right)
=\D_{\infty}.
\]
Hence,
\[
\D_{0,\infty,f}
=\D_{0,f}\cap\D_{\infty,f}
=\D_{0}\cap\D_{\infty}
=\D_{0,\infty}.
\]

Conversely, for every $f\in k(\overline{C})^{*}$, the pair $(\Sd_{f},f^{-1})$ is a $\Phi$-pair, since
\[
\div(f^{-1}|_{C_{0}})=-\D_{0,f}(d), \qquad
\div(f^{-1}|_{C_{\infty}})=-\D_{\infty,f}(d), \qquad
\div(f^{-1}|_{C_{0,\infty}})=-\D_{0,\infty,f}(d).
\]
Hence, if there exists $f\in k(\overline{C})^{*}$ such that $\Sd_{0,\infty}=\Sd_{f}$, then, setting $\beta=f^{-1}$, we obtain that
$(\Sd_{0,\infty},\beta)=(\Sd_{f},f^{-1})$
is a $\Phi$-pair.
\end{proof}

Given $f\in k(\overline{C})^{*}$, we have seen that the divisorial fan $\Sd_{f}$ defined by~\eqref{eq: divisorial fan Sdf} admits a $\Phi$-structure and that $(\Sd_{f},f^{-1})$ is naturally a $\Phi$-pair.
In the next result, we do not assume that the $\Phi$-pairs $(\Sd_{f_{1}},f_{1}^{-1})$ and $(\Sd_{f_{2}},f_{2}^{-1})$ are supported on the same finite subset of $\overline{C}$.

\begin{lemma}\label{lem: principal phi pairs}
Let $\overline{C}$ be a smooth projective curve. For $f_{1},f_{2}\in k(\overline{C})^{*}$, the $\Phi$-pairs $(\Sd_{f_{1}},f_{1}^{-1})$ and $(\Sd_{f_{2}},f_{2}^{-1})$ are isomorphic if and only if there exist $\psi\in\Aut(\overline{C})$ and $h\in k(\overline{C})^{*}$ such that
\[
\psi^{*}(f_{2}^{-1})\,h^{d}=f_{1}^{-1}.
\]
\end{lemma}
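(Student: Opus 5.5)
We prove the two implications separately. The forward implication is read off from Definition~\ref{def: divisorial pairs isomorphic}. The converse is obtained by building an explicit isomorphism of pp-divisors on a cover of $\overline{C}$ and applying Proposition~\ref{proposition: isomorphic divisorial pairs}.

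\emph{Forward direction.} Assume $(\Sd_{f_1},f_1^{-1})$ and $(\Sd_{f_2},f_2^{-1})$ are isomorphic. Definition~\ref{def: divisorial pairs isomorphic} then provides $\psi\in\Aut(\overline{C})$ and a plurifunction $\mathfrak{f}$. Since $N=\Z$, we may write $\mathfrak{f}=\{1\}\otimes h$ for a single $h\in k(\overline{C})^{*}$, as in the proof of Lemma~\ref{lem: bounded ry}. For each relevant pair of faces $\Ed\preceq\D$ and $\Ed'\preceq\D'$ we have
\[
\alpha_{\Ed}/\psi^{*}(\alpha'_{\Ed'})=\mathfrak{f}|_{\Ed}(d)=h^{d}
\]
on $\Loc(\Ed)$. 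Here $\alpha_{\Ed}$ and $\alpha'_{\Ed'}$ are the restrictions of $f_1^{-1}$ and $f_2^{-1}$. Both sides of $f_1^{-1}=\psi^{*}(f_2^{-1})h^{d}$ are rational functions on $\overline{C}$, and they agree on the nonempty open set $\Loc(\Ed)$. Hence the identity holds in $k(\overline{C})^{*}$.

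One point needs care: the local functions $h_{\Ed}=\mathfrak{f}|_{\Ed}(1)$ must come from one global $h$. This follows from the gluing step in the proof of Proposition~\ref{proposition: isomorphic divisorial pairs}, where the $\mathfrak{f}_{\Ed_{ij}}$ are assembled into a single plurifunction. Alternatively, a single face $\Ed$ with nonempty locus already determines $h$ as a rational function.

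\emph{Converse.} Assume $f_1^{-1}=\psi^{*}(f_2^{-1})h^{d}$. Then
\[
\div(f_1)=\psi^{*}\div(f_2)-d\,\div(h),
\]
so
\[
\psi^{*}\Bigl(\bigl[\tfrac1d\bigr]\otimes\div(f_2)\Bigr)=\bigl[\tfrac1d\bigr]\otimes\div(f_1)+\div(\{1\}\otimes h).
\]
Take $\mathfrak{f}=\{1\}\otimes h$. On any affine open $V\subset\overline{C}$, the pp-divisors $[\frac1d]\otimes\div(f_1)|_V$ and $\psi^{*}([\frac1d]\otimes\div(f_2))|_{\psi(V)}$ are then isomorphic via $(\psi,\mathfrak{f})$, exactly as in the converse part of Lemma~\ref{lem: bounded ry}. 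The $\Phi$-structures satisfy $\alpha/\psi^{*}\alpha'=h^{d}$, with $h$ invertible on the relevant locus. Invertibility of $h$ comes from Lemma~\ref{lem: symmetry of pp-div}~\ref{item iv: proposition: symmetry of pp-div}: both sides are $\Phi$-structures on the same pp-divisor, so $h^{d}\in k[V]^{*}$, hence $h\in k[V]^{*}$.

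The remaining issue is that $\psi$ need not map $c_0,c_\infty$ to $c_0,c_\infty$. To handle this, choose faces instead of the fixed charts. Take the affine opens $\overline{C}\setminus\{c_0,\psi^{-1}(c_0)\}$, $\overline{C}\setminus\{c_\infty,\psi^{-1}(c_\infty)\}$, and so on, together with their images, all of which are loci of faces of $\D_{0,f_i}$ and $\D_{\infty,f_i}$. Alternatively, first replace $(\Sd_{f_2},f_2^{-1})$ by $(\psi^{*}\Sd_{f_2},\psi^{*}f_2^{-1})$ using Remark~\ref{remark: reduction by Aut(C)}, and use Proposition~\ref{proposition: reduction to three pp-div} to move the chart points back. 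This reduces to $\psi=\mathrm{id}$, where the two fans differ only by $\div(\mathfrak{f})$ on each chart. These local isomorphisms satisfy Definition~\ref{def: divisorial pairs isomorphic}. Proposition~\ref{proposition: isomorphic divisorial pairs} then gives isomorphic curves, so the pairs are isomorphic.

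I expect the main obstacle to be this bookkeeping of faces and chart points under $\psi$, together with the invertibility of $h$ on each face locus. The identity of rational functions itself is straightforward.
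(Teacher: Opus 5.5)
Your argument follows the paper's proof in both directions. For the forward implication you restrict the relation $\alpha_{\Ed}/\psi^{*}(\alpha'_{\Ed'})=h^{d}$ to face loci and read it as an identity in $k(\overline{C})^{*}$. As you note, one nonempty locus suffices, whereas the paper uses that the loci cover $\overline{C}$. For the converse you take $\mathfrak{f}=\{1\}\otimes h$ together with faces whose loci are $C_i\cap\psi^{-1}(C_j)$ and their images, which is exactly the paper's choice via \cite[Proposition~3.13]{GaryAHS}.

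\textbf{One step is false:} the claim that $h\in k[V]^{*}$ on each face locus $V$.
\begin{itemize}
\item $f_1^{-1}|_{V}$ is a $\Phi$-structure on $[\tfrac{1}{d}]\otimes\div(f_1)|_{V}$, while $\psi^{*}(f_2^{-1})|_{V}$ is one on $\psi^{*}([\tfrac{1}{d}]\otimes\div(f_2))|_{V}$.
\item By your own computation these two pp-divisors differ by $\div(\{1\}\otimes h)$. So they are "the same pp-divisor" on $V$ only if $\div(h)$ misses $V$, which is what you wanted to prove.
\item Lemma~\ref{lem: symmetry of pp-div}~\ref{item iv: proposition: symmetry of pp-div} only gives back the tautology $d\,\div(h)=\psi^{*}\div(f_2)-\div(f_1)$.
\item Concretely, take $\P^{1}$ with $\psi=\mathrm{id}$, $f_2=u$, $h=u-1$, $f_1=u(u-1)^{-d}$ and $1\notin\{c_0,c_\infty\}$. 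Then $h$ vanishes at a point of every face locus you use.
\item More generally, $h$ can be a unit on a family of face loci covering $\overline{C}$ only if $h$ is constant.
\end{itemize}

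So drop this step rather than try to repair it. What you actually need is that the local $T$-isomorphisms $X(\Ed)\to X(\Ed')$ commute with the morphisms to $T/G$; they then glue, since these faces cover. This holds because the graded isomorphism induced by $(\psi,\mathfrak{f})$, namely $s\mapsto\mathfrak{f}(m)\psi^{*}(s)$ in degree $m$, sends $f_2^{-1}$ to $h^{d}\psi^{*}(f_2^{-1})=f_1^{-1}$. That is your hypothesis verbatim.

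This is all the paper's proof records: it notes $\mathfrak{f}|_{\Ed_{1,ij}}(d)=h^{d}$ on $U_{(i,j)}$ and makes no unit claim, even though Definition~\ref{def: divisorial pairs isomorphic} is written with $h_{\Ed}\in k[\Loc(\Ed)]^{*}$. With the unit step removed, your argument coincides with the paper's. The closing detour through Proposition~\ref{proposition: isomorphic divisorial pairs} is unnecessary: once the local isomorphisms are exhibited, the $\Phi$-pairs are isomorphic by definition.
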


\begin{proof}
Assume that $(\Sd_{f_{1}},f_{1}^{-1})$ and $(\Sd_{f_{2}},f_{2}^{-1})$ are isomorphic, and let $(\psi,\mathfrak{f})$ be an isomorphism of $\Phi$-pairs. Then there exist faces
$\Ed_{2,ij}\preceq\D_{2,i}$ and
$\Ed_{1,ij}\preceq\D_{1,j}$ such that
$(\psi,\mathfrak{f}|_{\Ed_{1,ij}}):\Ed_{2,ij}\to\Ed_{1,ij}$
is an isomorphism (Definition~\ref{def: divisorial pairs isomorphic}). Hence
\[
\psi^{*}(f_{2}^{-1}|_{\Loc(\Ed_{2,ij})})
\,h_{\Loc(\Ed_{1,ij})}^{d}
=
\psi^{*}(f_{2}^{-1}|_{\Loc(\Ed_{2,ij})})
\,\mathfrak{f}|_{\Ed_{1,ij}}(d)
=
f_{1}^{-1}|_{\Loc(\Ed_{1,ij})},
\]
where
$h_{\Loc(\Ed_{1,ij})}:=\mathfrak{f}|_{\Ed_{1,ij}}(1)$.
Thus
\[
(\psi^{*}(f_{2}^{-1})h^{d})|_{\Loc(\Ed_{1,ij})}
=
f_{1}^{-1}|_{\Loc(\Ed_{1,ij})}.
\]
Since the open subsets $\Loc(\Ed_{1,ij})$ cover $\overline{C}$, we conclude that
$\psi^{*}(f_{2}^{-1})h^{d}=f_{1}^{-1}$.

Conversely, suppose that
$\psi^{*}(f_{2}^{-1})h^{d}=f_{1}^{-1}$,
and define $\mathfrak{f}:=[1]\otimes h$. For $i,j\in\{0,\infty\}$, let
\[
U_{(i,j)}:=C_{i}\cap\psi^{-1}(C_{j}),
\qquad
V_{(i,j)}:=\psi(U_{(i,j)}).
\]
Let $\Ed_{1,ij}$ be the face of $\D_{1,i}$ with
$\Loc(\Ed_{1,ij})=U_{(i,j)}$, and let $\Ed_{2,ij}$ be the face of $\D_{2,j}$ with
$\Loc(\Ed_{2,ij})=V_{(i,j)}$, which exist by \cite[Proposition~3.13]{GaryAHS}.
Then
\[
\psi^{*}(f_{2}^{-1}|_{V_{(i,j)}})
\,h_{\Loc(U_{(i,j)})}^{d}
=
f_{1}^{-1}|_{U_{(i,j)}}.
\]
Since
$\mathfrak{f}|_{\Ed_{1,ij}}(d)=h|_{U_{(i,j)}}^{d}$,
it follows that
$(\psi,\mathfrak{f}|_{\Ed_{1,ij}}):\Ed_{2,ij}\to\Ed_{1,ij}$
is an isomorphism for every pair $(i,j)$. Therefore,
$(\Sd_{f_{1}},f_{1}^{-1})$
and
$(\Sd_{f_{2}},f_{2}^{-1})$
are isomorphic.
\end{proof}

\begin{lemma}\label{lem: integrality of principal phi pairs}
Let $f\in k(\overline{C})^{*}$. 
Let $\Sd_{f}$ be the divisorial fan defined by~\eqref{eq: divisorial fan Sdf}.
The $\Phi$-pair $(\Sd_{f},f^{-1})$ is integral if and only if there is no
$f'\in k(\overline{C})^{*}$ satisfying
$f'^{\,m}=f$
for some divisor $m>1$ of $d$.
\end{lemma}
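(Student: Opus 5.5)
The plan is to unwind the definition of integrality for the pp-divisors of $\Sd_{f}$ and to compare, as elements of the function field $k(\overline{C})$, the possible $m$-th roots of the $\Phi$-structure $f^{-1}$. The key observation is that every pp-divisor $\D\in\Sd_{f}$, whether $\D_{0,f}$, $\D_{\infty,f}$ or $\D_{0,\infty,f}$, restricts on its locus to $\left[\frac{1}{d}\right]\otimes\div(f)|_{\Loc(\D)}$. Hence, for every integer $d'$, we have $\D(d')=\frac{d'}{d}\div(f)|_{\Loc(\D)}$. The homogeneous component of degree $d'$ of $A[\Loc(\D),\D]$ consists of rational functions $a\in k(\overline{C})$ with $\div(a|_{\Loc(\D)})+\lfloor \D(d')\rfloor\ge 0$. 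Multiplication in $A[\Loc(\D),\D]$ is multiplication of rational functions, so a relation $a^{m}=\alpha_{\D}$ in the algebra is the same as the identity $a^{m}=f^{-1}$ in $k(\overline{C})^{*}$.

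\emph{Step 1 (a root of $f$ gives non-integrality).} Suppose $f'^{\,m}=f$ with $m>1$ and $m\mid d$, and put $d':=d/m$. Then $\div(f'^{-1})=-\frac{1}{m}\div(f)$. Restricted to $\Loc(\D)$, this equals $-\D(d')$ for each $\D\in\Sd_{f}$. In particular $\D(d')$ is integral on $\Loc(\D)$, so $\lfloor\D(d')\rfloor=\D(d')$ and $f'^{-1}|_{\Loc(\D)}\in\H^{0}(\Loc(\D),\mathscr O(\D(d')))$. Similarly $f'|_{\Loc(\D)}\in\H^0(\Loc(\D),\mathscr O(\D(-d')))$, so $f'^{-1}$ is an invertible homogeneous element of degree $d'$. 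Since $(f'^{-1})^{m}=f^{-1}=\alpha_{\D}$, the pair $(\D,\alpha_{\D})$ is not integral for any $\D\in\Sd_{f}$. Thus $(\Sd_{f},f^{-1})$ is not integral.

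\emph{Step 2 (non-integrality gives a root of $f$).} Suppose some $(\D,\alpha_{\D})$ with $\D\in\Sd_{f}$ is not integral. Then there are an integer $m>1$ and a homogeneous element $a$ of some degree $d'$ with $a^{m}=\alpha_{\D}=f^{-1}|_{\Loc(\D)}$. Comparing degrees gives $md'=d$, so $m$ is a divisor of $d$ with $m>1$, and $d'=d/m>0$. The element $a$ is a rational function on $\Loc(\D)$, hence an element of $k(\Loc(\D))^{*}=k(\overline{C})^{*}$. The identity $a^{m}=f^{-1}$ holds in this field because it holds on the dense open $\Loc(\D)$. Setting $f':=a^{-1}\in k(\overline{C})^{*}$ gives $f'^{\,m}=f$, as required. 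If one wants to double-check the bookkeeping, the invertibility of $a$ and Lemma~\ref{lem: symmetry of pp-div}~\ref{item ii: proposition: symmetry of pp-div} give $\div(a)=-\D(d')$ on $\Loc(\D)$, which is consistent with $\div(f')=\frac1m\div(f)$. This consistency is not needed for the argument.

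The main (mild) obstacle is the identification in Step 2. A homogeneous element of $A[\Loc(\D),\D]$ lives only over the affine locus $\Loc(\D)$, and one must make sure that the root $a$ it provides is a genuine global element of $k(\overline{C})^{*}$ and not merely a local one. This is handled by the fact that $\Loc(\D)$ is a dense open subset of $\overline{C}$, so rational functions on it are exactly elements of $k(\overline{C})$. I would also state explicitly that, by Definition~\ref{def: integral pairs}, ``power'' means an exponent $m\geq 2$; this is what forces $d'$ to be a positive divisor-quotient of $d$ rather than allowing negative or non-dividing exponents.
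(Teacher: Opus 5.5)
Your proof is correct and has the same overall structure as the paper's. Your Step~2 is exactly the paper's argument for the converse: a homogeneous $m$-th root of $\alpha_{\D}$ is a rational function on the dense open $\Loc(\D)$, hence an element of $k(\overline{C})^{*}$, and comparing degrees forces $m\mid d$.

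The difference is in Step~1. The paper works only with $\D_{0}$ and places the root in $A[C_{0},\D_{0}]$ by appealing to the normality of this algebra. You instead check membership directly from $\div(f'^{-1})=-\tfrac{1}{m}\div(f)=-\D(d/m)$ on each locus, which exhibits $f'^{-1}$ as a unit of degree $d/m$ whose inverse $f'$ has degree $-d/m$.

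Your version is more elementary, since it needs no normality input from Altmann--Hausen theory. It also tracks the grading explicitly, which makes clear that $f'^{-1}$, not $f'$, lies in degree $d/m$; the paper's write-up blurs this sign. The normality argument is shorter, but it only works once the root is taken as the degree-$d/m$ element of the fraction field. A degree-$0$ root of $f$ need not be integral over the algebra, because its $m$-th power $f$ need not be regular on $C_{0}$.
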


\begin{proof}
Assume first that $(\Sd_{f},f^{-1})$ is integral. Then, by definition,
$(\D_{0},f^{-1}|_{C_{0}})$ is integral. Suppose that
$f'\in k(\overline{C})^{*}$ satisfies
$f'^{\,m}=f$ for some divisor $m>1$ of $d$. Restricting to $C_{0}$ yields
$(f'|_{C_{0}})^{m}=f|_{C_{0}}$.
Since $A[C_{0},\D_{0}]$ is normal and
$k(C_{0})\subset\mathrm{Quot}(A[C_{0},\D_{0}])$, we obtain
$f'|_{C_{0}}\in A[C_{0},\D_{0}]$.
Moreover,
$f'|_{C_{0}}$ is homogeneous of degree $d/m$,
contradicting the integrality of
$(\D_{0},f^{-1}|_{C_{0}})$ (see Definition~\ref{def: integral pairs}).

Conversely, assume that the $\Phi$-pair $(\Sd_{f},f^{-1})$ is not integral. Then there exists
$\D\in\Sd_{f}$ such that
$(\D,f^{-1}|_{\Loc(\D)})$
is not integral. Hence there exist
$f'\in A[\Loc(\D),\D]^{*}\cap
H^{0}(\Loc(\D),\mathscr{O}_{\Loc(\D)}(\D(d/m)))$
for some divisor $m>1$ of $d$ such that
$f'^{\,m}=f|_{\Loc(\D)}$.
Since
$A[\Loc(\D),\D]\subset k(\overline{C})$,
we may regard $f'$ as an element of
$k(\overline{C})^{*}$, and therefore
$f'^{\,m}=f$.
This proves the assertion.
\end{proof}

\subsection{Finiteness and infiniteness results} 
We begin with a preliminary result to compare the different $\Phi$-structures on a given divisorial fan.

\begin{lemma}\label{lemma: number of structures}
Two $\Phi$-structures on a divisorial fan $(\Sd,\overline{C})$ differ by a family of invertible regular functions $h_{\D}\in k[\Loc(\D)]^{*}$ such that
$h_{\D}|_{\Loc(\D\cap\Ed)}=h_{\Ed}|_{\Loc(\D\cap\Ed)}$
for any pair $\D,\Ed\in \Sd$. 

Moreover, they define $G$-isomorphic $G$-normal curves if and only if for every $\D$ there exists $f_{\D}\in k[\Loc(\D)]^*$ such that $h_{\D}=f_{\D}^{d}$; and $f_{\D}|_{\Loc(\D\cap\Ed)}=f_{\Ed}|_{\Loc(\D\cap\Ed)}$
for any pair $\D,\Ed\in \Sd$. 
\end{lemma}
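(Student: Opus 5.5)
Two $\Phi$-structures $\beta,\beta'$ on the same divisorial fan $\Sd$ satisfy $\div(\beta|_{\Loc(\D)})=-\D(d)=\div(\beta'|_{\Loc(\D)})$ for every $\D\in\Sd$, by Definition~\ref{def: Phi structure proj}. Setting $h_{\D}:=(\beta/\beta')|_{\Loc(\D)}$, we get a rational function with trivial divisor on the smooth curve $\Loc(\D)$, so $h_{\D}\in k[\Loc(\D)]^{*}$. Since all $h_{\D}$ are restrictions of the single function $\beta/\beta'\in k(\overline{C})^{*}$, they agree on overlaps $\Loc(\D\cap\Ed)=\Loc(\D)\cap\Loc(\Ed)$. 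Conversely, a compatible family $(h_{\D})$ glues to a function on $\Loc(\Sd)=\overline{C}$, and multiplying $\beta'$ by it preserves the divisor condition. This gives the first assertion.

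For the second assertion, I would fix $\psi=\mathrm{id}$ and view everything locally, reducing to the affine statement. Each pair $(\D,\beta|_{\Loc(\D)})$ is a $\Phi$-pair over $\Loc(\D)$. The $G$-curves defined by $\beta$ and $\beta'$ are glued from the fibers over the base point of $\Phi_{\alpha_{\D}}$ and $\Phi_{\alpha'_{\D}}$ on the charts $X(\D)$.

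\emph{Sufficiency.} Assume $h_{\D}=f_{\D}^{d}$ with the $f_{\D}$ compatible. The plurifunctions $[1]\otimes f_{\D}$ then define automorphisms $(\mathrm{id},[1]\otimes f_{\D})$ of each $\D$ which agree on intersections. By Proposition~\ref{prop: pais D alpha}, each gives a $T$-equivariant isomorphism $X(\D)\to X(\D)$ intertwining $\Phi_{\alpha_{\D}}$ and $\Phi_{\alpha'_{\D}}$. Compatibility lets these glue to an automorphism of $X(\Sd)$ over $T/G$, so the fibers are $G$-isomorphic. Equivalently, one can check Definition~\ref{def: divisorial pairs isomorphic} with $\mathfrak{f}$ glued from the $f_{\D}$ and invoke Proposition~\ref{proposition: isomorphic divisorial pairs}.

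\emph{Necessity.} Proposition~\ref{proposition: isomorphic divisorial pairs} gives $\psi$ and a plurifunction $\mathfrak{f}$ with $\alpha_{\Ed}/\psi^{*}\alpha'_{\Ed'}=h_{\Ed}^{d}$ on suitable faces. This is the main obstacle, because the isomorphism need not be the identity on $\overline{C}$ and is only given on faces, not on the members $\D\in\Sd$. I would handle it as follows, reading ``define $G$-isomorphic curves'' as $G$-isomorphic compatibly with the given identification of $X(\Sd)$ (i.e.\ $\psi=\mathrm{id}$ and the $T$-automorphism inducing identity on the fan). The $f_{\Ed}:=\mathfrak{f}|_{\Ed}(1)$ then come from one global function $f=\mathfrak{f}(1)\in k(\overline{C})^{*}$. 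Its restriction to $\Loc(\D)$ satisfies $f^{d}=h_{\D}$ there, since the faces cover each $\Loc(\D)$. Because $h_{\D}$ is a unit and $\Loc(\D)$ is normal, $f|_{\Loc(\D)}$ is a unit as well, so $f_{\D}:=f|_{\Loc(\D)}\in k[\Loc(\D)]^{*}$. These $f_{\D}$ are automatically compatible on overlaps, being restrictions of the single function $f$.
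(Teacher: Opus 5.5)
Your proof is correct and follows the paper's route: you get the first assertion by taking the quotient of the two $\Phi$-structures on each $\Loc(\D)$, and the second by applying Proposition~\ref{prop: pais D alpha} chart by chart and gluing. Your explicit restriction to $\psi=\mathrm{id}$ in the necessity direction is exactly what the paper's one-line argument assumes implicitly, and it is genuinely needed: for instance, over $k=\mathbb{F}_q$ with $d\mid q-1$, the structures $u^{-1}$ and $cu^{-1}$ with $c\notin(k^{*})^{d}$ on $\Sd_{1}$ over $\P^{1}$ give $G$-isomorphic curves (through an isomorphism covering $u\mapsto cu$), although their quotient $c^{-1}$ is not a $d$-th power.
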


\begin{proof}
Let $\beta_{1},\beta_{2}\in k(\overline{C})^{*}$ be two $\Phi$-structures on $\Sd$. For any pp-divisor $\D\in \Sd$, we obtain pairs $(\D,\alpha_{1,\D})$ and $(\D,\alpha_{2,\D})$ encoding the respective restrictions $\Phi_{\beta_{1}}\colon X(\D)\to T/G$ and $\Phi_{\beta_{2}}\colon X(\D)\to T/G$. Thus each quotient $h_{\D}:=\alpha_{1,\D}/\alpha_{2,\D}$ lies in $k[\Loc(\D)]^{*}$. Moreover, since for every $\D,\Ed\in \Sd$ and $i\in\{1,2\}$ we have
$\alpha_{i,\D}|_{\Loc(\D\cap\Ed)}=\alpha_{i,\Ed}|_{\Loc(\D\cap\Ed)}$,
it follows that $
h_{\D}|_{\Loc(\D\cap\Ed)}=h_{\Ed}|_{\Loc(\D\cap\Ed)}$.
Hence there exists a compatible family $h_{\D}\in k[\Loc(\D)]^{*}$ satisfying the above compatibility condition, which proves the first assertion.

The second assertion follows by applying Proposition~\ref{prop: pais D alpha} on each pp-divisor of the divisorial fan.
\end{proof}

The following proposition, which is the projective analogue of Proposition~\ref{prop: boundedness affine case}, is a special case of \cite[Corollary~3.3.7]{Bri26}, but here we give an independent proof using Altmann--Hausen--S\"u\ss{} theory. As in the affine case, the finiteness result holds for higher-dimensional $G$-varieties (see Remark~\ref{remark: finiteness in higher-dimensional G-varieties}). In addition, we obtain an upper bound for the number of $G$-isomorphism classes of $G$-normal curves over $\overline{C}$ with a fixed branch locus (i.e., the locus over which the curve fails to be a $G$-torsor), as in the affine case. 
By Proposition~\ref{proposition: reduction to three pp-div} and Theorem~\ref{th: main result AHS descritpion}, it suffices to count divisorial fans of the form $\Sd_{0,\infty}$ as in~\eqref{eq: simplified form of div fans over a curve}. By Remark~\ref{remark: reduction to the affine case}, this further reduces to counting the possible pp-divisors $\D_{0}$ occurring in such divisorial fans. We then obtain the following.

\begin{proposition}\label{proposition: finite number fixed branch locus}
As before, let $\overline{C}$ be a smooth projective curve, let $G=\mu_d$ be a finite subgroup scheme of $T=\Gm$, and assume that $k^{*}=(k^{*})^{d}$. Then there are only finitely many $G$-isomorphism classes of $G$-normal projective curves over $\overline{C}$ with branch locus $Z$.
Moreover, their number is bounded by
\small
\[
N:=
\min_{c\in\overline{C}}
\left|
(k[\overline{C}\setminus(\{c\}\cup Z)]^{*}/k[\overline{C}\setminus\{c\}]^{*})
\big/
(k[\overline{C}\setminus(\{c\}\cup Z)]^{*}/k[\overline{C}\setminus\{c\}]^{*})^{d}
\right|
\cdot
\left|
k[\overline{C}\setminus\{c\}]^{*}/
(k[\overline{C}\setminus\{c\}]^{*})^{d}
\right|.
\]
\normalsize
If, in addition, there exists a closed point $c\in\overline{C}$ such that
$C_{0}:=\overline{C}\setminus\{c\}$ satisfies that $k[C_{0}]$ is a UFD, then one obtains the simpler bound
\[
N':=(d-1)^{n-1}\cdot
\left|
k[C_{0}]^{*}/(k[C_{0}]^{*})^{d}
\right|,
\]
where $n$ is the cardinality of $|Z \cap C_0|$.
\end{proposition}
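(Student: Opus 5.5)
We reduce to the affine count of Proposition~\ref{prop: boundedness affine case}. Fix an arbitrary closed point $c\in\overline{C}$ and set $C_0:=\overline{C}\setminus\{c\}$; since $c$ is arbitrary, minimizing over $c$ at the end yields $N$. By Theorem~\ref{th: main result AHS descritpion}, the set of $G$-isomorphism classes to be counted is in bijection with isomorphism classes of geometrically integral $\Phi$-pairs $(\Sd,\beta)$ over $\overline{C}$. By Proposition~\ref{proposition: reduction to three pp-div}, applied with $c_0:=c$ and any auxiliary $c_\infty\neq c$, we may assume $\Sd=\Sd_{0,\infty}$. By Remark~\ref{remark: reduction to the affine case}, such a pair is completely determined by the affine $\Phi$-pair $(\D_0,\alpha_0)$ over $C_\infty=\overline{C}\setminus\{c_\infty\}$. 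I would instead choose the roles so that the determining affine piece lives over $\overline{C}\setminus\{c\}$. Concretely, take the point removed from the affine chart to be $c$, i.e.\ apply the proposition with $c_\infty:=c$, so that $\D_0$ lives on $C_0=\overline{C}\setminus\{c\}$.

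Next we show that the assignment $(\Sd,\beta)\mapsto(\D_0,\beta|_{C_0})$ is injective on isomorphism classes, with image contained in the geometrically integral $\Phi$-pairs over $C_0$. The support of $\D_0$ is contained in $Z\cap C_0$ by Remark~\ref{remark: correspondence branch locus}. The key steps are as follows.

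First, if two affine pairs $(\D_0,\alpha_0)$ and $(\D_0',\alpha_0')$ are isomorphic via some $(\psi,\mathfrak f)$, then $\beta$ and $\beta'$, being the unique extensions of $\alpha_0$ and $\alpha_0'$ to $k(\overline{C})$, satisfy the analogous relation. Here $\psi$ extends to an automorphism of $\overline{C}$ fixing $c$, and this, combined with the reconstruction of $\D_\infty$ in the proof of Proposition~\ref{proposition: reduction to three pp-div}, gives an isomorphism of projective $\Phi$-pairs. In fact, for an upper bound we only need the weaker statement that the class of $(\Sd,\beta)$ is a function of the class of $(\D_0,\alpha_0)$.

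Second, geometric integrality of $(\Sd,\beta)$ implies that of $(\D_0,\alpha_0)$, by definition. The number of classes is therefore at most the number of affine classes over $C_0$ with branch locus $Z\cap C_0$. Proposition~\ref{prop: boundedness affine case} bounds this number by
\[
\bigl|(k[C_0\setminus Z]^*/k[C_0]^*)/(\cdot)^d\bigr|\cdot\bigl|k[C_0]^*/(k[C_0]^*)^d\bigr|,
\]
which is exactly the factor indexed by $c$ in $N$. Minimizing over $c$ gives $N$.

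For $N'$, when $k[C_0]$ is a UFD, the affine UFD bound gives $(d-1)^{n}\cdot|k[C_0]^*/(k[C_0]^*)^d|$. To lose one factor of $d-1$, I would use the degree condition \eqref{eq: equality degree 0} of Proposition~\ref{proposition: admissible structure}, together with the UFD normalization of Remark~\ref{remark: UFD equivalence}. Since every point of $C_0$ is principal, we can modify $\D_0$ by $\div$ of a suitable plurifunction, i.e.\ by principal divisors supported at a single branch point $y_1\in Z\cap C_0$ and at $c$ via Lemma~\ref{lem: bounded ry}. The aim is to show that the remaining residues $r_y/|\Stab_T(y)|$ modulo $\Z$ at the other $n-1$ points determine the class. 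The residue at one point should be forced modulo $\Z$ by the requirement that the total fractional part, including the contribution at $c$, is integral as in \eqref{eq: equality degree 0}, or by the relation $\div(\beta)=-\Sd(d)$. This leaves $(d-1)^{n-1}$ choices.

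This last step is the main obstacle. It requires care because the coefficient at $c$ itself is constrained by the requirement that $\D_\infty$ be a valid pp-divisor with the correct stabilizer data. In particular, the coefficient at $c$ is a priori free when $c\in Z$. I expect the cleanest route to be the following: first pick $c$ and then argue via \eqref{eq: equality degree 0} that fixing the $n-1$ fractional parts together with the degree-zero condition leaves at most one admissible value for the last fractional part. Here we use that the fractional part at $c$ is in turn determined by the fractional parts at the points of $Z\cap C_0$ through the extension $\beta$.
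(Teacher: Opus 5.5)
Your argument for the bound $N$ follows the paper's proof. You reduce via Proposition~\ref{proposition: reduction to three pp-div} and Remark~\ref{remark: reduction to the affine case} to the affine $\Phi$-pair $(\D_0,\alpha_0)$ on $\overline{C}\setminus\{c\}$, then apply Proposition~\ref{prop: boundedness affine case}. Your relabelling of the charts, so that the removed point is $c$, and your injectivity remark are what that reduction needs. The injectivity follows from Lemma~\ref{lem: principal phi pairs}, since $\alpha_0/\psi^*\alpha_0'=f^d$ is an identity in $k(\overline{C})^*$ and $\psi$ extends to $\overline{C}$.

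The part concerning $N'$ has a genuine gap, and the repair you sketch is circular. Relation \eqref{eq: equality degree 0} is a single condition on the $n+1$ fractional parts at the points of $(Z\cap C_0)\cup\{c\}$. In your last sentence you use that the fractional part at $c$ is determined by those at $Z\cap C_0$ through $\beta$. But then the relation imposes nothing on those $n$ values, and you are back to the affine count $(d-1)^n$.

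One factor $d-1$ is saved only when $v_c$ is known in advance. If $c\notin Z$, then $v_c\in\Z$ by Lemma~\ref{lem: fiber polyhedrons}, so \eqref{eq: equality degree 0} forces $\sum_{y\in Z}v_y[k(y):k]\in\Z$. This determines $v_{y_1}$ modulo $\Z$ for any $y_1\in Z$ with $[k(y_1):k]=1$. Lemma~\ref{lem: bounded ry} in the UFD case then leaves at most $(d-1)^{n-1}$ classes of $\D_0$, and the $\Phi$-structure factor is as in the affine case.

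When $c\in Z$ no argument can work, because the bound is false. Work over $k=\bar{k}$ with $\overline{C}=\P^1$, $Z=\{0,\infty\}$, $c=\infty$ and $d=5$; then $n=1$ and $N'=1$. Yet $\mu_5$ acting on $\P^1$ by $t\cdot[z_0:z_1]=[t^{a}z_0:z_1]$ with $a=1$ and with $a=2$ gives two $G$-normal curves with branch locus $Z$ that are not $G$-isomorphic. A $G$-isomorphism must preserve the fixed points $\{0,\infty\}$. Diagonal maps relate weight $a$ only to $a$, and antidiagonal maps relate it only to $-a$.

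The paper's own proof does not supply this step either: it only cites Proposition~\ref{prop: boundedness affine case}, which yields the exponent $n$. So $N'$ should be read with $c\notin Z$, and with some point of $Z$ of residue degree one (automatic if $k=\bar{k}$). Under that reading, the degree argument above is the missing ingredient.
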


\begin{proof}
Fixing the branch locus of the quotient morphism $C \to C/G \simeq \overline{C}$ amounts to fixing the support of the associated divisorial fan $\Sd$ over $\overline{C}$ (see Remark~\ref{remark: correspondence branch locus}). 
By Proposition~\ref{proposition: reduction to three pp-div} and Remark~\ref{remark: reduction to the affine case}, it suffices to count the possible pp-divisors $\D_{0}$ occurring in a divisorial fan of the form $\Sd_{0,\infty}$ as in~\eqref{eq: simplified form of div fans over a curve} (in Section~\ref{sec: projective case}) and its respective $\Phi$-structures. Hence, the assertion follows from Proposition~\ref{prop: boundedness affine case}.
\end{proof}

In a slightly different direction, assuming from now on that the ground field $k$ is algebraically closed, we investigate when the sets $\E_n$ are finite or infinite. Recall from \eqref{eq: def of En} in the introduction that, for every integer $n\ge 0$, we denote
\[
\E_n:=
\left\{
\begin{array}{c}
\text{$G$-isomorphism classes of $G$-normal projective curves with quotient $\overline{C}$}\\
\text{such that $C\to\overline{C}$ is a $G$-torsor outside a branch locus of cardinal $n$}
\end{array}
\right\}.
\]

\begin{theorem}\label{theorem: finiteness of En}
Assume that the ground field $k$ is algebraically closed.
Let $G=\mu_{{d}} \subset \Gm$ be a finite subgroup scheme with ${{d}} \geq 2$, and recall that we denote by $\varphi$ the Euler's totient function.
Let $\overline{C}$ be a smooth projective curve of genus $g$. Then:
\begin{enumerate}
    \item\label{theorem: finiteness of En part g=0 n01} If $g = 0$ and $n \in \{ 0,1\}$, then $\E_n$ is empty.
    \item\label{theorem: finiteness of En part g=0 n23}   
    If $g=0$ and $n\in\{2,3\}$, then
$|\E_3| < (d-1)^{\,2}$ and 
$|\E_{2}|=\varphi(d)$.
    \item\label{theorem: finiteness of En part g=0 n>3} If $g=0$ and $n\ge4$, then $\E_n$ is infinite.
    \item\label{theorem: finiteness of En part g>1 n=0} If $g\ge1$ and $n=0$, then $  |\E_n|\le {{d}}^{2g}-1$.
    \item\label{theorem: finiteness of En part g>1 n=1} If $g\ge1$ and $n=1$, then $\E_n$ is empty. 
    \item\label{theorem: finiteness of En part g=1 n>2} If $g=1$ and $n\ge 2$, then $\E_n$ is infinite.
    \item\label{theorem: finiteness of En part g>1 n>2} If $g\ge 2$ and $n\ge 2g+2$, then $\E_n$ is infinite.
\end{enumerate}
\end{theorem}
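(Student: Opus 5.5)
Over an algebraically closed field I will reduce everything to statements about rational functions. By Theorem~\ref{th: main result AHS descritpion} and Proposition~\ref{proposition: admissible structure}, every element of $\E$ is represented by a $\Phi$-pair $(\Sd_f,f^{-1})$ with $f\in k(\overline{C})^*$. By Lemma~\ref{lem: principal phi pairs}, two such pairs are isomorphic if and only if $f_1\equiv\psi^*f_2$ modulo $d$-th powers in $k(\overline{C})^*$, for some $\psi\in\Aut(\overline{C})$. By Lemma~\ref{lem: integrality of principal phi pairs}, geometric integrality means that $f$ is not an $m$-th power for any divisor $m>1$ of $d$; note that $k$ is algebraically closed, so integrality already implies geometric integrality. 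By Remark~\ref{remark: correspondence branch locus}, the branch locus is the set of points $c$ with $d\nmid \mathrm{ord}_c(f)$.

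Hence $\E_n$ is in bijection with the set of classes $[f]\in k(\overline{C})^*/(k(\overline{C})^*)^d$, taken modulo $\Aut(\overline{C})$, such that:
\begin{itemize}
\item the divisor $\div(f)$ reduces mod $d$ to a divisor $D$ with exactly $n$ points in its support;
\item $[f]$ is not an $m$-th power for any divisor $m>1$ of $d$.
\end{itemize}
The class $[f]$ is determined by $D\in \Div(\overline{C})\otimes\Z/d$ together with a choice of $d$-th root of a line bundle, so the fibre over a given $D$ is a torsor under $\Pic(\overline{C})[d]\simeq(\Z/d)^{2g}$. The degree condition forces $\sum_c \mathrm{ord}_c(f)\equiv0\pmod d$, which immediately gives parts~\ref{theorem: finiteness of En part g=0 n01} and~\ref{theorem: finiteness of En part g>1 n=1}. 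For $n=1$ the single coefficient must be $\equiv0$, contradicting $c\in Z$. For $g=0$ and $n=0$, we get $\Pic[d]=0$, so $f$ is a $d$-th power and hence not integral.

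For part~\ref{theorem: finiteness of En part g>1 n=0}, if $n=0$ then $[f]$ is a nonzero $d$-torsion class in $\Pic$, which gives at most $d^{2g}-1$ elements.

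For part~\ref{theorem: finiteness of En part g=0 n23}, I work on $\P^1$. When $n=2$, $\PGL_2$ moves the two points to $0,\infty$, and then $f=x^{r}$ with $r$ taken mod $d$. Integrality forces $\gcd(r,d)=1$, which gives $\varphi(d)$ choices. I must still check that the swap $x\mapsto 1/x$, which sends $r\mapsto -r$, does not identify distinct classes. Here I expect the role of the fixed $T/G$ automorphism to matter: the answer $\varphi(d)$ suggests that no further identification occurs. I will verify this directly through Lemma~\ref{lem: principal phi pairs}, since $x^{-r}\equiv x^{r}$ would require $x^{2r}$ to be a $d$-th power. When $n=3$, the three points are fixed to $0,1,\infty$, the exponents $a,b$ are nonzero mod $d$, and the third exponent is determined by the others. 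Counting the pairs $(a,b)$ with $a,b\not\equiv0$ and $a+b\not\equiv0$ gives strictly fewer than $(d-1)^2$.

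For the infinite cases, I produce a continuous family whose members are pairwise non-isomorphic because $\Aut(\overline{C})$ acting on configurations of points has positive-dimensional moduli. For part~\ref{theorem: finiteness of En part g=0 n>3}, I take $f=x(x-1)(x-\lambda)^{-1}\cdot$(further points) with all exponents coprime to $d$. The cross-ratio $\lambda$ of the branch points is an invariant, and it takes infinitely many values because $\PGL_2$ preserves cross-ratios and only finitely many $\lambda$ can be matched with a given one.

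For part~\ref{theorem: finiteness of En part g=1 n>2}, on an elliptic curve I choose $f$ with $\div(f)=p-q$ plus $d$-multiples, letting $p-q$ vary. Since $\Aut$ is finite modulo translations, $p-q$ itself is not equivalent under translation, but its class in $\Pic^0$ is what varies. To handle $n>2$, I add fixed pairs $p_i-q_i$.

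The main obstacle is part~\ref{theorem: finiteness of En part g>1 n>2}, where I must exhibit infinitely many degree-zero divisors, reduced mod $d$ with $n\ge 2g+2$ support points, that are principal. Since $\Aut(\overline{C})$ is finite for $g\ge2$, it suffices to show that the set of supports of principal divisors of this type is infinite. For this I would use a Bertini argument on a linear system. Take $f$ in $H^0(L)$ for a line bundle $L$ of degree $\ge 2g+1$, which is very ample, together with a fixed section, so that $\div(f)=\div(s_1)-\div(s_2)$. A general member of the pencil has reduced zeros, and $s_2$ can be chosen with reduced zeros as well, so every coefficient is $\pm1$. This gives infinitely many distinct supports of size $2\deg L\ge n$, and I adjust the degree or make the divisors collide to reach exactly $n$. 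Finally, $\pm1$ exponents guarantee integrality.
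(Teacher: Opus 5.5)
Your reduction to classes $[f]\in k(\overline{C})^{*}/(k(\overline{C})^{*})^{d}$ modulo $\Aut(\overline{C})$ is sound. It handles (i), (iv), (v) and the bound on $|\E_3|$ differently from the paper, and more cleanly. The degree relation $\sum_c\mathrm{ord}_c(f)\equiv 0 \pmod d$ replaces the paper's fppf exact sequence and torsor-extension argument. Your count $(d-1)(d-2)$ gives the strict inequality for every $d$, whereas the paper's appeal to $\mathrm{lcm}(d_0,d_1)=d$ excludes nothing when $d$ is prime; the operative constraint is $a+b\not\equiv 0$. (Minor: $\Pic(\overline{C})[d]\simeq(\Z/d)^{2g}$ fails when $p\mid d$, but only the bound $d^{2g}$ is used.)

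The first genuine gap is the equality $|\E_2|=\varphi(d)$. Your proposed check only tests $\psi=\mathrm{id}$, but Lemma~\ref{lem: principal phi pairs} quantifies over all $\psi\in\Aut(\P^1)$. For $\psi^{*}u=u^{-1}$ one has $\psi^{*}(u^{r})=u^{-r}$, so $(\Sd_{u^{r}},u^{-r})\simeq(\Sd_{u^{-r}},u^{r})$ with $h=1$. Moreover $(\Sd_{u^{-r}},u^{r})\simeq(\Sd_{u^{d-r}},u^{r-d})$ with $\psi=\mathrm{id}$ and $h=u$. Automorphisms of $T/G$ cannot undo this: extra symmetries only merge classes, and the $T$-equivariant ones are translations anyway. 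Concretely, $t\cdot[z_0:z_1]=[t^{a}z_0:z_1]$ and $t\cdot[z_0:z_1]=[t^{-a}z_0:z_1]$ are $G$-isomorphic via $[z_0:z_1]\mapsto[z_1:z_0]$. Carrying out your verification therefore gives $|\E_2|=|(\Z/d)^{*}/\{\pm1\}|=\varphi(d)/2$ for $d\ge 3$, so the stated equality cannot be proved. The paper's argument has the same omission: it never checks that distinct $r_0$ give non-isomorphic $\Phi$-pairs.

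The infinite cases also break down for odd $n$. Your own degree relation shows that $\E_n\neq\varnothing$ requires $n$ residues, all nonzero mod $d$, summing to $0$ mod $d$. For $d=2$ and $n$ odd this is impossible, so $\E_n=\varnothing$ for every $g$, and (iii), (vi), (vii) are false in that case whatever construction one uses. Your constructions are in any case too rigid:
\begin{itemize}
\item exponents coprime to $d$ cannot sum to zero when $d$ is even and $n$ is odd (for $d=4$, $n=5$ one needs residues such as $(1,1,2,-2,-2)$);
\item adding pairs $p_i-q_i$ only produces even $n$;
\item in (vii), a divisor $\div(s_1)-\div(s_2)$ with all coefficients $\pm1$ has even support of size $2\deg L\ge 4g+2$. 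It therefore misses every odd $n$ and every even $n$ with $2g+2\le n<4g+2$, and ``making the divisors collide'' is not a controlled step.
\end{itemize}
The missing idea for (vii) is the paper's: concentrate the polar part at one point. Take $D\in|(n-1)c|$ reduced and avoiding $c$, so that $D-(n-1)c$ has exactly $n$ points in its support. Even this only lands in $\E_n$ when $d\nmid n-1$, since otherwise $c$ is not a branch point; the same limitation applies to the paper's $\{c_1\}+\cdots+\{c_{n-1}\}-(n-1)\{c_n\}$ in (vi). For $d\ge3$ and $g\le 1$ you can reach odd $n$ with residues $(1,1,-2,1,-1,\dots,1,-1)$. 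This works because degree-zero divisors on $\P^1$ are principal and degree-zero classes on an elliptic curve are divisible by $d$.
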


\begin{proof}
\begin{itemize}
\item 
Proof of~\ref{theorem: finiteness of En part g=0 n01}.
The result is classical (see, for instance, the proof of \cite[Lemma 4.10]{FM26}).
Let $Y$ be a curve. Using the short exact sequence (for the \emph{fppf} topology) of \cite[Corollary III.4.5.5]{DG70}
\begin{equation} \label{eq: exact sequence GD}
0 \to \mathscr{O}_{Y}(Y)^*/(\mathscr{O}_{Y}(Y)^*)^{{{d}}}
   \to \H^1(Y,\mu_{{{d}}})
   \to \Pic(Y)[{{{d}}}]
   \to 0,
\end{equation}
we obtain $\H^1(Y,\mu_{{{d}}}) \simeq \Pic(Y)[{{{d}}}]$
whenever $Q_Y:=\mathscr{O}_{Y}(Y)^*/(\mathscr{O}_{Y}(Y)^*)^{{{d}}}$
is trivial. Equivalently, the isomorphism classes of $\mu_d$-torsors over $Y$ are in bijection with the isomorphism classes of line bundles $\L$ on $Y$ such that $\L^{\otimes d}\simeq \O_Y$.
Now, for $Y=\P^1$ or $\A^1$, one has $Q_Y=\{1\}$, while
$\Pic(\P^1)\simeq \Z$ and $\Pic(\A^1)=\{0\}$.
In particular, $\Pic(Y)[d]=\{0\}$ in both cases, so every $\mu_d$-torsor $X \to Y$ with $Y=\P^1$ or $\A^1$ is trivial. This contradicts the assumption that $X$ is a curve (hence irreducible and reduced). Therefore, $\E_n=\varnothing$ for $n\in\{0,1\}$.

\smallskip

\item 
Proof of~\ref{theorem: finiteness of En part g=0 n23}.  For $n=3$, any integral $\Phi$-pair $(\Sd,\beta)$ over $\P^{1}$ such that $[(\Sd,\beta)] \in \mathcal{S}_{3}$ can be assumed to have support given by $\mathrm{Supp}(\Sd,\P^{1})=\{0,1,\infty\}$. 
Indeed, any three points in $\P^{1}$ can be sent to $\{0,1,\infty\}$ by an automorphism of $\P^{1}$, which also move the $\Phi$-structure (see Remark~\ref{remark: reduction by Aut(C)}). Besides, since $k[C_0]^{*}=k[C_{\infty}]^{*}=k^{*}$, all $\Phi$-pairs $(\Sd,\beta')$ define the same $G$-normal curve (up to $G$-isomorphism) by Lemma~\ref{lemma: number of structures}. Hence, it suffices to count the number of divisorial fans $\Sd$ supported on $\{0,1,\infty\}$ and admitting a $\Phi$-structure. By Proposition~\ref{proposition: admissible structure}, these divisorial fans have pp-divisors (see \eqref{eq: simplified form of div fans over a curve} in Section~\ref{sec: projective case} for notational details)
\[
\D_{0}:=\left[\frac{r_{0}}{{{d}}_{0}}\right]\otimes\{0\}+\left[\frac{r_{1}}{{{d}}_{1}}\right]\otimes\{1\}+\varnothing\otimes\{\infty\}\quad
\mathrm{and} \quad \D_{0}:=\varnothing\otimes\{0\}+\left[\frac{r_{1}}{{{d}}_{1}}\right]\otimes\{1\}+\left[-\frac{r_{0}}{{{d}}_{0}}-\frac{r_{1}}{{{d}}_{1}}\right]\otimes\{\infty\}.
\] 
This reduces the classification to the choice of the pp-divisor $\D_{0}$, since $\D_{\infty}$ is then determined by $\D_{0}$. 
Thus, by Proposition~\ref{prop: boundedness affine case}, the number of possible polyhedra gives $\lvert \E_{3} \rvert=\lvert \mathcal{S}_{3} \rvert \le(d-1)^{2}$. 
Following Remark~\ref{remark: boundedness affine case}, the equality never holds since $\mathrm{lcm}(d_{0},d_{1})=d$ is a necessary condition.

Similarly, for $n=2$ and taking $\mathrm{Supp}(\Sd,\P^{1})=\{0,\infty\}$, Proposition~\ref{proposition: admissible structure} implies that any divisorial fan supported on $\{0,\infty\}$ admitting a $\Phi$-structure has pp-divisors
\[
\D_{0}
=
\left[\frac{r_{0}}{{{d}}_{0}} \right]\otimes\{0\}
+
\varnothing\otimes\{\infty\},
\qquad
\D_{\infty}
=
\varnothing\otimes\{0\}
+
\left[-\frac{r_{0}}{{{d}}_{0}}\right]\otimes\{\infty\}.
\] 
Moreover, from Remark~\ref{remark: UFD equivalence}, we can assume that $\mathrm{gcd}(r_{0},d_{0})=1$ and $0<r_{0}<d_{0}$. This yields $\lvert \E_{2} \rvert = \lvert \mathcal{S}_{2} \rvert \le \varphi({{{d}_{0}}})$. Let $(\Sd,\beta)$ be a $\Phi$-pair over $\overline{C}$. By definition, $(\Sd,\beta)$ is an integral $\Phi$-pair if and only if the induced $\Phi$-pairs $(\D_{0},\alpha_{0})$ and $(\D_{\infty},\alpha_{\infty})$ are integral. Since $\Loc(\D_{0})\simeq \Loc(\D_{\infty})\simeq \A^{1}$, the $\Phi$-pairs $(\D_{0},\alpha_{0})$ and $(\D_{\infty},\alpha_{\infty})$ are integral if and only if $d_{0}=d$ by Corollary~\ref{corollary: integral pair non non-constant regular affine}. Hence, $\lvert \E_{2} \rvert = \lvert \mathcal{S}_{2} \rvert \le \varphi({{d}})$ and the equality holds. Indeed, let $0<r_{0}<{{d}}$ be such that $\gcd(r_{0},{{d}})=1$. Let us denote by $\Sd_{r_{0}}$ the divisorial fan corresponding to $r_{0}$. By Proposition~\ref{proposition: integral divisorial pair} and Corollary~\ref{corollary: integral pair non non-constant regular affine}, any $\Phi$-pair $(\Sd_{r_{0}},\beta)$ is integral and defines a $G$-normal curve by Theorem~\ref{th: main result AHS descritpion}. We thus obtain $\lvert \E_{2} \rvert = \varphi({{d}})$.

\smallskip

\item  
Proof of~\ref{theorem: finiteness of En part g=0 n>3}. As in the previous case, all $\Phi$-structures are isomorphic over $\mathbb{P}^{1}$. Therefore, for $n \geq 4$, up to applying an automorphism of $\mathbb{P}^{1}$, it suffices to count the number of divisorial fans $\Sd$ over $\overline{C}=\P^1$ supported on $\{0,1,\infty, p_1,\ldots,p_{n-3}\}$ and admitting a $\Phi$-structure. 
Following the conventions of \eqref{eq: simplified form of div fans over a curve} in Section~\ref{sec: projective case}, such a divisorial fan satisfies 
\[
\frac{r_{0}}{d_{0}}+\frac{r_{1}}{d_{1}}+\cdots+\frac{r_{e}}{d_{e}}+\frac{r_{\infty}}{d_{\infty}}=0,
\]
by Proposition~\ref{proposition: admissible structure}. Moreover, Proposition~\ref{proposition: integral divisorial pair} and Corollary~\ref{corollary: integral pair non non-constant regular affine} imply that a $\Phi$-pair $(\Sd,\beta)$ is integral if and only if at least one of the integers ${{d}}_{i}$ is equal to ${{d}}$ on each $\D_{0}$ and $\D_{\infty}$. Hence, for any given support, we can always find a divisorial fan $\Sd$ with that support that admits a $\Phi$-structure and for which every $\Phi$-pair is integral.
Note that each additional point $p_{i} \in \P^1 \setminus \{0,1,\infty \}$ introduces a continuous parameter in the choice of the divisorial fan. Since every element $\psi \in \mathrm{Aut}(\mathbb{P}^{1})=\PGL_{2}$ is uniquely determined by the images of three points,  there are infinitely many non-isomorphic such divisorial fans, and hence $\E_n \simeq \mathcal{S}_{n}$ is infinite.

\smallskip

\item 

Proof of~\ref{theorem: finiteness of En part g>1 n=0}. 
Since $\E_0$ is the set of isomorphism classes of integral $\mu_{{{d}}}$-torsors over $\overline{C}$, it follows that $\E_0 \subsetneq \H^1(\overline{C},\mu_{{{d}}})$, as the trivial $\mu_{{{d}}}$-torsor is not a curve (it is disconnected or non-reduced).
Using the exact sequence \eqref{eq: exact sequence GD}, 
we obtain $\H^1(\overline{C},\mu_{{{d}}}) \simeq \Pic(\overline{C})[{{{d}}}]$ and hence an injection $\E_0 \hookrightarrow \Pic(\overline{C})[{{{d}}}]$. The classical short exact sequence (see \cite[\href{https://stacks.math.columbia.edu/tag/03RN}{Tag 03RN}]{stacks-project})
\[
0 \to \mathrm{Jac}(\overline{C})
   \to \mathrm{Pic}(\overline{C})
   \xrightarrow{\deg} \mathbb{Z}
   \to 0
\]
yields
$\E_0 \hookrightarrow \mathrm{Jac}(\overline{C})[d]$.
The latter is the ${{{d}}}$-torsion subgroup of the group of $k$-points of the abelian variety $\mathrm{Jac}(\overline{C})$, hence finite of cardinality at most ${{{d}}}^{2g}$, by \cite[\href{https://stacks.math.columbia.edu/tag/03RP}{Tag 03RP}]{stacks-project}. 
Thus $|\E_0|\le {{{d}}}^{{2g}}-1$ (we always exclude the trivial $\mu_d$-torsor).

\smallskip

\item 
Proof of~\ref{theorem: finiteness of En part g>1 n=1}. 
The problem reduces to showing that every $\mu_d$-torsor on the punctured curve $C_0:=\overline{C}\setminus\{c\}$ extends uniquely to a $\mu_d$-torsor on $\overline{C}$.
Since
\[
\mathscr O_{\overline C}(\overline C)^\times/
\bigl(\mathscr O_{\overline C}(\overline C)^\times\bigr)^d
=\{1\}=
\mathscr O_{C_0}(C_0)^\times/
\bigl(\mathscr O_{C_0}(C_0)^\times\bigr)^d,
\]
the exact sequence \eqref {eq: exact sequence GD} yields canonical identifications
\[
\H^1(\overline C,\mu_d)\simeq \Pic(\overline C)[d],
\qquad
\H^1(C_0,\mu_d)\simeq \Pic(C_0)[d].
\]
On the other hand, restriction induces an exact sequence
\[
\mathbb Z \longrightarrow \Pic(\overline C)
\xrightarrow{\pi}
\Pic(C_0)\longrightarrow 0,
\]
where $1\in\mathbb Z$ maps to the class of $(c)$.
Applying the snake lemma to multiplication by $d$ gives 
\[
0\longrightarrow \Pic(\overline C)[d]
\longrightarrow \Pic(C_0)[d]
\xrightarrow{\delta}
\mathbb Z/d\mathbb Z .
\]
It therefore suffices to show that $\delta=0$. Let
$(x) \in \Pic(C_0)[d]$ and choose a lift
$(\widetilde x) \in \Pic(\overline C)$ with $\pi((\widetilde x))=(x)$.
Since $d\cdot(x)=0$, we have
$d\cdot(\widetilde x)\in \ker(\pi)$, hence
$d\cdot(\widetilde x)=n\cdot(c)$
for some $n\in\mathbb Z$.
Taking degrees, we obtain $d\cdot\deg((\widetilde x))=n$,
so $d$ divides $n$. By definition, $\delta((x))=\overline n\in \mathbb Z/d\mathbb Z$,
and therefore $\delta((x))=\overline{0}$.
Thus $\delta$ is trivial, and the above exact sequence yields
$\Pic(\overline C)[d]\simeq \Pic(C_0)[d].$
Equivalently, $\H^1(\overline C,\mu_d)\simeq \H^1(C_0,\mu_d)$,
which proves the claim.

\smallskip

\item
Proof of~\ref{theorem: finiteness of En part g=1 n>2}. 
Assume that $g=1$ and $n\ge 2$. We show that $\E_n$ is infinite. \\
Since $\overline{C}$ is an elliptic curve, fix a point
$0\in\overline{C}$ as the identity of its group law.
Let $c_{1},\dots,c_{n}$ be pairwise distinct points of $\overline{C}$.
A divisor
\[
D:=m_{1}\{c_{1}\}+\cdots+m_{n}\{c_{n}\}
\]
is principal if and only if
\[
m_{1}+\cdots+m_{n}=0
\quad\text{and}\quad
m_{1}c_{1}+\cdots+m_{n}c_{n}=0
\in J(\overline{C})\simeq\overline{C}.
\]
Assume that $n=2$. Then $D$ is principal if and only if
$m_{1}+m_{2}=0$ and
$m_{1}c_{1}+m_{2}c_{2}=0\in\overline{C}$.
Hence $m_{2}=-m_{1}$, and the latter condition becomes
\[
m_{1}(c_{1}-c_{2})=0\in\overline{C}.
\]
Since $c_{1}\neq c_{2}$, we have 
$m_{1}\ge2$.
Thus the set of principal divisors with support of cardinality $2$ is
\[
\mathcal{Z}_{2}
=
\{m\{c\}-m\{c+e\}\mid
m\ge2,\ (c,e)\in\overline{C}\times\overline{C}[m]\}.
\]
By Lemma~\ref{lem: principal phi pairs}, the divisors
$D_{1}=m\{c\}-m\{c+e\}$ and
$D_{2}=m\{0\}-m\{e\}$ define isomorphic $\Phi$-pairs.
Hence it suffices to consider the set
\[
\mathcal{Z}_{2}(0)
:=
\{m\{0\}-m\{e\}\mid
m\ge2,\ e\in\overline{C}[m]\}.
\]
Now let
\[
D_{r,e}
:=
(dr+1)\{0\}-(dr+1)\{e\},
\qquad r\ge1,\ e\in\overline{C}[m].
\]
We claim that these divisors give rise to infinitely many pairwise non-isomorphic $\Phi$-pairs.
Indeed, by Lemma~\ref{lem: principal phi pairs}, if
$D_{1},D_{2}\in\mathcal{Z}_{2}(0)$ define isomorphic $\Phi$-pairs, then there exists
$\psi\in\Aut(\overline{C})$ such that every coefficient of
$D_{1}-\psi^{*}D_{2}$ is divisible by $d$.
Thus, if
$D_{r,e}$ and $D_{r',e'}$ define isomorphic $\Phi$-pairs, then the coefficients of
\[
D_{r,e}-\psi^{*}D_{r',e'}
=
d(r-r')\{0\}
-(dr+1)\{e\}
+(dr'+1)\{\psi(e')\}
\]
are divisible by $d$.
Since $dr+1\equiv dr'+1\equiv1\pmod d$, this is possible only if
$\psi(e')=e$.
As there are only finitely many automorphisms fixing the origin, the divisors
$D_{r,e}$ yield infinitely many pairwise non-isomorphic $\Phi$-pairs.\\
It remains to prove that these $\Phi$-pairs are integral.
Let $f_{r,e}\in k(\overline{C})^{*}$ satisfy
$\div(f_{r,e})=D_{r,e}$.
Suppose, for contradiction, that
$(\Sd_{f_{r,e}},f_{r,e}^{-1})$
is not integral.
By Lemma~\ref{lem: integrality of principal phi pairs},
there exist
$f'\in k(\overline{C})^{*}$ and a divisor
$m>1$ of $d$ such that
$f'^{\,m}=f_{r,e}$.
Hence
\[
m\,\div(f')
=
\div(f_{r,e})
=
(dr+1)\{0\}
-(dr+1)\{e\},
\]
so $m\mid(dr+1)$.
Since $\gcd(dr+1,d)=1$, this contradicts
$m\mid d$.
Therefore,
$(\Sd_{f_{r,e}},f_{r,e}^{-1})$
is integral for every $r$ and $e$, and consequently
$\E_{2}$ is infinite.

Assume now that $n\ge3$. We consider principal divisors of the form
\[
D=\{c_{1}\}+\cdots+\{c_{n-1}\}-(n-1)\{c_{n}\},
\]
constructed as follows. If $n=3$, choose distinct points
$c_{1},c_{2}\in\overline{C}$ such that
$c_{1}+c_{2}$ is non-torsion, and let
$c_{3}$ be any solution of
$2x=c_{1}+c_{2}$.
If $n\ge4$, choose pairwise distinct non-torsion points
$c_{1},\dots,c_{n-2}\in\overline{C}$, and define
\[
c_{n}:=c_{1}+\cdots+c_{n-2},
\qquad
c_{n-1}:=(n-2)c_{n}.
\]
As above, it suffices to consider divisors with $c_{1}=0$. Thus we may restrict to automorphisms fixing the origin, so the orbit
\[
\{\psi^{*}D\mid\psi\in\Aut_{0}(\overline{C})\}
\]
is finite.
We claim that these divisors give rise to infinitely many pairwise non-isomorphic $\Phi$-pairs.
Indeed, by Lemma~\ref{lem: principal phi pairs}, if two such divisors
$D_{1}$ and $D_{2}$ define isomorphic $\Phi$-pairs, then every coefficient of
$D_{1}-\psi^{*}D_{2}$ is divisible by $d$ for some
$\psi\in\Aut_{0}(\overline{C})$.
This is possible only if
$D_{1}=\psi^{*}D_{2}$.
Otherwise, there exists a point $c_{i,2}$ appearing in
$\psi^{*}D_{2}$ but not in $D_{1}$, so that
$D_{1}-\psi^{*}D_{2}$ has a coefficient equal to~$1$, a contradiction.
Hence these divisors yield infinitely many pairwise non-isomorphic $\Phi$-pairs.
Finally, if $\div(f)=D$, then
$(\Sd_{f},f^{-1})$
is integral by Lemma~\ref{lem: integrality of principal phi pairs}, since $D$ has coefficients equal to~$1$.

\smallskip

\item 
Proof of~\ref{theorem: finiteness of En part g>1 n>2}. 
Assume that $g\ge2$ and $n\ge2g+2$. We show that $\E_n$ is infinite.\\
Fix a point $c\in\overline{C}$. Since
$\deg((n-1)c)=n-1\ge2g+1$,
the divisor $(n-1)c$ is very ample (\cite[Ch.~IV, Cor.~3.2]{Har}). Thus, Bertini's theorem implies that a dense open subset of the complete linear system $|(n-1)c|$ parametrizes reduced divisors. Intersecting with the complement of the hyperplane consisting of divisors containing $c$, we obtain a dense open subset parametrizing divisors
$D=c_1+\cdots+c_{n-1}$,
where the $c_i$ are pairwise distinct and different from $c$. Consequently, there exist infinitely many principal divisors of the form
$D-(n-1)c$,
whose support consists of exactly $n$ distinct points.
By Lemma~\ref{lem: principal phi pairs}, if two divisors
\[
E_{1}:=D_{1}-(n-1)c_{1}
\qquad\text{and}\qquad
E_{2}:=D_{2}-(n-1)c_{2}
\]
define isomorphic $\Phi$-pairs, then $d$ divides every coefficient of
$E_{1}-\psi^{*}E_{2}$ for some $\psi\in\Aut(\overline{C})$.
This is possible only if
$E_{1}=\psi^{*}E_{2}$, since otherwise
$E_{1}-\psi^{*}E_{2}$ has a coefficient equal to~$1$.
Hence the divisors $D-(n-1)c$ give rise to infinitely many pairwise non-isomorphic $\Phi$-pairs, because $\Aut(\overline{C})$ is finite for $g\ge2$~\cite{Sch38}.
Moreover, since $D-(n-1)c$ has a coefficient equal to~$1$, the corresponding $\Phi$-pair is integral by Lemma~\ref{lem: integrality of principal phi pairs}. Therefore, $\E_n$ is infinite.
\end{itemize}
\end{proof}

\subsection{A complete description of \texorpdfstring{$\E_{2}$}{E2} over \texorpdfstring{$\P^1$}{P1}} \label{sec: complete description of E2 and E3}
In this section, we describe the set $\E_2$ under the assumption that the ground field $k$ is algebraically closed.
We adopt the convention 
\[\Spec(k[u])=U_{0}:=\{[z_{0},z_{1}]\in \P^1 \mid z_{1}\neq 0\}\ \ \text{and}\ \ \Spec(k[w])=U_{\infty}:=\{[z_{0},z_{1}]\in \P^1 \mid z_{0}\neq 0\},\] with gluing given by the canonical morphisms $k[u]\to k[u,w]/(uw-1)$ and $k[w]\to k[u,w]/(uw-1)$. 

By Proposition~\ref{proposition: admissible structure}, the divisorial fan
\[
\Sd:=\{\D_{0},\D_{\infty},\D_{0}\cap\D_{\infty}\} \in \mathcal{S}_2 \simeq \mathcal{E}_2,
\]
with
\[ 
\D_{0} := \left[\frac{r_{0}}{d_{0}}\right]\otimes\{0\}+\varnothing\otimes\{\infty\},
\text{ and }
\D_{\infty}:=\varnothing\otimes\{0\}
+\left[-\frac{r_{0}}{d_{0}}\right]\otimes\{\infty\},
\]
is the unique one admitting a $\Phi$-structure (unique up to isomorphism by Lemma~\ref{lemma: number of structures}). Moreover, by Lemmas~\ref{lem: fiber polyhedrons} and~\ref{lem: bounded ry} and Remark~\ref{remark: UFD equivalence}, we may assume
\[
0<r_{0}<{{d}}_{0},\qquad \gcd(r_{0},{{d}}_{0})=1,\qquad {{d}}_{0}\mid {{d}}.
\]
Finally, Proposition~\ref{proposition: integral divisorial pair} together with Corollary~\ref{corollary: integral pair non non-constant regular affine} imply that ${{d}}_{0}$ is equal to $d$. Besides, by Theorem~\ref{theorem: finiteness of En}~\ref{theorem: finiteness of En part g=0 n23}, there are exactly $\varphi({{d}})$ pairwise non-isomorphic $G$-normal curves in this case. The corresponding divisorial fans are parametrized by integers $0<r<{{d}}$ with $\gcd(r,{{d}})=1$, and are given by
\[
\Sd_{r}
:=
\Bigl\{
\underbrace{\left[\frac{r}{{{d}}}\right]\otimes\{0\}+\varnothing\otimes\{\infty\}}_{\D_{0}},
\;
\underbrace{\varnothing\otimes\{0\}+\left[-\frac{r}{{{d}}}\right]\otimes\{\infty\}}_{\D_{\infty}},
\;
\underbrace{\varnothing\otimes\{0\}+\varnothing\otimes\{\infty\}}_{\D_{0}\cap\D_{\infty}}
\Bigr\}  \in \mathcal{S}_2 \simeq \mathcal{E}_2.
\]

We follow the conventions of Section~\ref{sec: Explicit construction of XD from a pp-divisor and examples}. The algebras $A[C_{0},\D_{0}]$ and $A[C_{\infty},\D_{\infty}]$ are generated by
\[
x:=\mathfrak{X}_{1}, \quad
x_{l_{s}}:=
\frac{\mathfrak{X}_{l_{s}}}{
u^{s}},
\quad
a:=
\frac{\mathfrak{X}_{{{d}}}}{
u^{r}
},
\]
and
\[
y:=\mathfrak{X}_{-1}, \quad
y_{l_{s}}:=
\frac{\mathfrak{X}_{-l_{s}}}{
w^{s}},
\quad
a:=
w^{r}\mathfrak{X}_{{{d}}
},
\]
respectively. The following lemma shows that each of these algebras is in fact generated by only four of these elements. It plays a central role in the construction of the algebras associated with an integral $\Phi$-pair $(\D,\alpha)$ over a smooth affine curve $C_{0}$.

\begin{lemma}
Let $r,d$ be positive integers with $\gcd(r,d)=1$, and set
$l_s:=\min\{l\in\N\mid lr\ge sd\}$.
If $r>1$, then there exists $1\le j<r$ such that $l_jr=jd+1$. Moreover, for every $s\ge1$, there exist positive integers $M_s,N_s$ satisfying
$M_sj=N_sr+s$ and $M_sl_j=N_sd+l_s$.
\end{lemma}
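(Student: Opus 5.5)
The plan is to treat the lemma as elementary arithmetic on ceilings. First, observe that $l_s=\lceil sd/r\rceil$, so $l_s r = sd + t_s$, where $t_s\in\{0,1,\dots,r-1\}$ is the least nonnegative residue of $-sd$ modulo $r$.

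\emph{Existence of $j$.} Since $\gcd(r,d)=1$ and $r>1$, $d$ is invertible modulo $r$. Choose $j\in\{1,\dots,r-1\}$ with $jd\equiv -1 \pmod r$; such $j$ exists and is nonzero modulo $r$ because $-1\not\equiv 0$ when $r>1$. Then $(jd+1)/r$ is an integer $L$ with $Lr\ge jd$. Moreover $(L-1)r=jd+1-r<jd$ because $r\ge 2$. Hence $L=l_j$ and $l_jr=jd+1$. This is the straightforward part.

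\emph{The pair $(M_s,N_s)$.} View the two required identities as a $2\times 2$ linear system in $(M,N)$:
\[
jM-rN=s,\qquad l_jM-dN=l_s .
\]
Its determinant is $-jd+rl_j=1$ by the first step. So Cramer's rule gives a unique integer solution:
\[
M_s=rl_s-sd=t_s,\qquad N_s=jl_s-sl_j=\frac{j\,t_s-s}{r}.
\]
The second expression for $N_s$ comes from substituting $l_s=(sd+t_s)/r$ and $l_j=(jd+1)/r$. Integrality is automatic from the unimodular determinant; one can also check it directly via $jt_s\equiv -jsd\equiv s \pmod r$.

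\emph{Positivity: the main obstacle.} Everything reduces to showing $t_s>0$ and $jt_s>s$.
\begin{itemize}
\item $t_s>0$ holds exactly when $r\nmid s$. For $s\equiv 0\pmod r$ one gets $M_s=0$ and $N_s=-s/r<0$. By uniqueness of the solution, no positive solution exists in that case.
\item The identity $N_{s+r}=N_s-1$ (while $M_{s+r}=M_s$) shows that $N_s$ becomes negative for large $s$.
\end{itemize}
So the statement can only hold for a restricted range of $s$, and I would first pin down the intended range; most plausibly $1\le s\le r-1$, i.e.\ the indices $l_s$ actually used as generators in Section~\ref{sec: Explicit construction of XD from a pp-divisor and examples}. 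In that range $t_s\ge 1$, so $M_s>0$. Also $jt_s\equiv s\pmod r$ with $1\le s<r$, so $jt_s\ge s$, and $N_s\ge 0$, with equality exactly when $jt_s=s$.

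I would then examine the equality case $jt_s=s$ separately. There $N_s=0$ and $l_s=M_sl_j$, which still expresses $x_{l_s}$ as a power of $x_{l_j}$. That is exactly what is needed for the generation statement the lemma serves.

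Accordingly, the written proof would state the explicit formulas $M_s=t_s$ and $N_s=(jt_s-s)/r$, prove integrality via the determinant $1$, and prove positivity (nonnegativity in the boundary case) for $1\le s<r$. It would remark that the case $r\mid s$ is handled directly by $x^{d}=a\,u^{r}$-type relations rather than by this lemma.
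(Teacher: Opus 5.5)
Your analysis is correct, and it is more accurate than the paper's own argument. The statement as written is false, and your Cram\'er's-rule computation is exactly what exposes this.

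\textbf{The paper's route and where it fails.} The paper proceeds by B\'ezout. It takes $Ar=Bd+1$ and reduces $B$ modulo $r$ to get $j$ (strictly, one gets $l_j=A-Qd$, not $A$). It then writes the general solution $M_s(\lambda)=\lambda r+\tilde M_s$, $N_s(\lambda)=\lambda j+\tilde N_s$ of the first equation and solves the second for $\lambda$, using $l_jr-jd=1$.
\begin{itemize}
\item That $\lambda$ is forced. Hence $(M_s(\lambda),N_s(\lambda))$ is precisely your unique solution $M_s=rl_s-sd$, $N_s=jl_s-sl_j$.
\item The paper never checks the sign of this solution. The positivity of $\tilde M_s,\tilde N_s$ is irrelevant once $\lambda$ is fixed.
\item Concretely, take $r=2$, $d=3$. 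Then $j=1$, $l_1=2$, $l_2=3$, and the systems force $(M_1,N_1)=(1,0)$ and $(M_2,N_2)=(0,-1)$.
\item More generally, for every $r>1$ the index $s=j$ gives $t_j=1$ and hence $(M_j,N_j)=(1,0)$.
\end{itemize}
So the positivity claim fails even inside your range $1\le s\le r-1$. You do account for this in your boundary case $jt_s=s$. It is worth saying explicitly, though, that restricting $s$ alone does not rescue the statement: ``positive'' must become ``nonnegative'' for $N_s$.

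\textbf{What your route buys.} Your approach gives the closed form $M_s=t_s\in\{0,\dots,r-1\}$ and $N_s=(jt_s-s)/r$. From this the exact range of validity can be read off. The paper's parametrization proves existence of an integer solution but hides its uniqueness, which is how the sign error slipped through.

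\textbf{A simplification for the application.} In the generation argument after the lemma, $a$ and $b=a^{-1}$ both lie in the algebra. So the identity $x_{l_s}=x_{l_j}^{M_s}b^{N_s}$ only requires $M_s\ge 0$, with $N_s\in\mathbb{Z}$ arbitrary. The right corrected lemma is therefore: for every $s\ge 1$ there are unique integers $M_s\ge 0$ and $N_s$ satisfying both identities, namely $M_s=rl_s-sd$ and $N_s=jl_s-sl_j$. This also covers $r\mid s$, where it gives $x_{l_s}=a^{s/r}$. You then need neither $N_s\ge 0$ nor a separate treatment of that case.
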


\begin{proof}
Since $r$ and $d$ are coprime, there exist positive integers $A$ and $B$ such that $Ar=Bd+1$. Writing $B=rQ+j$ with $0<j<r$, we obtain $A=l_j$, and hence $l_jr=jd+1$. In particular, $\gcd(j,r)=1$, so for every $s\ge1$ there exist positive integers $\tilde M_s,\tilde N_s$ such that
\[
\tilde M_sj=\tilde N_sr+s.
\]
The general solution is given by
\[
M_s(\lambda)=\lambda r+\tilde M_s,\qquad
N_s(\lambda)=\lambda j+\tilde N_s.
\]
It remains to choose $\lambda$ so that $M_s(\lambda)l_j=N_s(\lambda)d+l_s$. This amounts to
\[
\lambda(l_jr-jd)=l_s-\tilde M_sl_j+\tilde N_sd.
\]
Since $l_jr-jd=1$, taking
\[
\lambda=l_s-\tilde M_sl_j+\tilde N_sd
\]
yields the desired equality.
\end{proof}

When $r=1$, the algebra is generated by $u$, $x$, $a$, and $b=a^{-1}$. Assume now that $r>1$. By the previous lemma, there exists $j$ such that $l_jr=jd+1$. Moreover, for every $s\ge1$, there exist integers $M_s,N_s$ such that $M_sl_j=N_sd+l_s$, and hence
\[
x_{l_{j}}^{M_{s}}=\left(\frac{\mathfrak{X}_{l_{j}}}{
u^{j}}\right)^{M_{s}}=\frac{\mathfrak{X}_{M_{s}l_{j}}}{
u^{M_{s}j}}=\frac{\mathfrak{X}_{N_{s}d+l_{s}}}{
u^{N_{s}r+s}}=\frac{\mathfrak{X}_{N_{s}d}}{
u^{N_{s}r}}\frac{\mathfrak{X}_{l_{s}}}{
u^{s}}=\left(\frac{\mathfrak{X}_{d}}{
u^{r}}\right)^{N_{s}}\frac{\mathfrak{X}_{l_{s}}}{
u^{s}}=a^{N_{s}}x_{l_{s}}.
\]
Since $a$ is invertible, it follows that $x_{l_s}=x_{l_j}^{M_s}b^{N_s}$. Furthermore,
\[
x_{l_j}^{r}a^{-j}=x,
\qquad
ua^{l_j}=x_{l_j}^{d}.
\]
Thus, the algebra is generated by $u$, $x_{l_j}$, $a$, and $b$, and is given by
\[
A[C_{0},\D_{0}]:=\frac{k[u,x_{l_j},a,b]}{(x_{l_{j}}^{d}-ua^{l_{j}},ab-1)}.
\]

Similarly,
\[
A[C_{\infty},\D_{\infty}]:=\frac{k[w,y_{l_j},a,b]}{(w-y_{l_{j}}^{d}a^{l_{j}},ab-1)}.
\]

 Let $C_{r}\subset X(\Sd_{r})$ denote the fiber of $\Phi_{\beta}$ over the base point. The curve $C_{r}$ is obtained by gluing
\[
U_0
\simeq
\Spec\!\left(
\frac{k[u,x_{l_j}]}
{(x^{l_j}-u)}
\right)
\simeq
\Spec\!\left(
k[x_{l_j}]
\right)
\]
and
\[
U_\infty
\simeq
\Spec\!\left(
\frac{k[w,y_{l_j}]}
{(y^{l_j}-w)}
\right)
\simeq
\Spec\!\left(
k[y_{l_j}]
\right)
\]
via the relations $x_{l_j}y_{l_j}=1$. 
Since $x_{l_j}$ and $y_{l_j}$ have degrees $l_j$ and $-l_j$, respectively, it follows that
$C_r\simeq \mathbb{P}^1$
endowed with the $G$-action
$t\cdot[z_0:z_1]=[t^{l_j}z_0:z_1]$.
Since $\mathbb{P}^1$ is smooth, it is $G$-normal.

\bibliographystyle{abbrv}
\bibliography{biblio}

\end{document}